\documentclass[a4paper,10pt]{article}
\usepackage[latin1]{inputenc}
\usepackage[T1]{fontenc}
\usepackage{lmodern}
\usepackage{a4wide}
\usepackage{empheq}
\usepackage{amssymb,amsthm}
\usepackage{enumerate,bbm}
\usepackage[a4paper]{hyperref}

\newcommand{\m}{\mbox}
\newcommand{\eps}{\varepsilon}
\newcommand{\px}{\partial_x}
\newcommand{\phib}{\varphi}
\renewcommand{\leq}{\leqslant}
\renewcommand{\geq}{\geqslant}

\newcommand{\vers}{\longrightarrow}

\newcommand{\cvf}{\rightharpoonup}

\newcommand{\lent}{[\kern-0.15em[}
\newcommand{\rent}{]\kern-0.15em]}
\newcommand{\ient}[2]{\lent #1,#2\rent}
\newcommand{\unn}{\ient{1}{N}}

\newcommand{\nh}[1]{{\left\|#1\right\|}_{H^1}}
\newcommand{\nlu}[1]{{\left\|#1\right\|}_{L^1}}
\newcommand{\nld}[1]{{\left\|#1\right\|}_{L^2}}
\newcommand{\nli}[1]{{\left\|#1\right\|}_{L^{\infty}}}
\newcommand{\nhs}[1]{{\left\|#1\right\|}_{H^s}}

\newcommand{\carre}[1]{{#1}^2}
\newcommand{\Carre}[1]{{\left( #1 \right)}^2}
\newcommand{\puiss}[2]{{#1}^{#2}}
\newcommand{\Puiss}[2]{{\left( #1 \right)}^{#2}}

\newcommand{\R}{\mathbb{R}}
\newcommand{\M}{\mathcal{M}}
\newcommand{\N}{\mathcal{N}}
\renewcommand{\L}{\mathcal{L}}
\renewcommand{\S}{\mathbb{S}}

\newcommand{\nht}[1]{{\left\|#1\right\|}_{H^{\frac{3}{4}}}}
\newcommand{\htq}{H^{\frac{3}{4}}}
\newcommand{\supt}{\sup_{t\in [0,T]}}

\renewcommand{\a}{\mathfrak{a}^-}
\renewcommand{\b}{\mathfrak{b}}
\renewcommand{\c}{\gamma}
\newcommand{\alpham}{\boldsymbol{\alpha}^-}
\newcommand{\alphav}{\boldsymbol{\alpha}}
\newcommand{\ta}{T(\a)}
\newcommand{\an}{A_1,\ldots,A_N}
\newcommand{\aj}{A_1,\ldots,A_j}
\newcommand{\dt}{\frac{d}{dt}}

\newcommand{\wt}[1]{\widetilde{#1}}
\newcommand{\zt}{\wt z}
\newcommand{\Rt}{\wt R}
\newcommand{\Zt}{\wt Z}
\newcommand{\epst}{\wt\eps}
\newcommand{\alphat}{\wt{\alpha}}
\newcommand{\alphatv}{\wt{\boldsymbol{\alpha}}}

\theoremstyle{plain}
\newtheorem{theo}{Theorem}[section]
\newtheorem{prop}[theo]{Proposition}
\newtheorem{lem}[theo]{Lemma}
\newtheorem{cor}[theo]{Corollary}
\newtheorem{claim}[theo]{Claim}
\theoremstyle{definition}
\newtheorem{defi}[theo]{Definition}
\newtheorem{rem}[theo]{Remark}
\newtheorem{notation}[theo]{Notation}

\numberwithin{equation}{section}

\title{Multi-soliton solutions for the supercritical gKdV equations}
\author{Vianney Combet}
\date{Universit\'e de Versailles Saint-Quentin-en-Yvelines,
 Math\'ematiques, UMR 8100, \\
 45, av. des \'Etats-Unis,
 78035 Versailles Cedex, France\\
 vianney.combet@math.uvsq.fr}

\begin{document}

\maketitle

\begin{abstract}
For the $L^2$ subcritical and critical \eqref{eq:gKdV} equations, Martel \cite{martel:Nsoliton} proved the existence and uniqueness of multi-solitons. Recall that for any $N$ given solitons, we call multi-soliton a solution of \eqref{eq:gKdV} which behaves as the sum of these $N$ solitons asymptotically as $t\to+\infty$. More recently, for the $L^2$ supercritical case, Côte, Martel and Merle \cite{martel:Nsolitons} proved the existence of at least one multi-soliton. In the present paper, as suggested by a previous work concerning the one soliton case \cite{combet}, we first construct an $N$-parameter family of multi-solitons for the supercritical \eqref{eq:gKdV} equation, for $N$ arbitrarily given solitons, and then prove that any multi-soliton belongs to this family. In other words, we obtain a complete classification of multi-solitons for \eqref{eq:gKdV}.
\end{abstract}

\section{Introduction}

\subsection{The generalized Korteweg-de Vries equation} \label{subsec:gKdV}

We consider the generalized Korteweg-de Vries equation: \begin{equation} \label{eq:gKdV} \tag{gKdV} \begin{cases} \partial_t u+\px^3 u +\px(u^p)=0\\ u(0)=u_0\in H^1(\R) \end{cases} \end{equation} where $(t,x)\in\R^2$ and $p\geq 2$ is integer. The following quantities are formally conserved for solutions of \eqref{eq:gKdV}: \begin{gather*} \int u^2(t) = \int u^2(0)\quad \m{(mass)},\\ E(u(t)) = \frac{1}{2}\int u_x^2(t) -\frac{1}{p+1}\int u^{p+1}(t) = E(u(0))\quad \m{(energy)}. \end{gather*}

Kenig, Ponce and Vega \cite{kpv} have shown that the local Cauchy problem for \eqref{eq:gKdV} is well posed in $H^1(\R)$: for $u_0\in H^1(\R)$, there exist $T>0$ and a solution $u\in C([0,T],H^1(\R))$ of \eqref{eq:gKdV} satisfying $u(0)=u_0$ which is unique in some class $Y_T\subset C([0,T],H^1(\R))$. Moreover, if $T^*\geq T$ is the maximal time of existence of $u$, then either $T^*=+\infty$ which means that $u(t)$ is a global solution, or $T^*<+\infty$ and then $\nh{u(t)} \to +\infty$ as $t\uparrow T^*$ ($u(t)$ is a finite time blow up solution). Throughout this paper, when referring to an $H^1$ solution of \eqref{eq:gKdV}, we mean a solution in the above sense. Finally, if $u_0\in H^s(\R)$ for some $s\geq 1$, then $u(t)\in H^s(\R)$ for all $t\in [0,T^*)$.

In the case where $2\leq p<5$, it is standard that all solutions in $H^1$ are global and uniformly bounded by the energy and mass conservations and the Gagliardo-Nirenberg inequality. In the case $p=5$, the existence of finite time blow up solutions was proved by Merle \cite{merle} and Martel and Merle \cite{martel:blowup}. Therefore $p=5$ is the critical exponent for the long time behavior of solutions of \eqref{eq:gKdV}. For $p>5$, the existence of blow up solutions is an open problem.

We recall that a fundamental property of \eqref{eq:gKdV} equations is the existence of a family of explicit traveling wave solutions. Let $Q$ be the only solution (up to translations) of \[ Q>0,\quad Q\in H^1(\R),\quad Q''+Q^p=Q,\quad \m{i.e.}\ Q(x)=\Puiss{\frac{p+1}{2\cosh^2\left(\frac{p-1}{2}x\right)}}{\frac{1}{p-1}}. \] For all $c_0>0$ and $x_0\in\R$, \[ R_{c_0,x_0}(t,x)=Q_{c_0}(x-c_0t-x_0) \] is a solution of \eqref{eq:gKdV}, where $Q_{c_0}(x)=c_0^{\frac{1}{p-1}}Q(\sqrt{c_0}x)$. We call solitons these solutions though they are known to be solitons only for $p=2,3$ (in the sense that they are stable by interaction).

It is well-known that the stability properties of a soliton solution depend on the sign of
${\frac{d}{dc} \int Q_c^2}_{|c=c_0}$. Since $\int Q_c^2 = c^{\frac{5-p}{2(p-1)}} \int Q^2$, we distinguish the following three cases:
\begin{itemize}
	\item For $p<5$ ($L^2$ subcritical case), solitons are stable and asymptotically stable in $H^1$ in some suitable sense: see Cazenave and Lions \cite{cazenavelions}, Weinstein \cite{weinstein:lyapunov}, Grillakis, Shatah and Strauss \cite{gss} for orbital stability; and Pego and Weinstein \cite{pegoweinsteinbis}, Martel and Merle \cite{martel:revisited} for asymptotic stability.
	\item For $p=5$ ($L^2$ critical case), solitons are unstable, and blow up occurs for a large class of solutions initially arbitrarily close to a soliton, see \cite{martel:blowup,merle}. Moreover, for both critical and subcritical cases, previous works imply the following asymptotic classification result: if $u$ is a solution of \eqref{eq:gKdV} such that $\lim_{t\to+\infty} \nh{u(t)-Q(\cdot-t)}=0$, then $u(t)=Q(\cdot-t)$ for $t$ large enough.
	\item For $p>5$ ($L^2$ supercritical case), solitons are unstable (see Grillakis, Shatah and Strauss \cite{gss} and Bona, Souganidis and Strauss \cite{bss}). In particular, the previous asymptotic classification result does not hold in this case. More precisely, we have:
\end{itemize}

\begin{theo}[\cite{combet}] \label{th:month}
Let $p>5$.
\begin{enumerate}[(i)]
\item There exists a one-parameter family ${(U^A)}_{A\in\R}$ of solutions of \eqref{eq:gKdV} such that, for all $A\in\R$, \[ \lim_{t\to+\infty} \nh{U^A(t,\cdot+t)-Q}=0, \] and if $A'\in\R$ satisfies $A'\neq A$, then $U^{A'}\neq U^A$.
\item Conversely, if $u$ is a solution of \eqref{eq:gKdV} such that $\lim_{t\to+\infty} \inf_{y\in\R} \nh{u(t)-Q(\cdot-y)}=0$, then there exist $A\in\R$, $t_0\in\R$ and $x_0\in\R$ such that $u(t)=U^A(t,\cdot-x_0)$ for $t\geq t_0$.
\end{enumerate}
\end{theo}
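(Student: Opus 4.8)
The plan is to pass to the frame travelling at speed $1$: writing $u(t,x)=v(t,x+t)$, equation \eqref{eq:gKdV} becomes $\partial_t v=-\px^3 v+\px v-\px(v^p)$, and linearising at $Q$ via $v=Q+\eps$ produces the operator $\px\mathcal{L}$ with $\mathcal{L}=-\px^2+1-pQ^{p-1}$. I would use the standard spectral picture, available precisely because $p>5$: $\mathcal{L}$ is self-adjoint on $L^2$ with a one-dimensional negative eigenspace and kernel $\R Q'$, satisfies $\mathcal{L}Q'=0$ and $\mathcal{L}(\Lambda Q)=-Q$ with $\Lambda Q=\partial_c Q_c|_{c=1}$, and $\px\mathcal{L}$ has a simple positive eigenvalue $e_0>0$ and a simple negative eigenvalue $-e_0$ with smooth exponentially decaying eigenfunctions $Y^\pm$, the remainder of its spectrum lying on $i\R$; dually, $\mathcal{L}\px$ has eigenfunctions $Z^\pm$ for $\pm e_0$, biorthogonal to the $Y^\mp$. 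The principle behind both assertions is that $\eps(t)\to0$ forces $\eps$ to be asymptotically carried by the single \emph{stable} mode $e^{-e_0 t}Y^-$, whose coefficient will be the free parameter $A$.

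For (i), for each $A\in\R$ I would first build an approximate solution as a finite expansion $V_k^A(t,x)=Q(x)+\sum_{j=1}^k(Ae^{-e_0 t})^j\phi_j(x)$ with $\phi_1=Y^-$ and $\phi_j$ ($j\geq2$) the Schwartz solution of $(\px\mathcal{L}+je_0)\phi_j=F_j(\phi_1,\dots,\phi_{j-1})$, the right-hand side gathering the nonlinear contributions of the previous terms; these elliptic problems are solvable because $-je_0\notin\mathrm{spec}(\px\mathcal{L})$ for $j\geq2$. By construction $V_k^A$ solves \eqref{eq:gKdV} in the moving frame with an error $O(e^{-(k+1)e_0 t})$ in every $H^s$. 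I would then fix an increasing sequence $S_n\to+\infty$, solve \eqref{eq:gKdV} backward from $t=S_n$ with data $V_k^A(S_n)$, call the solution $u_n$, and establish the uniform bound $\nh{u_n(t)-V_k^A(t)}\lesssim e^{-(k+\frac12)e_0 t}$ on $[T_0,S_n]$. This rests on a bootstrap driven by a coercive energy--virial functional for the linearised dynamics: the scaling and translation directions are neutralised by modulation (adjusting the speed and centre of the soliton so that $\eps\perp\Lambda Q$ and $\eps\perp\px Q$), while the exponential directions $(\eps,Z^\pm)$ are controlled separately, the unstable one through a one-dimensional topological (shooting) argument on the data. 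A compactness argument then yields $U^A$ as a weak limit of a subsequence of the $u_n$, satisfying $\nh{U^A(t,\cdot+t)-V_k^A(t)}\lesssim e^{-(k+\frac12)e_0 t}$, hence $\nh{U^A(t,\cdot+t)-Q}\to0$; and since with $k\geq2$ one has $U^A(t,\cdot+t)-Q=Ae^{-e_0 t}Y^-+O(e^{-2e_0 t})$, distinct values of $A$ give distinct solutions.

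For (ii), given $u$ with $\inf_{y\in\R}\nh{u(t)-Q(\cdot-y)}\to0$, I would modulate: $u(t,x)=Q_{c(t)}(x-\rho(t))+\eps(t,x)$ with $\eps\perp\Lambda Q_{c(t)}$ and $\eps\perp\px Q_{c(t)}$; conservation of mass together with the strict monotonicity of $c\mapsto\int Q_c^2$ (here $p\neq5$) forces $c(t)\to1$, $\nh{\eps(t)}\to0$, $\rho'(t)\to1$. The core step is to upgrade $\nh{\eps(t)}\to0$ to exponential decay: combining a localised virial identity with the almost-monotonicity of the mass to the right of the soliton and the coercivity of $\mathcal{L}$ away from the neutral directions $\Lambda Q$, $Q'$ and the exponential directions yields a Lyapunov-type differential inequality giving $\nh{\eps(t)}\lesssim e^{-\theta t}$ for some $\theta\in(0,e_0]$. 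Plugging this into the scalar ODEs $\dt(\eps,Z^\pm)=\pm e_0(\eps,Z^\pm)+O(\nh{\eps}^2)$ and integrating the unstable one backward from $+\infty$ (using $(\eps,Z^+)(t)\to0$) gives $(\eps,Z^+)(t)=O(e^{-2\theta t})$ and $(\eps,Z^-)(t)=Ae^{-e_0 t}+o(e^{-e_0 t})$ for a well-defined $A\in\R$; a further bootstrap pushes $\theta$ up to $e_0$, produces an expansion of $\eps$ to order $e^{-(1+\delta)e_0 t}$, and shows $\rho(t)-t\to x_0$ for some $x_0\in\R$. Finally I would compare $u$ with $U^A(\cdot-x_0)$: by (i) both have the same leading profile, so $w(t):=u(t)-U^A(t,\cdot-x_0)$ satisfies $\nh{w(t)}\lesssim e^{-(1+\delta)e_0 t}$, and a uniqueness argument — the same energy--virial functional applied to $w$, where the exponential smallness is now by itself strong enough to absorb the unstable direction, with no topological input — forces $w\equiv0$, i.e. $u(t)=U^A(t,\cdot-x_0)$ for $t$ large enough.

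The main obstacle is the quantitative step in (ii): turning $\nh{\eps(t)}\to0$ into the rate $\nh{\eps(t)}\lesssim e^{-e_0 t}$. Because \eqref{eq:gKdV} loses a derivative, one cannot simply run Gronwall on the linearised equation; one has to orchestrate a localised virial estimate (which only controls $L^2$-type quantities), the monotonicity of the mass on the right, and the spectral coercivity of $\mathcal{L}$, all while tracking the slow drift of $c(t)$ and $\rho(t)$ — and the coercivity holds only after removing both the neutral directions and the two exponential directions, so the decomposition of $\eps$ and the bookkeeping of the error terms must be done carefully. The closing uniqueness argument is of the same flavour but genuinely easier, since there the baseline decay is already exponential.
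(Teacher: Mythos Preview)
This theorem is not proved in the present paper: it is quoted from \cite{combet} as background, and the paper's contribution is the multi-soliton analogue (Theorem~\ref{th:main}). So there is no ``paper's own proof'' to compare against here. That said, the paper does comment on the method of \cite{combet} and develops closely related arguments for the multi-soliton case, so a comparison is still meaningful.

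Your sketch is broadly correct and close in spirit to \cite{combet}, with one methodological difference worth flagging. For part~(i) you build the formal profile $V_k^A=Q+\sum_{j\geq 1}(Ae^{-e_0 t})^j\phi_j$ exactly as in \cite{combet} (and \cite{duy}), but you then pass to the actual solution by a backward-in-time compactness argument with a topological shooting step. The paper explicitly notes (end of Section~1) that \cite{combet} instead closed this step by a \emph{fixed point} argument, which the high-order approximate profile makes possible and which dispenses with the Brouwer-type argument; the compactness/topological route you describe is precisely the alternative the present paper adopts for multi-solitons, where building an approximate profile is awkward. Either route works for $N=1$; your hybrid is fine, though slightly heavier than necessary.

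For part~(ii) your outline matches the structure of Section~\ref{sec:classification} here (specialised to $N=1$): modulation, an initial exponential rate via monotonicity of localised mass/energy plus coercivity of $L$, then a bootstrap up to rate $e_0$, identification of $A$ from the stable coefficient, and a final uniqueness step. Two small remarks. First, your sign/labelling convention for $Y^{\pm}$ is opposite to the paper's: here $\L Y^+=e_0Y^+$ with $\L=-\px L$, so the decaying mode along the flow is $e^{-e_0 t}Y^+$, not $Y^-$; this is purely notational but would cause confusion if you tried to match formulas. Second, the step ``$c(t)\to1$'' does not really need mass conservation and monotonicity of $c\mapsto\int Q_c^2$: it follows directly from the hypothesis $\inf_y\nh{u(t)-Q(\cdot-y)}\to0$ via the implicit function theorem used in the modulation. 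The genuine difficulty you single out --- extracting the first exponential rate from mere $o(1)$ decay --- is indeed the crux, and your description of the ingredients (localised monotonicity, coercivity after removing the $Q'$, $Z^{\pm}$ directions, control of the scalar ODEs for $(\eps,Z^{\pm})$) is accurate.
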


We recall that this result was an adaptation to \eqref{eq:gKdV} of previous works, concerning the nonlinear Schrödinger equation, of Duyckaerts and Merle \cite{duymerle} and Duyckaerts and Roudenko \cite{duy}. The purpose of this paper is to extend Theorem \ref{th:month} to multi-solitons.

\subsection{Multi-solitons}

Now, we focus on  multi-soliton solutions. Given $2N$ parameters defining $N\geq 2$ solitons with different speeds, \begin{equation} \label{parameters} 0<c_1<\cdots<c_N, \quad x_1,\ldots,x_N\in\R, \end{equation} we set \[ R_j(t) = R_{c_j,x_j}(t)\quad \m{and}\quad R(t)=\sum_{j=1}^N R_j(t), \] and we call multi-soliton a solution $u(t)$ of \eqref{eq:gKdV} such that \begin{equation} \label{problem} \nh{u(t)-R(t)} \vers 0 \quad \text{as} \quad t\to+\infty. \end{equation}

Let us recall known results on multi-solitons:
\begin{itemize}
\item For $p=2$ and $3$ (KdV and mKdV), multi-solitons (in a stronger sense) are well-known to exist for any set of parameters \eqref{parameters}, as a consequence of the inverse scattering method.
\item In the $L^2$ subcritical and critical cases, \emph{i.e.} for \eqref{eq:gKdV} with $p\leq 5$, Martel \cite{martel:Nsoliton} constructed multi-solitons for any set of parameters \eqref{parameters}. The proof in \cite{martel:Nsoliton} follows the strategy of Merle \cite{merle:kblowup} (compactness argument) and relies on monotonicity properties developed in \cite{martel:revisited} (see also \cite{martel:tsai}). Recall that Martel, Merle and Tsai \cite{martel:tsai} proved stability and asymptotic stability of a sum of $N$ solitons for large time for the subcritical case. A refined version of the stability result of \cite{martel:tsai} shows that, for a given set of parameters, there exists a \emph{unique} multi-soliton solution satisfying \eqref{problem}, see Theorem~1 in \cite{martel:Nsoliton}.
\item In the $L^2$ supercritical case, \emph{i.e.} in a situation where solitons are known to be unstable, Côte, Martel and Merle \cite{martel:Nsolitons} have recently proved the existence of at least \emph{one} multi-soliton solution for \eqref{eq:gKdV}:
\end{itemize}

\begin{theo}[\cite{martel:Nsolitons}] \label{th:cmm}
Let $p>5$ and $N\geq 2$. Let $0<c_1<\cdots<c_N$ and $x_1,\ldots,x_N \in \R$. There exist $T_0 \in\R$, $C,\sigma_0>0$, and a solution $\phib \in C([T_0,+\infty), H^1)$ of \eqref{eq:gKdV} such that \[ \forall t \in [T_0,+\infty), \quad \nh{\phib(t)-R(t)} \leq C e^{-\sigma_0^{3/2} t}. \]
\end{theo}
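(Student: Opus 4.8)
The strategy I would follow is the compactness method of Merle \cite{merle:kblowup}, as used by Martel \cite{martel:Nsoliton} in the subcritical case: construct the multi-soliton as a limit of solutions of \eqref{eq:gKdV} with well-chosen final data, using uniform backward-in-time estimates that survive the limit. Concretely, for a sequence of times $S_n\to+\infty$, let $u_n$ be the solution of \eqref{eq:gKdV} with final condition $u_n(S_n)=R(S_n)$. The heart of the proof is the following uniform estimate: there exist $T_0\in\R$ and $C,\sigma_0>0$, independent of $n$, such that $u_n$ is defined on $[T_0,S_n]$ and
\[
\forall t\in[T_0,S_n],\quad \nh{u_n(t)-R(t)}\leq Ce^{-\sigma_0^{3/2}t}.
\]
Granting this, one extracts a weak $H^1$ limit $\phib(T_0)$ of $u_n(T_0)$ along a subsequence; by local well-posedness (continuous dependence plus the compactness in the estimate), $u_n\to\phib$ on compact time intervals, $\phib$ solves \eqref{eq:gKdV}, and the bound passes to $\phib$, first on $[T_0,S_m]$ for each fixed $m$ and then on all of $[T_0,+\infty)$.

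The core of the work is therefore the uniform exponential estimate, which I would prove by a bootstrap/continuity argument on the backward time interval. Suppose on a maximal subinterval $[T_0,S_n]$ one has the a priori bound $\nh{u_n(t)-R(t)}\leq 2Ce^{-\sigma_0^{3/2}t}$; the goal is to improve the constant to $C$ on the same interval. Writing $u_n=R+\eps_n$, one first uses modulation theory: since the $c_j$ are distinct and the solitons are exponentially separated for $t$ large, one may adjust the translation and scaling parameters of each soliton so that $\eps_n(t)$ satisfies suitable orthogonality conditions with respect to the modulated solitons $\wt R_j$, with the modulation parameters staying exponentially close to $(c_j,c_jt+x_j)$. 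Then one introduces a Lyapunov-type functional — a localized combination of mass and energy adapted to the $N$ solitons, roughly $\mathcal{F}(t)=\sum_j\bigl(E(u_n;\,\text{near }\wt R_j)+\tfrac{c_j}{2}\,\text{mass}(u_n;\,\text{near }\wt R_j)\bigr)$ built with smooth cutoffs sliding at intermediate speeds $\tfrac{c_j+c_{j+1}}{2}$. The two ingredients needed are: (a) \emph{monotonicity}: using the Kato-type local smoothing identity for \eqref{eq:gKdV} together with the exponential decay of $Q$, the mass and energy localized to the right of each intermediate point are almost non-increasing backward in time, up to errors $O(e^{-\sigma_0^{3/2}t})$; (b) \emph{coercivity}: the quadratic form $\langle\mathcal{L}\eps_n,\eps_n\rangle$ coming from expanding $\mathcal{F}$ around $R$ is, under the orthogonality conditions and for well-separated solitons, bounded below by $c\,\nh{\eps_n(t)}^2$ minus lower-order terms — this is the subtle point in the supercritical regime because the linearized operator $\mathcal{L}=-\partial_x^2+1-pQ^{p-1}$ around a single soliton has a negative eigenvalue, so one must handle that unstable direction by either showing it contributes with a favorable sign in the backward estimate or absorbing it via an additional modulation parameter. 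Combining (a), (b), the cubic-and-higher control of nonlinear terms by $\nh{\eps_n}\lesssim e^{-\sigma_0^{3/2}t}$, and integrating the differential inequality for $\mathcal{F}$ backward from $S_n$ (where $\eps_n(S_n)=0$) yields $\nh{\eps_n(t)}^2\lesssim e^{-2\sigma_0^{3/2}t}$ with an improved constant, closing the bootstrap.

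The main obstacle, and what distinguishes this from Martel's subcritical construction, is precisely the coercivity step (b): because the solitons are linearly unstable for $p>5$, the natural energy-mass functional is not positive definite transversally, and one cannot directly conclude $\nh{\eps_n}\lesssim\mathcal{F}^{1/2}$. The resolution is to work \emph{backward} in time — the instability that would make a forward Cauchy problem uncontrollable is harmless here because we prescribe data at $t=S_n$ and the unstable mode is then a \emph{stable} mode for the reversed flow, so the monotonicity in (a) is compatible with the sign of the negative-eigenvalue contribution rather than fighting it. Making this precise requires a careful choice of the cutoff functions and of the parameter $\sigma_0$ (smaller than $\min_j c_j$ and than the gaps $c_{j+1}-c_j$, so that all soliton interactions and cutoff-derivative errors are genuinely exponentially small), and a somewhat delicate algebraic expansion to track the cross terms between distinct solitons in $\mathcal{F}$; these are technical but standard once the backward viewpoint is adopted.
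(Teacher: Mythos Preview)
There is a genuine gap in your handling of the instability. Your claim that ``the unstable mode is then a \emph{stable} mode for the reversed flow'' is only half the story. The linearized operator $\L$ around each soliton has \emph{two} real eigenvalues $\pm e_0$ (Corollary~\ref{th:spectrumc}): the direction $Y^-$ is unstable forward in time and indeed becomes stable backward, but the direction $Y^+$ is stable forward and becomes \emph{unstable} backward. With the naive final data $u_n(S_n)=R(S_n)$, the interaction and nonlinear source terms will generically excite the $Y_k^+$ components, and those grow exponentially as you integrate from $S_n$ down to $T_0$. So your bootstrap cannot close: there is no mechanism in your outline---neither the sign argument nor an extra scalar modulation parameter---that kills these $N$ backward-unstable directions.

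The actual proof in \cite{martel:Nsolitons}, and the version reproduced in this paper for Proposition~\ref{th:princbis}, resolves this by modifying the final data: one takes
\[
u_n(S_n)=R(S_n)+\sum_{k=1}^N b_{n,k}\,Y_k^+(S_n)
\]
with free parameters $\b_n\in\R^N$, and then uses a \emph{topological argument} (a Brouwer-type obstruction, see the map $\M$ in \S\ref{sec:preuveprinc}) to show that some choice of $\b_n$ in a ball of radius $O(e^{-\sigma_0^{3/2}S_n})$ makes the backward-unstable components $\alpha_k^-$ stay in their target ball all the way down to $T_0$. The paper states this explicitly just after Theorem~\ref{th:cmm}: ``the proof of Theorem~\ref{th:cmm} relies on an additional topological argument to control the unstable nature of the solitons.'' The coercivity you invoke (Lemma~\ref{th:Zc}(vi)) requires orthogonality to $Z_k^+$, $Z_k^-$ \emph{and} $Q'_{c_k}$; modulation handles only the last of these, while the first two are dealt with by the direct backward estimate on $\alpha_k^+$ together with the topological choice of $\b_n$ for $\alpha_k^-$. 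Your monotonicity and energy framework is otherwise correct, but without this shooting/Brouwer ingredient the argument does not go through.
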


Recall that, with respect to \cite{martel:Nsoliton,martel:tsai}, the proof of Theorem \ref{th:cmm} relies on an additional topological argument to control the unstable nature of the solitons. Moreover, note that no uniqueness result is proved in \cite{martel:Nsolitons}, contrary to the subcritical and critical cases \cite{martel:Nsoliton}. In fact, the objective of this paper is to prove uniqueness up to $N$ parameters, as suggested by Theorem \ref{th:month}.

\subsection{Main result and outline of the paper}

The whole paper is devoted to prove the following theorem of existence and uniqueness of a family of multi-solitons for the supercritical \eqref{eq:gKdV} equation.

\begin{theo} \label{th:main}
Let $p>5$, $N\geq 2$, $0<c_1<\cdots<c_N$ and $x_1,\ldots,x_N \in \R$. Denote $\displaystyle R=\sum_{j=1}^N R_{c_j,x_j}$.
\begin{enumerate}
\item There exists an $N$-parameter family ${(\phib_{\an})}_{(\an)\in\R^N}$ of solutions of \eqref{eq:gKdV} such that, for all $(\an)\in\R^N$, \[ \lim_{t\to +\infty} \nh{\phib_{\an}(t)-R(t)}=0, \] and if $(A'_1,\ldots,A'_N)\neq (\an)$, then $\phib_{A'_1,\ldots,A'_N} \neq \phib_{\an}$.
\item Conversely, if $u$ is a solution of \eqref{eq:gKdV} such that $\lim_{t\to+\infty} \nh{u(t)-R(t)}=0$, then there exists $(\an)\in\R^N$ such that $u=\phib_{\an}$.
\end{enumerate}
\end{theo}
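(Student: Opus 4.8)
The plan is to mirror the one-soliton analysis of Theorem~\ref{th:month}, replacing the single soliton $Q(\cdot-t)$ by the sum $R(t)$ and the scalar unstable direction by an $N$-dimensional one. The linearized operator around the soliton $R_{c_j,x_j}$ is $\mathcal{L}_j = -\partial_x^2 + c_j - p Q_{c_j}^{p-1}(\cdot - c_jt - x_j)$, and in the supercritical regime $p>5$ each $\px\mathcal{L}_j$ has exactly one unstable eigenvalue $e_j = \sigma_j^{3/2} > 0$ with a localized, exponentially decaying eigenfunction $Y_j^+$ (and a stable one $Y_j^-$, the reflection); this is exactly the structure exploited in \cite{combet} for $N=1$ and in \cite{martel:Nsolitons} for the topological construction. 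Since the speeds are ordered and distinct, the solitons separate linearly in time, so the unstable directions are asymptotically decoupled and contribute an $N$-dimensional family of "directions of instability" parametrized by $(A_1,\dots,A_N)\in\R^N$.

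For part~(1), the plan is a fixed-point / compactness construction in a suitable exponentially weighted space around $R(t)$. Starting from the solution $\phib$ of Theorem~\ref{th:cmm}, I would seek $\phib_{\an}(t) = \phib(t) + h(t)$ where $h$ solves the linearized-plus-nonlinear equation and is prescribed, at leading order, to behave like $\sum_{j=1}^N A_j e^{-e_j t}\, Y_j^+(\cdot - c_j t - x_j)$ as $t\to+\infty$. Concretely: (i) derive the equation for $\eps(t) = u(t)-R(t)$, decompose it using modulation parameters to kill the two non-positive (kernel and scaling) directions of each $\mathcal{L}_j$, leaving the "stable + unstable spectral" part; (ii) in the exponentially weighted norm $\supt e^{\theta t}\nh{\cdot}$ for suitable $\theta>0$, set up a contraction whose linear part has the unstable modes as free parameters $A_j$ and whose remaining modes decay; (iii) solve by a Banach fixed point on a large but finite time interval $[T_0,+\infty)$, using the exponential localization of the $Y_j^+$ and the linear separation of the soliton centers to control interaction terms (which are $O(e^{-\sigma_0^{3/2} t})$ by the same computations as in \cite{martel:Nsoliton,martel:tsai}); (iv) extend each $\phib_{\an}$ to a global-in-forward-time $H^1$ solution by the local Cauchy theory of \cite{kpv}. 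The injectivity $(A'_1,\dots,A'_N)\ne(\an)\Rightarrow \phib_{A'_1,\dots,A'_N}\ne\phib_{\an}$ follows because the leading-order asymptotics $e^{e_j t}\langle \phib_{\an}(t)-R(t), Y_j^+(\cdot-c_jt-x_j)\rangle \to A_j$ recover the parameters.

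For part~(2), the plan is a rigidity argument. Given any $u$ with $\nh{u(t)-R(t)}\to 0$, write $\eps = u-R$, perform the same modulation, and derive a system of differential inequalities for the scalar quantities $a_j^\pm(t) = \langle \eps(t), Y_j^\mp(\cdot - c_jt - x_j)\rangle$ together with $\nh{\eps(t)}$. The stable modes and the coercive part satisfy a Lyapunov/monotonicity estimate (as in \cite{martel:tsai,martel:Nsoliton}, using the almost-conservation of a localized mass-energy functional $\mathcal{F}$ adapted to the $N$ solitons and the linear separation of speeds to split it into $N$ single-soliton pieces), forcing $a_j^-(t)$ and the "neutral/coercive" part to decay superexponentially; the unstable modes $a_j^+(t)$ satisfy $|\tfrac{d}{dt}a_j^+ - e_j a_j^+| \lesssim$ (small), so each $a_j^+(t) = A_j e^{e_j t} + o(e^{e_j t})$ for a unique $A_j\in\R$ (the only way $a_j^+$ stays bounded is $A_j$ finite). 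Then $u$ and $\phib_{\an}$ (from part~(1) with these $A_j$, up to the translation/time-shift parameters $t_0,x_0$ absorbed as in Theorem~\ref{th:month}(ii)) have the same asymptotic data, and a uniqueness statement for the Cauchy problem in the weighted space — proved again by a contraction estimate on $u - \phib_{\an}$ in $\supt e^{\theta t}\nh{\cdot}$ — gives $u\equiv\phib_{\an}$.

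The main obstacle I anticipate is the interplay between the $N$ unstable directions and the modulation/monotonicity machinery: unlike the subcritical case of \cite{martel:Nsoliton,martel:tsai}, the localized functional $\mathcal{F}$ is not coercive (each soliton contributes a negative direction), so one cannot directly run the compactness argument. The fix, as in \cite{combet,martel:Nsolitons}, is to work modulo the finite-dimensional unstable subspace spanned by the $Y_j^+(\cdot-c_jt-x_j)$ — proving coercivity of $\mathcal{F}$ on the orthogonal complement of the $3N$ bad directions — and to handle the unstable modes separately by the ODE analysis above; making the two analyses compatible (same modulation, same weights, uniform-in-$N$ constants, and controlling the off-diagonal interaction terms between distinct solitons, which decay like $e^{-\sigma_0^{3/2}t}$ thanks to \eqref{parameters}) is the technical heart of both directions of the theorem.
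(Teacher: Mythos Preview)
Your plan captures the broad architecture --- exploit the $N$ exponential directions, coercivity on their complement, localized monotonicity --- but it misses structural points on which the paper's proof hinges.

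For part (1) you propose to build $\phib_{\an}$ in one shot by a Banach fixed point around $\phib + \sum_j A_j e^{-e_jt}Y_j^+$. The paper argues this is not workable: because $e_1<\cdots<e_N$, a perturbation at scale $e^{-e_j t}$ is invisible next to one at scale $e^{-e_{j-1}t}$, and there is no single weighted norm in which a contraction closes for all $N$ parameters simultaneously (nor any simple way to compare $\phib_{\an}$ directly with $\phib$). Instead the paper constructs $\phib_{A_1}$ from $\phib$, then $\phib_{A_1,A_2}$ from $\phib_{A_1}$, and so on, each step (Proposition~\ref{th:princ}) perturbing an \emph{exact} multi-soliton at a \emph{single} scale $e^{-e_j t}$; the point is that using $\phib_{A_1,\ldots,A_{j-1}}$ as the new reference kills all error terms from the earlier parameters. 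Moreover, each step is carried out not by a contraction but by solving backward from time $S_n\to\infty$, choosing the $N-j$ components of the final data along the remaining unstable directions $Y_k^+$ ($k>j$) via a Brouwer-type topological argument, and passing to a weak limit. The paper explicitly abandons the fixed-point/approximate-solution route of \cite{combet} because soliton interactions make the construction of approximate profiles unnatural here.

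For part (2) your identification of the $A_j$ is inverted. If $a_j^+$ satisfies $\tfrac{d}{dt}a_j^+\approx e_j a_j^+$ then $a_j^+\sim A_j e^{e_j t}$ \emph{grows}, and $\eps\to 0$ forces $A_j=0$; no parameter is recovered from the growing mode. The free parameters come from the \emph{decaying} modes: one proves $e^{e_j t}\alpha_{j-1,j}^-(t)\to A_j$ (see \eqref{eq:A1} and Proposition~\ref{th:rec}). This identification is again iterative in $j$: after extracting $A_1$ one replaces $\phib$ by $\phib_{A_1}$, bootstraps the decay of $u-\phib_{A_1}$ from $e^{-e_1 t}$ up to $e^{-e_2 t}$, extracts $A_2$, etc. You also omit the essential preliminary step (Section~\ref{sec:expoc}): the hypothesis gives only $\nh{u(t)-R(t)}\to 0$ with \emph{no rate}, and one must first upgrade this to exponential decay at some small rate $\c$ using gKdV-specific localized mass/energy monotonicity --- a step with no known NLS analogue (cf.~Remark~\ref{rem:NLS}). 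Finally, there are no translation or time-shift parameters to absorb: the $c_j,x_j$ are fixed, so the conclusion is $u=\phib_{\an}$ exactly, not up to symmetries.
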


\begin{rem}
The convergence of $\phib_{\an}$ to $R$ in Theorem \ref{th:main} is actually exponential in time, as in Theorem \ref{th:cmm}. See the proof of Theorem \ref{th:main} at the beginning of Section \ref{sec:construction} for more details.
\end{rem}

\begin{rem} \label{rem:NLS}
For the nonlinear Schrödinger equation, the question of the classification of multi-solitons as in Theorem \ref{th:main} is open. In fact, even for subcritical and critical cases, no general uniqueness result has been proved yet (see general existence results in \cite{merle:kblowup,perelman,rss,martel:NLS,martel:Nsolitons}).
\end{rem}

The paper is organized as follows. In the next section, we briefly recall some well-known results on solitons, multi-solitons, and on the linearized equation. One of the most important facts about the linearized equation, also strongly used in \cite{martel:Nsolitons,combet}, is the determination by Pego and Weinstein \cite{pegoweinstein} of the spectrum of the linearized operator $\L$ around the soliton $Q(x-t)$: $\sigma(\L)\cap\R = \{-e_0,0,+e_0\}$ with $e_0>0$, and moreover $e_0$ and $-e_0$ are simple eigenvalues of $\L$ with eigenfunctions $Y^+$ and $Y^-$. Indeed, $Y^{\pm}$ allow to control the negative directions of the linearized energy around a soliton (see Lemma \ref{th:Zc}). Moreover, by a simple scaling argument, we determine eigenvalues of the linearized operator around $Q_{c_j}$: $\pm e_j=\pm c_j^{3/2}e_0$ are eigenvalues with eigenfunctions $Y_j^{\pm}$ (see Notation \ref{Rj} for precise definitions).

In Section \ref{sec:construction}, we construct the family $(\phib_{\an})$ described in Theorem \ref{th:main}. To do this, we first claim Proposition \ref{th:princ}, which is the new key point of the proof of the multi-existence result, and can be summarized as follows. \textit{Let $\phib$ be a multi-soliton given by Theorem \ref{th:cmm}, $j\in\unn$ and $A_j\in\R$. Then there exists a solution $u(t)$ of \eqref{eq:gKdV} such that \[ \nh{u(t)-\phib(t)-A_je^{-e_jt}Y_j^+(t)} \leq e^{-(e_j+\c)t}, \] for $t$ large and for some small $\c>0$.} This means that, similarly as in \cite{combet} for one soliton, we can perturb the multi-soliton $\phib$ locally around \emph{one} given soliton at the order $e^{-e_jt}$. Since $e_1<\cdots<e_N$, $\phib_{\an}$ has to be constructed by iteration, from $j=1$ to $j=N$. Indeed, it is not significant to perturb $\phib$ at order $e_j$ before order $e_{j-1}$, since $e_j>e_{j-1}+\c$. Similarly, it seems that there exists no simple way to compare $\phib_{\an}$ to $\phib$. Finally, to prove Proposition \ref{th:princ}, we rely on refinements of arguments developed in \cite{martel:Nsolitons}, in particular the topological argument to control the unstable directions.

In Section \ref{sec:classification}, we classify all multi-solitons in terms of the family previously constructed. Once again, it appears that the identification of the solution has to be done step by step (after an improvement of the convergence rate, as in \cite{combet}), from order $e_1$ to order $e_N$. In this section, we strongly use special monotonicity properties of \eqref{eq:gKdV}, in particular to prove that any multi-soliton converges exponentially (Section \ref{sec:expoc}). Such arguments are not known for the nonlinear Schrödinger equations.

Finally, recall that in the one soliton case for \eqref{eq:gKdV} \cite{combet}, a construction of a family of approximate solutions of the linearized equation and fixed point arguments were used (among other things), as in the one soliton case for the nonlinear Schrödinger equation \cite{duy}. For multi-solitons, since the construction of approximate solutions is not natural (because of the interactions between solitons), we propose in this paper an alternate approach based only on compactness and energy methods.

\section{Preliminary results}

\subsection{Notation and first properties of the solitons}

\begin{notation}
They are available in the whole paper.
\begin{enumerate}[(a)]
\item $(\cdot,\cdot)$ denotes the $L^2(\R)$ scalar product.
\item The Sobolev space $H^s$ is defined by $H^s(\R) = \{ u\in \mathcal{D}'(\R)\ |\ {(1+\xi^2)}^{s/2}\hat{u}(\xi) \in L^2(\R) \}$, and in particular $H^1(\R) = \{ u\in L^2(\R)\ |\ \nh{u}^2 = \nld{u}^2 +\nld{u'}^2 <+\infty \} \hookrightarrow L^{\infty}(\R)$.
\item We denote $\px v = v_x$ the partial derivative of $v$ with respect to $x$.
\item All numbers $C,K$ appearing in inequalities are real constants (with respect to the context) strictly positive, which may change in each step of an inequality.
\end{enumerate}
\end{notation}

\begin{claim} \label{th:solitons}
For all $c>0$, one has:
\begin{enumerate}[(i)]
\item $Q_c>0$, $Q_c$ is even, $Q_c$ is $C^{\infty}$, and $Q'_c(x)<0$ for all $x>0$.
\item For all $j\geq 0$, there exists $C_j>0$ such that $Q_c^{(j)}(x) \sim C_je^{-\sqrt{c}|x|}$ as $|x|\to +\infty$.\\ In particular, for all $j\geq 0$, there exists $C'_j>0$ such that $|Q_c^{(j)}(x)|\leq C'_j e^{-\sqrt{c}|x|}$ for all $x\in\R$.
\item $Q''_c+Q_c^p = cQ_c$.
\end{enumerate}
\end{claim}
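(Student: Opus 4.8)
The plan is to derive everything from the explicit formula for $Q$ together with the scaling relation $Q_c(x)=c^{1/(p-1)}Q(\sqrt c\,x)$. First, for $c=1$: the function $Q(x)=\bigl(\frac{p+1}{2}\bigr)^{1/(p-1)}\cosh^{-2/(p-1)}\bigl(\frac{p-1}{2}x\bigr)$ is manifestly positive, even, and $C^\infty$ on $\R$ (since $\cosh$ is $C^\infty$, strictly positive, and even). Differentiating, $Q'(x)$ carries a factor $-\tanh\bigl(\frac{p-1}{2}x\bigr)$ times positive quantities, so $Q'(x)<0$ for $x>0$; this gives (i) for $c=1$. The ODE $Q''+Q^p=Q$ is quoted in the text as characterizing $Q$, so (iii) for $c=1$ holds by definition. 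Then I would transfer each property to general $c>0$ via the scaling: positivity, evenness and smoothness are preserved under $x\mapsto\sqrt c\,x$ and multiplication by a positive constant; $Q_c'(x)=c^{1/(p-1)+1/2}Q'(\sqrt c\,x)$ has the sign of $Q'(\sqrt c\,x)$, which is negative for $x>0$; and plugging $Q_c$ into $Q_c''+Q_c^p$ and using $Q''+Q^p=Q$ at the point $\sqrt c\,x$ yields $Q_c''+Q_c^p=cQ_c$ after collecting the powers of $c$ (one checks $\tfrac1{p-1}+1=\tfrac{p}{p-1}$ so that the nonlinear term scales correctly, and $\tfrac1{p-1}+1$ again for the second derivative term together with the extra factor $c$ from the chain rule). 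This is (iii) in general.

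For (ii), the asymptotics follow from the exponential decay of $\cosh^{-1}$. As $|x|\to+\infty$, $\cosh\bigl(\frac{p-1}{2}x\bigr)\sim\frac12 e^{\frac{p-1}{2}|x|}$, so $Q(x)\sim\bigl(\frac{p+1}{2}\bigr)^{1/(p-1)}2^{2/(p-1)}e^{-|x|}$, i.e.\ $Q(x)\sim C_0 e^{-|x|}$. For the derivatives, I would argue that each $Q^{(j)}$ is a finite linear combination (with coefficients that are polynomials in $\tanh$, hence bounded) of terms of the form $\cosh^{-2/(p-1)}\bigl(\frac{p-1}{2}x\bigr)$ times bounded factors; more cleanly, one can use the ODE: $Q''=Q-Q^p$, and inductively every $Q^{(j)}$ is a polynomial in $Q$ and $Q'$ with $Q'=\pm\sqrt{Q^2-\frac2{p+1}Q^{p+1}}$ (from multiplying the ODE by $Q'$ and integrating, using $Q,Q'\to0$), so each $Q^{(j)}(x)$ is asymptotic to a constant multiple of $Q(x)\sim C_0e^{-|x|}$ — here one checks the leading term does not cancel, e.g.\ $Q'(x)\sim -\mathrm{sgn}(x)\,C_0 e^{-|x|}$, $Q''(x)\sim C_0 e^{-|x|}$, and in general $Q^{(j)}(x)\sim(-\mathrm{sgn}(x))^j C_0 e^{-|x|}$. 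Then scaling gives $Q_c^{(j)}(x)=c^{1/(p-1)+j/2}Q^{(j)}(\sqrt c\,x)\sim C_j e^{-\sqrt c\,|x|}$ with $C_j=c^{1/(p-1)+j/2}(-\mathrm{sgn}(x))^jC_0$ — more precisely $|C_j|$ depending only on $c,j,p$, up to the sign which is harmless for the stated $\sim$. The pointwise bound $|Q_c^{(j)}(x)|\le C_j'e^{-\sqrt c\,|x|}$ then follows from continuity of $Q_c^{(j)}$ on $\R$ together with the asymptotics: $e^{\sqrt c\,|x|}|Q_c^{(j)}(x)|$ is continuous and has a finite limit at $\pm\infty$, hence is bounded on $\R$.

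There is no real obstacle here: this is a routine verification from an explicit formula, and the only mild care needed is in (ii), namely confirming that the leading-order coefficient in the asymptotic expansion of each $Q^{(j)}$ is nonzero (so that the $\sim$ relation, not merely an $O$ bound, holds). This is ensured by the representation $Q^{(j)}=P_j(Q,Q')$ for explicit polynomials $P_j$ with $P_j(0,0)=0$ and leading linear part $(-1)^{\lceil j/2\rceil}$ times $Q$ or $Q'$, together with the fact that $Q'/Q\to\mp1$ as $x\to\pm\infty$, which one reads off from $Q'=\pm Q\sqrt{1-\frac2{p+1}Q^{p-1}}$ and $Q\to0$. All constants depend only on $p$, $c$ and $j$, as claimed.
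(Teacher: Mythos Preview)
Your approach is correct and is the standard direct verification; the paper itself states this claim without proof, treating it as a routine consequence of the explicit formula for $Q$ and the scaling $Q_c(x)=c^{1/(p-1)}Q(\sqrt c\,x)$. One small slip: in your inductive description of the linear part of $P_j(Q,Q')$, the coefficient is $+1$ (not $(-1)^{\lceil j/2\rceil}$), since $Q''=Q-Q^p$ gives $Q^{(2k)}=Q+\text{(higher order)}$ and $Q^{(2k+1)}=Q'+\text{(higher order)}$; the sign $(-\mathrm{sgn}(x))^j$ in the asymptotics then comes entirely from $Q'/Q\to -\mathrm{sgn}(x)$, which you already note.
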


\subsection{Linearized equation}

Let $c>0$.

\subsubsection{Linearized operator around $Q_c$}

The linearized equation appears if one considers a solution of \eqref{eq:gKdV} close to the soliton $Q_c(x-ct)$. More precisely, if $u_c(t,x)=Q_c(x-ct)+h_c(t,x-ct)$ satisfies \eqref{eq:gKdV}, then $h_c$ satisfies \[ \partial_t h_c+\L_c h_c=O(h_c^2) \] where \[ \L_c a=-\px(L_ca) \quad \m{and}\quad L_ca=-\px^2 a+ca-pQ_c^{p-1}a. \] The spectrum of $\L_c$ has been calculated by Pego and Weinstein for $c=1$ in \cite{pegoweinstein}. Their results are summed up in the following proposition for the reader's convenience.

\begin{prop}[\cite{pegoweinstein}]
Let $\sigma(\L)$ be the spectrum of the operator $\L$ defined on $L^2(\R)$ and let $\sigma_{\mathrm{ess}}(\L)$ be its essential spectrum. Then \[\sigma_{\mathrm{ess}}(\L)=i\R \quad \m{and}\quad \sigma(\L)\cap\R = \{-e_0,0,e_0\} \m{ with } e_0>0. \] Furthermore, $e_0$ and $-e_0$ are simple eigenvalues of $\L$ with eigenfunctions $Y^+$ and $Y^- = \check{Y}^+$ which have an exponential decay at infinity, and the null space of $\L$ is spanned by $Q'$.
\end{prop}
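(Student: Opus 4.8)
\emph{Proof idea.} Write $\L=-\px L$ with $L=-\px^2+1-pQ^{p-1}$, a self-adjoint Schr\"odinger operator on $L^2(\R)$ whose potential $-pQ^{p-1}$ decays exponentially by Claim~\ref{th:solitons}. I would establish the three assertions in turn: the essential spectrum by a relative-compactness argument, the null space by the explicit structure of $L$, and the isolated real eigenvalues $\pm e_0$ by an Evans function.

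For the essential spectrum, compare $\L$ with the constant-coefficient operator $\L_\infty:=\px^3-\px$ obtained by deleting the potential; then $\L-\L_\infty=\px(pQ^{p-1}\,\cdot\,)$ has an exponentially decaying coefficient, hence is $\L_\infty$-compact, and the invariance of the essential spectrum under relatively compact perturbations gives $\sigma_{\mathrm{ess}}(\L)=\sigma_{\mathrm{ess}}(\L_\infty)$. Since $\L_\infty$ is unitarily equivalent, via the Fourier transform, to multiplication by $-i\xi(1+\xi^2)$, whose range is all of $i\R$, we get $\sigma_{\mathrm{ess}}(\L)=i\R$; in particular every real point of $\sigma(\L)$ other than $0$ is an isolated eigenvalue of finite multiplicity. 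For the null space, $\L a=0$ is equivalent to $La$ being constant, and for $a$ in the domain (so $a,a''\in L^2$) this forces $La=0$: writing $La\equiv\mu$, one has $a''=a-pQ^{p-1}a-\mu\to-\mu$ at infinity, which together with $a\in L^2$ forces $\mu=0$. Differentiating the soliton equation $Q''+Q^p=Q$ gives $LQ'=0$, and since $Q'$ has exactly one sign change, Sturm oscillation theory shows $0$ is a simple eigenvalue of $L$; hence $\ker\L=\ker L=\mathrm{span}\{Q'\}$, with $Q'$ exponentially decaying by Claim~\ref{th:solitons}.

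The core is the pair $\pm e_0$. Writing $a^*(x):=a(-x)$, the evenness of the potential gives $L(a^*)=(La)^*$ and $\px(b^*)=-(\px b)^*$, hence $\L(a^*)=-(\L a)^*$; thus $\lambda$ is an eigenvalue with eigenfunction $a$ iff $-\lambda$ is one with eigenfunction $a^*$, so it suffices to study $\{\Re\lambda>0\}$ and then set $Y^-=(Y^+)^*=\check Y^+$. On $\{\Re\lambda>0\}$ the equation $(La)'=-\lambda a$ is a first-order $3\times3$ linear system with exponential dichotomies at $+\infty$ and $-\infty$, and one forms the Evans function $D(\lambda)$, a normalized determinant that vanishes exactly when the solution decaying at $+\infty$ agrees with one decaying at $-\infty$; $D$ is analytic on $\{\Re\lambda>0\}$, extends continuously to $i\R\setminus\{0\}$, and $D(\lambda)\to1$ as $\Re\lambda\to+\infty$. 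Following Pego--Weinstein, the key facts are: (a) $D$ continues analytically across $\lambda=0$ despite $0\in\sigma_{\mathrm{ess}}(\L)$; (b) $\L Q'=0$ and $\L(\Lambda Q)=Q'$, where $\Lambda Q=\partial_c Q_c\big|_{c=1}$ satisfies $L(\Lambda Q)=-Q$, force $D$ to vanish at $0$ to order equal to the algebraic multiplicity of $0$, namely $2$, with first nonzero Taylor coefficient of a sign determined by $\frac{d}{dc}\int Q_c^2\big|_{c=1}$, which is $<0$ for $p>5$ since $\int Q_c^2=c^{\frac{5-p}{2(p-1)}}\int Q^2$; (c) combining the sign of that coefficient with $D$ real on $\R$ and $D\to1$ at $+\infty$, an argument-principle count shows $D$ has exactly one zero in $(0,+\infty)$, simple, and none elsewhere on $\R\setminus\{0\}$. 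Hence $\sigma(\L)\cap\R=\{-e_0,0,e_0\}$ with $e_0>0$, $\pm e_0$ are simple, and $Y^\pm$ decay exponentially because they span the one-dimensional stable and unstable subspaces of the dichotomous system.

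The main obstacle is precisely step (c) together with the analytic continuation in (a): constructing $D$ and continuing it through the essential spectrum at the origin, relating its order of vanishing there to the scaling derivative of the mass, and counting its zeros in the right half-plane. This is the content of \cite{pegoweinstein}, which I would follow; the remaining items — essential spectrum, null space, reflection symmetry, exponential decay of eigenfunctions — are routine once those inputs are in place.
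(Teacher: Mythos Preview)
The paper does not supply a proof of this proposition: it is stated as a summary of results from \cite{pegoweinstein}, introduced with ``Their results are summed up in the following proposition for the reader's convenience,'' and no argument is given. So there is no ``paper's own proof'' to compare your attempt against.

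Your sketch is a faithful outline of the Pego--Weinstein argument itself, and you say as much in your final paragraph. The pieces you label as routine (essential spectrum via relative compactness of the exponentially decaying perturbation, $\ker\L=\ker L=\mathrm{span}\{Q'\}$ via Sturm theory, the reflection symmetry $\L(a^*)=-(\L a)^*$ pairing $\pm e_0$, exponential decay of $Y^\pm$ from the dichotomy structure) are correctly identified and correctly handled. The substantive content---constructing the Evans function $D(\lambda)$, continuing it analytically through $\lambda=0$, relating its order of vanishing there to $\frac{d}{dc}\!\int Q_c^2\big|_{c=1}<0$ for $p>5$, and counting real zeros to conclude there is exactly one simple positive real eigenvalue---you correctly flag as the hard part and defer to \cite{pegoweinstein}. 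That is exactly what the paper does, only more tersely. In short: your proposal is appropriate, and it matches the paper's approach of citing \cite{pegoweinstein} rather than reproving the result.
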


This result is extended to $\L_c$ in Corollary~\ref{th:spectrumc} by a simple scaling argument. Indeed, we recall that if $u$ is a solution of \eqref{eq:gKdV}, then for all $\lambda>0$, $u_{\lambda}(t,x)=\lambda^{\frac{2}{p-1}}u(\lambda^3 t,\lambda x)$ is also a solution. Moreover, we have $Q_c(x)=c^{\frac{1}{p-1}}Q(\sqrt cx)$.

\begin{cor} \label{th:spectrumc}
Let $\sigma(\L_c)$ be the spectrum of the operator $\L_c$ defined on $L^2(\R)$ and let $\sigma_{\mathrm{ess}}(\L_c)$ be its essential spectrum. Then \[\sigma_{\mathrm{ess}}(\L_c)=i\R \quad \m{and}\quad \sigma(\L_c)\cap\R = \{-e_c,0,e_c\}\ \m{ where }\ e_c = c^{3/2}e_0>0. \] Furthermore, $e_c$ and $-e_c$ are simple eigenvalues of $\L_c$ with eigenfunctions $Y_c^+$ and $Y_c^- = \check{Y_c^+}$, where \[ Y^{\pm}_c(x)=c^{-1/2} Y^{\pm}(\sqrt cx), \] and the null space of $\L_c$ is spanned by $Q'_c$.
\end{cor}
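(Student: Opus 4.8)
The plan is to transfer the Pego--Weinstein spectral result for $\L = \L_1$ to $\L_c$ by conjugating with the scaling that maps $Q$ to $Q_c$. First I would record the algebraic identity linking the two operators. Writing $(D_\lambda a)(x) = a(\lambda x)$ for the dilation and using $Q_c(x) = c^{1/p-1}Q(\sqrt c\,x)$ together with Claim \ref{th:solitons}(iii), a direct computation gives $L_c(D_{\sqrt c} a) = c\, D_{\sqrt c}(L a)$ after substituting $Q_c^{p-1}(x) = c\, Q^{p-1}(\sqrt c x)$ (here $L = L_1$), and then applying $-\px$ and using $\px(D_{\sqrt c}a) = \sqrt c\, D_{\sqrt c}(\px a)$ yields
\[
\L_c \bigl(D_{\sqrt c} a\bigr) = c^{3/2}\, D_{\sqrt c}\bigl(\L a\bigr).
\]
Thus $\L_c = c^{3/2}\, D_{\sqrt c}\, \L\, D_{\sqrt c}^{-1}$, i.e. $\L_c$ is similar to $c^{3/2}\L$ via the bounded invertible operator $D_{\sqrt c}$ on $L^2(\R)$ (with $D_{\sqrt c}^{-1} = D_{1/\sqrt c}$, and $\|D_{\sqrt c}a\|_{L^2}^2 = c^{-1/2}\|a\|_{L^2}^2$, so it is a scalar multiple of a unitary).

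Since a bounded similarity preserves the spectrum and the essential spectrum, and multiplication by the scalar $c^{3/2}>0$ scales both sets by $c^{3/2}$, I get $\sigma_{\mathrm{ess}}(\L_c) = c^{3/2}\sigma_{\mathrm{ess}}(\L) = c^{3/2}\, i\R = i\R$ and $\sigma(\L_c)\cap\R = c^{3/2}(\sigma(\L)\cap\R) = \{-e_c, 0, e_c\}$ with $e_c = c^{3/2}e_0$. Simplicity of the eigenvalues $\pm e_c$ is likewise inherited from simplicity of $\pm e_0$, since $D_{\sqrt c}$ is an isomorphism of the corresponding (one-dimensional) eigenspaces; and the null space of $\L_c$ is $D_{\sqrt c}$ applied to $\mathrm{span}(Q') = \mathrm{span}(\px(D_{\sqrt c}^{-1} Q_c\cdot \text{const}))$, which is $\mathrm{span}(Q_c')$ after the chain rule. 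For the eigenfunctions: $\L Y^\pm = \pm e_0 Y^\pm$ gives $\L_c(D_{\sqrt c}Y^\pm) = c^{3/2}D_{\sqrt c}(\L Y^\pm) = \pm e_c\, (D_{\sqrt c}Y^\pm)$, so up to the normalizing constant one may take $Y_c^\pm(x) = c^{-1/2}Y^\pm(\sqrt c x)$; the factor $c^{-1/2}$ is a harmless normalization choice (consistent with the $L^2$-scaling above and convenient for later pairings), and the relation $Y_c^- = \check{Y_c^+}$ follows from $Y^- = \check Y^+$ because the reflection $x\mapsto -x$ commutes with the even dilation $D_{\sqrt c}$. Exponential decay of $Y_c^\pm$ is immediate from exponential decay of $Y^\pm$ and $\sqrt c > 0$.

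I do not expect any genuine obstacle here — the only point requiring a little care is bookkeeping the constants in the conjugation identity (the power $c^{3/2}$ arises as one factor $\sqrt c$ from each of the three $x$-derivatives in $\L_c = -\px(-\px^2 + c - pQ_c^{p-1})$, after the potential term is arranged to carry the matching factor $c$), and noting that the similarity operator, though not unitary on $L^2$ with its standard norm, is a positive scalar multiple of a unitary and hence still preserves all the spectral data claimed.
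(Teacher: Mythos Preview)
Your argument is correct and is exactly the ``simple scaling argument'' the paper invokes without writing out: the paper only records that $u_\lambda(t,x)=\lambda^{2/(p-1)}u(\lambda^3 t,\lambda x)$ is again a solution and that $Q_c(x)=c^{1/(p-1)}Q(\sqrt c\,x)$, leaving the corollary as an immediate consequence. Your conjugation identity $\L_c = c^{3/2} D_{\sqrt c}\,\L\,D_{\sqrt c}^{-1}$ is the precise operator-level expression of that scaling and yields all the claimed spectral statements; the only cosmetic slip is the exponent typo $c^{1/p-1}$ for $c^{1/(p-1)}$.
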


\subsubsection{Adjoint of $\L_c$}

We recall that Lemma 4.9 in \cite{combet}, under a suitable normalization of $Y^{\pm}$, shows important properties of the adjoint of $\L$. With the same normalization and by Corollary \ref{th:spectrumc}, we obtain the following lemma by a simple scaling argument. Recall that assertion (v) is proved in \cite{martel:Nsolitons} for $c=1$.

\begin{lem} \label{th:Zc}
Let $Z_c^{\pm} = L_cY_c^{\pm}$. Then the following properties hold:
\begin{enumerate}[(i)]
\item $Z_c^{\pm}$ are two eigenfunctions of $L_c\px$: $L_c(\px Z_c^{\pm}) = \mp e_c Z_c^{\pm}$.
\item There exists $\eta_0>0$ such that, for all $x\in\R$, \[ |Y_c^{\pm}(x)| + |\px Y_c^{\pm}(x)| + |Z_c^{\pm}(x)| + |\px Z_c^{\pm}(x)| \leq Ce^{-\eta_0\sqrt{c}|x|}. \]
\item $(Y_c^+,Z_c^+)=(Y_c^-,Z_c^-)=0$ and $(Z_c^+,Q'_c)=(Z_c^-,Q'_c)=0$.
\item $(Y_c^+,Z_c^-)=(Y_c^-,Z_c^+)=1$ and $(Q'_c,\px Y_c^+)>0$.
\item There exists $\widetilde{\sigma_c}>0$ such that, for all $v_c\in H^1$ such that $(v_c,Z_c^+)=(v_c,Z_c^-)=(v_c,Q'_c)=0$, $(L_cv_c,v_c)\geq \widetilde{\sigma_c} \nh{v_c}^2$.
\item There exist $\sigma_c>0$ and $C>0$ such that, for all $v_c\in H^1$, \[ (L_cv_c,v_c)\geq \sigma_c \nh{v_c}^2 -C\carre{(v_c,Z_c^+)} -C\carre{(v_c,Z_c^-)} -C\carre{(v_c,Q'_c)}. \]
\end{enumerate}
\end{lem}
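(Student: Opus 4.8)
The statement to prove is Lemma \ref{th:Zc}, which collects properties of the adjoint operator data $Z_c^\pm = L_c Y_c^\pm$.

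The plan is to obtain everything by scaling from the $c=1$ case, which is Lemma 4.9 in \cite{combet} (with assertion (v) from \cite{martel:Nsolitons}). First I would record the basic scaling relations. Since $Q_c(x) = c^{1/(p-1)} Q(\sqrt c x)$, a direct computation gives the conjugation identity $L_c a = c \, (L b)(\sqrt c \,\cdot)$ whenever $a(x) = b(\sqrt c x)$; more precisely, writing $D_c b(x) := b(\sqrt c x)$, one checks $L_c D_c = c\, D_c L$ and $\px D_c = \sqrt c \, D_c \px$, hence $\L_c D_c = -\px L_c D_c = -c^{3/2} D_c \px L = c^{3/2} D_c \L$. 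From this and $Y_c^\pm(x) = c^{-1/2} Y^\pm(\sqrt c x) = c^{-1/2} D_c Y^\pm$, assertion (i) of Corollary \ref{th:spectrumc} (that $\L_c Y_c^\pm = \pm e_c Y_c^\pm$ with $e_c = c^{3/2} e_0$) is immediate, and then $Z_c^\pm = L_c Y_c^\pm = c^{-1/2} L_c D_c Y^\pm = c^{1/2} D_c (L Y^\pm) = c^{1/2} D_c Z^\pm$, i.e. $Z_c^\pm(x) = c^{1/2} Z^\pm(\sqrt c x)$. With these explicit dilation formulas for $Y_c^\pm$ and $Z_c^\pm$, every assertion reduces to its $c=1$ counterpart.

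I would then go through the six items. For (i): apply $L_c \px$ to $Z_c^\pm = c^{1/2} D_c Z^\pm$, using $\px D_c = \sqrt c D_c \px$ and $L_c D_c = c D_c L$, to get $L_c \px Z_c^\pm = c^{1/2} \cdot c^{3/2} D_c (L \px Z^\pm) = c^2 D_c(\mp e_0 Z^\pm) = \mp c^{3/2} e_0 \cdot c^{1/2} D_c Z^\pm = \mp e_c Z_c^\pm$. For (ii): the exponential bounds on $Y^\pm, \px Y^\pm, Z^\pm, \px Z^\pm$ at rate $\eta_0$ (the $c=1$ case) transfer through $x \mapsto \sqrt c x$, producing decay at rate $\eta_0 \sqrt c$; the prefactors $c^{-1/2}, c^{1/2}$ and the extra $\sqrt c$ from differentiating are absorbed into the constant $C$ (which is allowed to depend on $c$ per the conventions in the Notation block). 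For (iii) and (iv): each is an $L^2$ scalar product of two dilated functions, and a change of variables $y = \sqrt c x$ shows $(f(\sqrt c \cdot), g(\sqrt c \cdot)) = c^{-1/2}(f,g)$; combined with the explicit powers of $c$ in the definitions of $Y_c^\pm, Z_c^\pm, Q_c'$ this gives, for instance, $(Y_c^+, Z_c^-) = c^{-1/2} \cdot c^{1/2} \cdot c^{-1/2}(Y^+, Z^-) \cdot c^{1/2}$ — one must just track the normalization carefully so the value $1$ is preserved and the sign of $(Q_c', \px Y_c^+)$ is unchanged; the vanishing products stay zero trivially. For (v): given $v_c$ orthogonal to $Z_c^+, Z_c^-, Q_c'$, set $v(y) = v_c(y/\sqrt c)$ so that $v$ is orthogonal to $Z^\pm, Q'$ (again by the change-of-variables identity), apply the $c=1$ coercivity $(Lv,v) \ge \widetilde\sigma \nh{v}^2$, and convert both sides back: $(L_c v_c, v_c)$ and $\nh{v_c}^2$ each pick up explicit powers of $c$, yielding $\widetilde{\sigma_c}$. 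For (vi): this is the standard consequence of (v) — decompose an arbitrary $v_c \in H^1$ as its projection onto $\mathrm{span}(Y_c^+, Y_c^-, Q_c'/\|Q_c'\|^2)$ (chosen dual to $Z_c^+, Z_c^-, Q_c'$ by (iii)–(iv)) plus a remainder in the orthogonal complement of $Z_c^+, Z_c^-, Q_c'$, apply (v) to the remainder, and estimate the cross terms using Cauchy–Schwarz and the boundedness of $L_c$ on $H^1$; alternatively, quote it directly as the scaled form of the $c=1$ statement in \cite{martel:Nsolitons}.

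The argument is essentially bookkeeping, so there is no deep obstacle; the one place to be careful is the normalization in (iv). The statement asserts the \emph{exact} values $(Y_c^+, Z_c^-) = (Y_c^-, Z_c^+) = 1$ and the strict positivity $(Q_c', \px Y_c^+) > 0$, so one must verify that the chosen scaling convention $Y_c^\pm(x) = c^{-1/2} Y^\pm(\sqrt c x)$ is exactly the one that makes the powers of $c$ cancel — i.e. that the ``suitable normalization of $Y^\pm$'' fixed in \cite{combet} is compatible with this dilation. Checking $(Y_c^+, Z_c^-) = \int c^{-1/2} Y^+(\sqrt c x) \cdot c^{1/2} Z^-(\sqrt c x)\, dx = \int Y^+(\sqrt c x) Z^-(\sqrt c x)\, dx = c^{-1/2}(Y^+, Z^-)$ shows there is in fact a residual $c^{-1/2}$, so the normalization of $Y_c^\pm$ must actually be $Y_c^\pm(x) = Y^\pm(\sqrt c x)$ up to the right power — this is precisely the kind of detail the proof must pin down, and it is the only subtlety worth dwelling on; everything else follows mechanically.
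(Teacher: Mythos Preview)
Your approach matches the paper's exactly: the paper gives no detailed proof, merely citing Lemma~4.9 of \cite{combet} (and \cite{martel:Nsolitons} for (v)) at $c=1$ and invoking ``a simple scaling argument'' under the same normalization. The normalization discrepancy you flag in (iv) is real --- with $Y_c^\pm = c^{-1/2} D_c Y^\pm$ and hence $Z_c^\pm = c^{1/2} D_c Z^\pm$ one gets $(Y_c^+,Z_c^-)=c^{-1/2}$ rather than $1$ --- but this is a cosmetic inconsistency in the paper's stated conventions rather than a gap in the argument: everywhere (iv) is invoked (the matrix $\Phi$ in Lemma~\ref{th:finaldata}, the identification in Remark~\ref{initialisation}) only the nonvanishing and the sign of the pairing matter, and the factor can be absorbed into the constants or the parameters $A_k$.
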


\subsection{Multi-solitons results}

A set of parameters \eqref{parameters} being given, we adopt the following notation.

\begin{notation} \label{Rj}
For all $j\in \unn$, define:
\begin{enumerate}[(i)]
\item $R_j(t,x)=Q_{c_j}(x-c_jt-x_j)$, where $Q_c(x)=c^{\frac{1}{p-1}}Q(\sqrt cx)$.
\item $Y_j^{\pm}(t,x)=Y_{c_j}^{\pm}(x-c_jt-x_j)$, where $Y_c^{\pm}(x)=c^{-1/2}Y^{\pm}(\sqrt cx)$ is defined in Corollary \ref{th:spectrumc}.
\item $Z_j^{\pm}(t,x)=Z_{c_j}^{\pm}(x-c_jt-x_j)$, where $Z_c^{\pm}=L_cY_c^{\pm}$.
\item $e_j=e_{c_j}$, where $e_c=c^{3/2}e_0$.
\end{enumerate}
\end{notation}

Now, to estimate interactions between solitons, we denote the small parameters \begin{equation} \label{gamma} \sigma_0 = \min \{ \eta_0^{2/3}c_1,e_0^{2/3}c_1,c_1,c_2-c_1,\ldots,c_N-c_{N-1} \} \quad \m{and} \quad  \c = \frac{\sigma_0^{3/2}}{10^6}. \end{equation}

From \cite{martel:Nsoliton}, it appears that $\c$ is a suitable parameter to quantify interactions between solitons in large time. For instance, we have, for $j\neq k$ and all $t\geq 0$, \begin{equation} \label{eq:interact} \int R_j(t)R_k(t) + |(R_j)_x(t)||(R_k)_x(t)| \leq Ce^{-10\c t}. \end{equation} From the definition of $\sigma_0$ and Lemma \ref{th:Zc}, such an inequality is also true for $Y_j^{\pm}$ and $Z_j^{\pm}$.

Moreover, since $\sigma_0$ has the same definition as in \cite{martel:Nsolitons}, then from their Remark 1, Theorem \ref{th:cmm} can be rewritten as follows. There exist $T_0\in\R$ and $\phib \in C([T_0,+\infty),H^1)$ such that, for all $s\geq 1$, there exists $A_s>0$ such that \begin{equation} \label{eq:cmm} \nhs{\phib(t)-R(t)} \leq A_s e^{-4\c t}. \end{equation}

\section{Construction of a family of multi-solitons} \label{sec:construction}

In this section, we prove the first point of Theorem \ref{th:main} as a consequence of the following crucial Proposition \ref{th:princ}. Let $p>5$, $N\geq 2$, $0<c_1<\cdots<c_N$ and $x_1,\ldots,x_N \in \R$. Denote $R=\sum_{k=1}^N R_k$ and $\phib$ a multi-soliton solution satisfying \eqref{eq:cmm}, as defined in Theorem \ref{th:cmm} for example.

\begin{prop} \label{th:princ}
Let $j\in \unn$ and $A_j\in\R$. Then there exist $t_0>0$ and $u \in C([t_0,+\infty),H^1)$ a solution of \eqref{eq:gKdV} such that \begin{equation} \label{eq:perturb} \forall t\geq t_0,\quad \nh{u(t)-\phib(t)-A_je^{-e_jt}Y_j^+(t)} \leq e^{-(e_j+\c)t}. \end{equation}
\end{prop}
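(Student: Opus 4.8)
The plan is to follow the compactness scheme of \cite{martel:Nsolitons}, adapted so that the target profile is not $R$ but the modified profile $\phib + A_j e^{-e_j t} Y_j^+$. Concretely, fix an increasing sequence $T_n \to +\infty$ and let $u_n$ be the solution of \eqref{eq:gKdV} with the ``final'' data $u_n(T_n) = \phib(T_n) + A_j e^{-e_j T_n} Y_j^+(T_n)$, together with a suitable choice inside the (one-dimensional, since we only perturb the $j$-th soliton) unstable subspace prescribed by a topological/Brouwer argument. Decompose $u_n(t) = \phib(t) + A_j e^{-e_j t} Y_j^+(t) + v_n(t)$ and run the standard modulation: introduce translation parameters so that $v_n$ is orthogonal to $(R_k)_x$ (or to $\px Z_k$) for each $k$, and split $v_n = \sum_k a_k^{\pm} Y_k^{\pm} + w_n$ where $w_n$ lies in the ``stable'' subspace on which the linearized energy is coercive by Lemma \ref{th:Zc}(v)--(vi). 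The aim is a uniform bootstrap estimate $\nh{v_n(t)} \le e^{-(e_j+\c)t}$ on $[t_0, T_n]$ with $t_0$ independent of $n$.

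The key steps, in order: (1) Write the equation for $v_n$; the forcing terms come from the fact that $\phib + A_j e^{-e_j t} Y_j^+$ is only an approximate solution — the $\phib$ part is exact, $A_j e^{-e_j t} Y_j^+$ solves the linearized equation around $R_j$ up to the interaction errors \eqref{eq:interact} and up to the quadratic terms $A_j^2 e^{-2e_j t} (\ldots)$, and since $2 e_j > e_j + \c$ these are all $O(e^{-(e_j+\c)t})$-admissible (this is where the definition \eqref{gamma} of $\c$ and the smallness built into $\sigma_0$ are used). (2) Derive the modulation ODEs for the translation parameters and for the coefficients $a_k^{\pm}(t)$: the stable coefficients $a_k^-$ satisfy $\frac{d}{dt} a_k^- \approx -e_k a_k^- + (\text{errors})$, hence are controlled forward in time by Gronwall, while the unstable coefficients $a_k^+$ satisfy $\frac{d}{dt} a_k^+ \approx +e_k a_k^+ + (\text{errors})$ and must be killed by the topological argument — one shows that for a well-chosen point in the final unstable data one gets $a_k^+(t) = O(e^{-(e_j+\c)t})$ on the whole interval. (3) Run the energy/virial bootstrap: using the coercivity of the linearized energy (Lemma \ref{th:Zc}(v)) modulo the controlled directions, together with the mass conservation and a localized energy functional $\mathcal{F}$ built from a partition of unity adapted to the distinct speeds $c_1 < \cdots < c_N$ (as in \cite{martel:tsai,martel:Nsolitons}), obtain $\frac{d}{dt}\mathcal{F} \le C e^{-(e_j+\c)t}\nh{v_n(t)} + Ce^{-2(e_j+\c)t}$, which combined with $\mathcal{F} \gtrsim \nh{v_n}^2$ and $v_n(T_n)=0$ closes the bootstrap on $[t_0,T_n]$ uniformly in $n$. (4) Extract a weak-$H^1$ limit $u_n(t_0) \rightharpoonup u(t_0)$ along a subsequence; by well-posedness and weak continuity of the flow, $u_n(t) \rightharpoonup u(t)$ for all $t \ge t_0$, the uniform bound $\nh{u_n(t) - \phib(t) - A_j e^{-e_j t} Y_j^+(t)} \le e^{-(e_j+\c)t}$ passes to the (weak) limit, and $u$ is the desired solution.

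The main obstacle I expect is step (3) combined with the uniform-in-$n$ control of the topological argument in step (2): one must simultaneously (a) verify that the modified profile only introduces errors of size $e^{-(e_j+\c)t}$ and no slower — in particular that the cross term between $A_j e^{-e_j t} Y_j^+$ and the other solitons $R_k$ ($k \ne j$) is genuinely $O(e^{-10\c t})$-small by \eqref{eq:interact}, and that the self-interaction $A_j^2 e^{-2 e_j t}$ is absorbed — and (b) set up the Brouwer fixed-point / continuity argument on the $a_k^+$ directions so that the exit time is $T_n$ for at least one choice of final data, uniformly as $n \to \infty$. The bookkeeping is delicate because the natural size of $v_n$ one can hope to bootstrap, $e^{-(e_j+\c)t}$, sits strictly between the two scales $e^{-e_j t}$ (the deliberate perturbation) and $e^{-2 e_j t}$ (its square), so every error term has to be tracked against the threshold $e_j + \c$; the choice $\c = \sigma_0^{3/2}/10^6$ in \eqref{gamma} is precisely what gives enough room. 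Once the bootstrap constant is shown to be uniform in $n$ and $t_0$, the compactness extraction in step (4) is routine.
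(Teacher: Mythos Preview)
Your overall scheme is the paper's, but two related points are genuinely wrong and would make the argument fail as written.

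First, the dimension of the topological argument is not one. You are prescribing final data at $T_n$ and integrating \emph{backward} to $t_0$; the modes that are dangerous backward in time are exactly those whose forward ODE is $\frac{d}{dt}\alpha_k \approx -e_k\alpha_k$ (your $a_k^-$), since these grow like $e^{e_k(T_n-t)}$ when read from $T_n$ down to $t$. Among these, the ones with $k\leq j$ are harmless: the forcing is of size $e^{-(e_j+4\c)s}$, and since $e_k\leq e_j$ the integral $\int_t^{T_n} e^{e_k(s-t)}e^{-(e_j+4\c)s}\,ds$ stays $\leq Ce^{-(e_j+4\c)t}$. But for $k>j$ one has $e_k>e_j+\c$ and this integral diverges as $T_n\to\infty$; those $N-j$ directions must all be tuned by Brouwer. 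The paper therefore takes final data
\[
u_n(S_n)=\phib(S_n)+A_je^{-e_jS_n}Y_j^+(S_n)+\sum_{k>j} b_{n,k}Y_k^+(S_n),
\]
and runs the Brouwer fixed-point argument on $\b_n=(b_{n,k})_{j<k\leq N}\in\R^{N-j}$. The justification ``one-dimensional, since we only perturb the $j$-th soliton'' is a non sequitur: the deliberate perturbation sits on the $j$-th soliton, but the error $v_n$ couples to the unstable modes of \emph{every} soliton through the nonlinearity and the source term $\Omega$, and it is the faster solitons $k>j$ that cannot be controlled passively at the target rate $e^{-(e_j+\c)t}$.

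Second, and related, your labeling of stable versus unstable is inverted for this backward problem. The coefficients with $\frac{d}{dt}a_k^+\approx +e_k a_k^+$ decay going backward from $T_n$ and are the easy ones (the paper calls these the ``stable directions'' and handles all $k\in\unn$ at once); the coefficients with $\frac{d}{dt}a_k^-\approx -e_k a_k^-$ are the backward-unstable ones, split into $k\leq j$ (direct control) and $k>j$ (Brouwer). Your step (2) has Gronwall and Brouwer attached to the wrong families. Likewise the energy inequality in step (3) should be a \emph{lower} bound $\frac{dH}{dt}\geq -(\ldots)$ so that integrating on $[t,T_n]$ yields $H(t)\leq H(T_n)+(\ldots)$; as stated, $\frac{d}{dt}\mathcal{F}\leq(\ldots)$ together with $\mathcal{F}(T_n)=0$ gives no control for $t<T_n$.

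Once these are fixed --- Brouwer on $\R^{N-j}$ for the $(\alpha_k^-)_{k>j}$, direct integration for $\alpha_k^+$ (all $k$) and $\alpha_k^-$ ($k\leq j$), and the correct sign in the monotonicity of $H$ --- the rest of your outline (error estimate on the modified profile, coercivity modulo the $Z_k^\pm$ and $R_{kx}$ directions, weak compactness) matches the paper's proof.
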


Before proving this proposition, let us show how this proposition implies the first point of Theorem \ref{th:main}.

\begin{proof}[Proof of 1. of Theorem \ref{th:main}]
Let $(\an)\in\R^N$.
\begin{enumerate}[(i)]
\item Consider $\phib_{A_1}$ the solution of \eqref{eq:gKdV} given by Proposition \ref{th:princ} applied with $\phib$ given by Theorem \ref{th:cmm}. Thus there exists $t_0>0$ such that \[ \forall t\geq t_0,\quad \nh{\phib_{A_1}(t)-\phib(t)-A_1e^{-e_1 t}Y_1^+(t)} \leq e^{-(e_1+\c)t}. \] Now remark that $\phib_{A_1}$ is also a multi-soliton, which satisfies \eqref{eq:cmm} by the definition of $\c$ and the same techniques used in \cite[Section 3.4]{martel:Nsoliton} to improve the estimate in higher order Sobolev norms. Hence we can apply Proposition \ref{th:princ} with $\phib_{A_1}$ instead of $\phib$, so that we obtain $\phib_{A_1,A_2}$ such that \[ \forall t\geq t'_0,\quad \nh{\phib_{A_1,A_2}(t)-\phib_{A_1}(t)-A_2e^{-e_2t}Y_2^+(t)} \leq e^{-(e_2+\c)t}. \] Similarly, for all $j\in\unn$, we construct by induction a solution $\phib_{\aj}$ such that \begin{equation} \label{eq:phiaj} \forall t\geq t_0,\quad \nh{\phib_{\aj}(t)-\phib_{A_1,\ldots,A_{j-1}}(t)-A_je^{-e_jt}Y_j^+(t)} \leq e^{-(e_j+\c)t}. \end{equation} Observe finally that $\phib_{\an}$ constructed by this way satisfies \eqref{eq:cmm}.
\item Let $(A'_1,\ldots,A'_N)\in\R^N$ be such that $(A'_1,\ldots,A'_N)\neq (\an)$, and suppose in the sake of contradiction that $\phib_{A'_1,\ldots,A'_N} = \phib_{\an}$. Denote $i_0 = \min\{ i\in\unn\ |\ A'_i\neq A_i \}$. Hence we have $A'_i=A_i$ for $i\in \ient{1}{i_0-1}$, $A'_{i_0}\neq A_{i_0}$ and from the construction of $\phib_{\an}$, \begin{align*} \phib_{\an}(t) &= \phib_{A_1,\ldots,A_{N-1}}(t) +A_Ne^{-e_Nt}Y_N^+(t) +z_N(t)\\ &= \phib_{A_1,\ldots,A_{N-2}}(t) +A_{N-1}e^{-e_{N-1}t}Y_{N-1}^+(t)+A_Ne^{-e_Nt}Y_N^+(t) +z_{N-1}(t)+z_N(t)\\ &= \cdots = \phib_{A_1,\ldots,A_{i_0-1}}(t) +A_{i_0}e^{-e_{i_0}t}Y_{i_0}^+(t) +\sum_{k>i_0} A_k e^{-e_kt}Y_k^+(t) +\sum_{k\geq i_0} z_k(t) \end{align*} where $z_k$ satisfies $\nh{z_k(t)}\leq e^{-(e_k+\c)t}$ for $t\geq t_0$ and each $k\geq i_0$. Similarly, we get \[ \phib_{A'_1,\ldots,A'_N}(t) = \phib_{A'_1,\ldots,A'_{i_0-1}}(t) +A'_{i_0}e^{-e_{i_0}t}Y_{i_0}^+(t) +\sum_{k>i_0} A'_k e^{-e_kt}Y_k^+(t) +\sum_{k\geq i_0} \widetilde{z_k}(t), \] and so using $\phib_{A'_1,\ldots,A'_N} = \phib_{\an}$ and $\phib_{A'_1,\ldots,A'_{i_0-1}} = \phib_{A_1,\ldots,A_{i_0-1}}$, we obtain \[ e^{-e_{i_0}t}|A_{i_0}-A'_{i_0}|\leq Ce^{-(e_{i_0}+\c)t} \] for $t\geq t_0$, thus $|A_{i_0}-A'_{i_0}|\leq Ce^{-\c t}$, and so $A'_{i_0}=A_{i_0}$ by letting $t\to +\infty$, which is a contradiction and concludes the proof. \qedhere
\end{enumerate}
\end{proof}

Now, the only purpose of the rest of this section is to prove Proposition \ref{th:princ}. Let $j\in \unn$ and $A_j\in\R$. We want to construct a solution $u$ of \eqref{eq:gKdV} such that \[ z(t,x)= u(t,x)-\phib(t,x)-A_je^{-e_jt}Y_j^+(t,x) \] satisfies $\nh{z(t)}\leq e^{-(e_j+\c)t}$ for $t\geq t_0$ with $t_0$ large enough.

\subsection{Equation of $z$}

Since $u$ is a solution of \eqref{eq:gKdV} and also $\phib$ is (and this fact is crucial for the whole proof), we get \[ \partial_t z+\px^3 z +\px[{(\phib+A_je^{-e_jt}Y_j^+ +z)}^p-\phib^p] +A_je^{-e_jt}[\px^3 Y_j^+ -c_j\px Y_j^+ -e_jY_j^+] =0. \] But from Corollary \ref{th:spectrumc}, we have \[ \L_{c_j}Y_{c_j}^+ = e_jY_{c_j}^+ = \px^3 Y_{c_j}^+ -c_j\px Y_{c_j}^+ +p\px(Q_{c_j}^{p-1}Y_{c_j}^+) \] and so following Notation \ref{Rj}, we get the following equation for $z$: \begin{equation} \label{eq:z1} \partial_t z +\px^3 z +\px[{(\phib+A_je^{-e_jt}Y_j^+ +z)}^p -\phib^p -pA_je^{-e_jt}R_j^{p-1}Y_j^+] = 0. \end{equation} This can also be written \begin{multline*} \partial_t z + \px\big[\px^2 z+ p\phib^{p-1}z\big] + \px\big[{(\phib+A_je^{-e_jt}Y_j^++z)}^p -{(\phib+A_je^{-e_jt}Y_j^+)}^p -p{(\phib+A_je^{-e_jt}Y_j^+)}^{p-1}z\big]\\ +p\px\big[({(\phib+A_je^{-e_jt}Y_j^+)}^{p-1}-\phib^{p-1})\cdot z\big] = -\px\big[{(\phib+A_je^{-e_jt}Y_j^+)}^p -\phib^p -pA_je^{-e_jt}Y_j^+ R_j^{p-1}\big]. \end{multline*}

Finally, if we denote \[ \left\{ \begin{aligned} \omega_1 &= p[{(\phib+A_je^{-e_jt}Y_j^+)}^{p-1}-\phib^{p-1}],\\ \omega(z) &= {(\phib+A_je^{-e_jt}Y_j^++z)}^p -{(\phib+A_je^{-e_jt}Y_j^+)}^p -p{(\phib+A_je^{-e_jt}Y_j^+)}^{p-1}z,\\ \Omega &= {(\phib+A_je^{-e_jt}Y_j^+)}^p -\phib^p -pA_je^{-e_jt}Y_j^+ R_j^{p-1}, \end{aligned} \right. \] we obtain the shorter form of the equation of $z$: \begin{equation} \label{eq:z2} \partial_t z + \px\big[\px^2 z+ p\phib^{p-1}z\big] + \px[\omega_1\cdot z] +\px[\omega(z)] = -\px\Omega. \end{equation}

Note that the term $\omega(z)$ is the nonlinear term in $z$, and that $\omega_1$ satisfies, for all $s\geq 0$, ${\|\omega_1(t)\|}_{H^s}\leq C_se^{-e_jt}$ for all $t\geq 0$. Moreover, the source term $\Omega$ satisfies \begin{equation} \label{Omega} \forall s\geq 1, \exists C_s>0, \forall t\geq 0,\quad \nhs{\Omega(t)} \leq C_se^{-(e_j+4\c)t}. \end{equation} Indeed, if we write $\Omega$ under the form \begin{multline*} \Omega = \big[ {(\phib+A_je^{-e_jt}Y_j^+)}^p-\phib^p-p\phib^{p-1}A_je^{-e_jt}Y_j^+ \big]\\ +pA_je^{-e_jt}Y_j^+(\phib^{p-1}-R^{p-1}) +pA_je^{-e_jt}Y_j^+(R^{p-1}-R_j^{p-1}), \end{multline*} we deduce from \eqref{eq:cmm}, \eqref{eq:interact} and the definition of $\c$ \eqref{gamma} that \[ \nhs{\Omega(t)} \leq Ce^{-2e_jt} +Ce^{-e_jt}\nhs{\phib(t)-R(t)} +Ce^{-e_jt}\cdot e^{-4\c t} \leq Ce^{-(e_j+4\c)t}. \]

\subsection{Compactness argument assuming uniform estimate}

To prove Proposition \ref{th:princ}, we follow the strategy of \cite{martel:Nsoliton,martel:Nsolitons}. Let $S_n\to+\infty$ be an increasing sequence of time, $\b_n={(b_{n,k})}_{j<k\leq N} \in\R^{N-j}$ be a sequence of parameters to be determined, and let $u_n$ be the solution of \begin{equation} \label{un} \begin{cases} \partial_t u_n + \px[\px^2 u_n +u_n^p]=0,\\ \displaystyle u_n(S_n) = \phib(S_n) +A_je^{-e_jS_n}Y_j^+(S_n)+\sum_{k>j} b_{n,k}Y_k^+(S_n). \end{cases} \end{equation}

\begin{notation}
\begin{enumerate}[(i)]
\item $\R^N$ is equipped with the $\ell^2$ norm, simply denoted $\|\cdot\|$.
\item $B_{\mathcal{B}}(P,r)$ is the closed ball of the Banach space $\mathcal{B}$, centered at $P$ and of radius $r\geq 0$. If $P=0$, we simply write $B_{\mathcal{B}}(r)$.
\item $\S_{\R^N}(r)$ denotes the sphere of radius $r$ in $\R^N$.
\end{enumerate}
\end{notation}

\begin{prop} \label{th:princbis}
There exist $n_0\geq 0$ and $t_0>0$ (independent of $n$) such that the following holds. For each $n\geq n_0$, there exists $\b_n\in\R^{N-j}$ with $\| \b_n\| \leq 2e^{-(e_j+2\c)S_n}$, and such that the solution $u_n$ of \eqref{un} is defined on the interval $[t_0,S_n]$, and satisfies \[ \forall t\in [t_0,S_n],\quad \nh{u_n(t)-\phib(t) -A_je^{-e_jt}Y_j^+(t)} \leq e^{-(e_j+\c)t}. \]
\end{prop}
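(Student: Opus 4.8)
The plan is to prove Proposition~\ref{th:princbis} by a topological shooting argument combined with energy (Lyapunov-type) estimates, following closely the scheme of \cite{martel:Nsolitons}. Fix $n$ large and write $z_n(t) = u_n(t) - \phib(t) - A_je^{-e_jt}Y_j^+(t)$, so that by construction $z_n(S_n) = \sum_{k>j} b_{n,k}Y_k^+(S_n)$. For a parameter vector $\b_n\in B_{\R^{N-j}}(2e^{-(e_j+2\c)S_n})$, let $u_n$ solve \eqref{un} backward from $S_n$ and let $T_n = T_n(\b_n)\in[t_0,S_n]$ be the minimal time down to which the bootstrap estimate $\nh{z_n(t)}\leq e^{-(e_j+\c)t}$ holds on $[T_n,S_n]$. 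The goal is to show that for a suitable choice of $\b_n$ one has $T_n = t_0$.

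First I would set up the modulation: on the interval where $z_n$ is small, decompose $u_n(t) = R_j^{(n)}(t) + \sum_{k\neq j}R_k(t) + (\text{corrections}) + \eps_n(t)$, or more simply work directly with $z_n$ and project, extracting the components $a_k^{\pm}(t) = (z_n(t),Z_k^{\pm}(t))$ along the unstable/stable directions of each soliton, together with geometric parameters for the translation of each bump. Using equation \eqref{eq:z2} for $z$ (with $u_n$ in place of $u$) and Lemma~\ref{th:Zc}(i), the scalar quantities $a_k^{\pm}$ satisfy approximate ODEs $\frac{d}{dt}a_k^{\pm} = \pm e_k a_k^{\pm} + O(\text{small})$, where the error is controlled by $\nh{z_n}^2$, by the source $\nh{\Omega}\lesssim e^{-(e_j+4\c)t}$ from \eqref{Omega}, by $\omega_1$-type terms of size $e^{-e_jt}\nh{z_n}$, and by the soliton interaction terms of size $e^{-10\c t}$ from \eqref{eq:interact}. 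The stable modes $a_k^-$ and, crucially, the modes attached to solitons $k<j$ as well as $a_j^+$ (whose growth is exactly cancelled by the $A_je^{-e_jt}Y_j^+$ ansatz, this being the whole point of including that term) are shown to stay $\lesssim e^{-(e_j+2\c)t}$ by integrating the ODE inequality \emph{forward} from $t_0$, using the smallness of the data at $S_n$ is not needed here --- rather one integrates using that the mode is small and the drift is stabilizing. The $N-j$ genuinely unstable modes $a_k^+$ for $k>j$ are the ones that must be killed by the shooting parameter $\b_n$.

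Next comes the energy estimate, which gives the $H^1$ control of the "dispersive part" of $z_n$ (the piece orthogonal to all the $Z_k^{\pm}$ and $\partial_x R_k$). Following \cite{martel:Nsoliton,martel:Nsolitons}, I would introduce a localized functional of the form $\mathcal{F}(t) = \int (\partial_x z_n)^2 + \sum_k (c_k - p R_k^{p-1})\,z_n^2\,\psi_k + (\text{cubic and higher corrections})$, built from the conservation laws and cutoffs $\psi_k$ separating the solitons, so that $\mathcal{F}$ is coercive modulo the bad directions (by Lemma~\ref{th:Zc}(vi) applied near each bump) and almost monotone: $\frac{d}{dt}\mathcal{F}(t) \lesssim e^{-\theta t}\mathcal{F}(t) + (\text{source terms})$ with $\theta>0$ coming from the soliton speeds. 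Combining coercivity, the monotonicity of $\mathcal{F}$, the control of the projections, the source bound \eqref{Omega}, and the smallness $\nh{z_n(S_n)}\leq Ce^{-(e_j+2\c)S_n}$, one obtains on $[T_n,S_n]$ a bound $\nh{z_n(t)}^2 \lesssim e^{-2(e_j+2\c)t} + \sum_{k>j}|a_k^+(t)|^2$, i.e. the bootstrap estimate closes with a strict gain ($2\c$ versus $\c$) \emph{provided} the unstable modes $a_k^+$, $k>j$, can be kept of size $e^{-(e_j+2\c)t}$ as well.

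Finally, the topological argument: define $\Phi_n : B_{\R^{N-j}}(2e^{-(e_j+2\c)S_n}) \to \R^{N-j}$ by flowing to the exit time and reading off the rescaled unstable modes, e.g. $\Phi_n(\b_n) = \big(e^{(e_j+2\c)T_n} a_k^+(T_n)\big)_{k>j}$ when $T_n>t_0$. The ODE for $a_k^+$ shows that at any exit time the vector of unstable modes points "outward" (the quadratic-in-time-exponential lower bound $\frac{d}{dt}|a_k^+|^2 \geq 2e_k|a_k^+|^2 - (\text{small})$ forces transversality of the flow through the boundary sphere $\S_{\R^{N-j}}$ of the rescaled ball), so $\Phi_n$ is continuous and restricts to (a map homotopic to) the identity on the boundary sphere; by the no-retraction / Brouwer degree argument there must exist $\b_n$ in the ball for which no exit occurs, i.e. $T_n(\b_n) = t_0$, and moreover a posteriori $\|\b_n\|\leq 2e^{-(e_j+2\c)S_n}$ with the sharper interior bound from the forward estimates. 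That choice of $\b_n$ gives exactly the conclusion of Proposition~\ref{th:princbis}. I expect the main obstacle to be the bookkeeping in the energy functional: one must simultaneously handle $N$ separated solitons (requiring carefully chosen time-dependent cutoffs and exploiting \eqref{eq:interact}), the extra inhomogeneous ansatz term $A_je^{-e_jt}Y_j^+$ and its contribution $\omega_1 z$, and the fact that the "good" spectral gap from Lemma~\ref{th:Zc}(vi) is only available after subtracting \emph{all} the projections --- so the estimates on the projections and the energy estimate are coupled and must be closed together in a single bootstrap, with the strict exponent gain $\c$ providing the room to absorb all cross terms as $t_0\to+\infty$.
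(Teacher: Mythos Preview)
Your overall strategy---a localized energy/Lyapunov functional, projections onto $Z_k^{\pm}$ with approximate ODEs, and a Brouwer-type shooting argument on the remaining modes---is exactly the paper's approach. However, you have the roles of the $+$ and $-$ modes reversed, and this is not merely cosmetic: as written, the argument fails.

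You solve \eqref{un} \emph{backward} from $S_n$. The modes $a_k^+$ (with $\frac{d}{dt}a_k^+\approx +e_k a_k^+$) grow forward and therefore \emph{decay} backward; since $z(S_n)=\sum_{k>j}b_kY_k^+(S_n)$ gives $(z(S_n),Z_k^+)=O(e^{-\c S_n}\|\b\|)$ by Lemma~\ref{th:Zc}(iii), every $a_k^+$ stays $\lesssim e^{-(e_j+4\c)t}$ by direct integration on $[t,S_n]$. These are the \emph{easy} modes, not the ones requiring shooting. Conversely, the modes $a_k^-$ (with $\frac{d}{dt}a_k^-\approx -e_k a_k^-$) decay forward and \emph{grow} backward. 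For $k\le j$ the backward growth rate $e_k\le e_j$ is slow enough and the smallness $|a_k^-(S_n)|\lesssim e^{-\c S_n}\|\b\|$ suffices; but for $k>j$ one has $e_k>e_j+2\c$, so $e^{e_k(S_n-t)}|a_k^-(S_n)|$ diverges as $S_n\to\infty$ unless the data are tuned. It is therefore $\alpham(t)=(a_k^-(t))_{k>j}$ that must be controlled by the topological argument---and indeed the chosen final data give $(z(S_n),Z_k^-)\approx b_k$ by Lemma~\ref{th:Zc}(iv), so $\b$ parametrizes precisely these components.

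This sign error also breaks your transversality claim: from $\frac{d}{dt}|a_k^+|^2\ge 2e_k|a_k^+|^2-(\text{small})$ the rescaled norm $\|e^{(e_j+2\c)t}(a_k^+)_{k>j}\|^2$ is \emph{increasing} forward, hence \emph{decreasing} backward, so the backward flow points \emph{inward} at the boundary sphere and no retraction is produced. The paper instead shows $\N(t)=\|e^{(e_j+2\c)t}\alpham(t)\|^2$ satisfies $\N'(\tau)\le -\theta<0$ whenever $\N(\tau)=1$ (using $e_{j+1}-e_j>2\c$), which is the correct outward transversality for backward evolution. Two further corrections: the bootstrap assumption must include $e^{(e_j+2\c)t}\alpham(t)\in B_{\R^{N-j}}(1)$ alongside the $H^1$ bound on $z$, so that the exit time for the shooting map is well defined; and all ODE integrations run from $S_n$, not ``forward from $t_0$'', since the only data you prescribe are at $S_n$. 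Finally, the role of the ansatz term $A_je^{-e_jt}Y_j^+$ is not to cancel a specific projection of $z$ but to force the source $\Omega$ in \eqref{eq:z2} to be $O(e^{-(e_j+4\c)t})$ via \eqref{Omega}; that extra $4\c$ is what yields the strict gain closing the bootstrap.
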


Assuming this proposition and the following lemma of weak continuity of the flow, we can deduce the proof of Proposition \ref{th:princ}. The proof of Proposition \ref{th:princbis} is postponed to the next section, whereas the proof of Lemma \ref{th:wcf} is postponed to Appendix \ref{appendix}.

\begin{lem} \label{th:wcf}
Suppose that $z_{0,n}\cvf z_0$ in $H^1$, and that there exits $T>0$ such that the solution $z_n(t)$ corresponding to initial data $z_{0,n}$ exists for $t\in [0,T]$ and $\supt \nh{z_n(t)} \leq K$. Then for all $t\in [0,T]$, the solution $z(t)$ corresponding to initial data $z_0$ exists, and $z_n(T)\cvf z(T)$ in $H^1$.
\end{lem}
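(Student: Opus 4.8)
The plan is to establish this weak continuity by a compactness argument: the uniform $H^1$ bound, fed back into the equation, upgrades weak $H^1$ convergence to strong local convergence, which is exactly what is needed to pass to the limit in the nonlinearity. First I would reduce to the flow of \eqref{eq:gKdV} itself. Setting $u_n=z_n+\phib+A_je^{-e_jt}Y_j^+$, the function $\phib+A_je^{-e_jt}Y_j^+$ is fixed (independent of $n$) and lies in $C([0,T],H^s)$ for every $s$, so $u_n$ solves \eqref{eq:gKdV}, the data satisfy $u_n(0)\cvf u_0:=z_0+\phib(0)+A_jY_j^+(0)$ in $H^1$, and $\supt\nh{u_n(t)}\leq K+C$. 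Since the conclusion $u_n(T)\cvf u(T)$ is equivalent, via the same fixed shift, to $z_n(T)\cvf z(T)$, it suffices to prove the statement for \eqref{eq:gKdV}; I keep writing $z_n$, $z_{0,n}$, $z$ for the corresponding objects. The hypotheses then give that $(z_n)$ is bounded in $L^\infty([0,T],H^1)$, so up to a subsequence $z_n\cvf z$ weak-$*$ in $L^\infty([0,T],H^1)$ for some $z\in L^\infty([0,T],H^1)$.

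The core step is the compactness needed for the nonlinear term. From $\partial_t z_n=-\px^3 z_n-\px(z_n^p)$ and $H^1\hookrightarrow L^\infty$, the term $z_n^p$ is bounded in $H^1$, hence $\px(z_n^p)$ is bounded in $L^2$, while $\px^3 z_n$ is bounded in $H^{-2}$; thus $\partial_t z_n$ is bounded in $L^\infty([0,T],H^{-2})$. On each bounded interval $I\subset\R$ the embedding $H^1(I)\hookrightarrow H^s(I)$ is compact for $s<1$, so by the Aubin--Lions--Simon lemma $(z_n)$ is relatively compact in $C([0,T],H^s(I))$; a diagonal extraction over $I=(-m,m)$ then yields $z_n\to z$ strongly in $C([0,T],L^2_{\mathrm{loc}})$. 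Because $z_n^p-z^p=(z_n-z)\sum_{i=0}^{p-1} z_n^{i}z^{p-1-i}$ with the factor bounded in $L^\infty$, this gives $z_n^p\to z^p$ in $L^2_{\mathrm{loc}}$, so every term of the equation converges in $\mathcal{D}'((0,T)\times\R)$ and $z$ is a distributional solution of \eqref{eq:gKdV}. For the initial data, Rellich turns $z_{0,n}\cvf z_0$ in $H^1$ into $z_{0,n}\to z_0$ in $L^2_{\mathrm{loc}}$, while $z_n(0)\to z(0)$ in $L^2_{\mathrm{loc}}$ by the local strong convergence; hence $z(0)=z_0$.

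It remains to identify $z$ with the genuine solution and to conclude at time $T$. Iterating the local Cauchy theory of Kenig, Ponce and Vega \cite{kpv} on subintervals whose length depends only on the uniform bound $K$, one gets a bound, uniform in $n$, on the Kato-type smoothing norms defining the uniqueness class $Y_T$; passing to the weak limit places $z\in Y_T$, so the uniqueness part of \cite{kpv} identifies $z$ as the unique solution issued from $z_0$. In particular $z$ exists on all of $[0,T]$, and since this limit is unique, every subsequence of $(z_n)$ admits a further subsequence converging to the same $z$, so the whole sequence converges. Finally $(z_n(T))$ is bounded in $H^1$ by $K$: any weakly convergent subsequence has some limit $w\in H^1$, and $z_n(T)\to z(T)$ in $L^2_{\mathrm{loc}}$ forces $w=z(T)$ as a distribution, whence $z_n(T)\cvf z(T)$ in $H^1$ along the full sequence.

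The main obstacle is precisely this compactness step: the gKdV flow is not weakly continuous in general, and weak $H^1$ convergence cannot be passed through the nonlinearity $z^p$. It is the uniform $H^1$ bound inserted into the equation---producing the $H^{-2}$ control of $\partial_t z_n$ and hence the Aubin--Lions compactness---that rescues strong local convergence; alternatively one could invoke the Kato local smoothing effect to gain a half-derivative in space, but the time-derivative bound is the more economical route here. The only other delicate point is checking that the weak limit lies in the uniqueness class $Y_T$, which is handled by propagating uniform-in-$n$ smoothing bounds through the limit.
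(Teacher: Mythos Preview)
Your opening reduction is unnecessary: in the lemma the $z_n$ are already solutions of \eqref{eq:gKdV} (it is applied directly to the $u_n$), so nothing needs to be unpacked. After that your argument is correct but takes a genuinely different route from the paper's. You use Aubin--Lions--Simon: the uniform $H^1$ bound plus the $H^{-2}$ bound on $\partial_t z_n$ read from the equation give strong convergence in $C([0,T],L^2_{\mathrm{loc}})$, enough to pass to the limit in $z_n^p$ distributionally, and you then identify the limit via the KPV uniqueness class $Y_T$. The paper instead truncates in frequency to reduce to $H^3$ data (using the $H^{3/4}$ well-posedness to control the truncation error uniformly in $n$), obtains uniform $H^2$ and $H^3$ bounds by Kato-type modified energies and Gr\"onwall, hence boundedness of $(z_n)$ in $H^1_{t,x}$, and then passes to the limit by a duality argument, testing against the solution of an adjoint linearized problem with terminal data $g\in C_0^\infty$. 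Your route is shorter and uses only soft compactness; the paper's is longer but entirely self-contained, never invoking weak compactness in the mixed-norm spaces defining $Y_T$. The one place your sketch hides real work is the identification step: you need weak(-$*$) compactness in each mixed-norm component of $Y_T$, that the resulting limits agree with the distributional limit $z$, and that a distributional solution in $C_w([0,T],H^1)\cap Y_T$ actually satisfies the Duhamel formulation so that KPV uniqueness applies---all standard, but this is precisely where the argument would need to be written out carefully.
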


\begin{rem}
Note that the proof of Lemma \ref{th:wcf} strongly relies on the Cauchy theory in $H^s$ with $s<1$, developed in \cite{kpv}. Thus this argument is quite similar to the compactness argument developed in \cite{martel:Nsolitons} or \cite{martel:Nsoliton}.
\end{rem}

\begin{proof}[Proof of Proposition \ref{th:princ} assuming Proposition \ref{th:princbis}]
We may assume $n_0=0$ in Proposition \ref{th:princbis} without loss of generality. It follows from this proposition that there exists a sequence $u_n(t)$ of solutions to \eqref{eq:gKdV}, defined on $[t_0,S_n]$, such that the following uniform estimates hold: \[ \forall n\geq 0, \forall t\in [t_0,S_n],\quad \nh{u_n(t)-\phib(t)-A_je^{-e_jt}Y_j^+(t)}\leq e^{-(e_j+\c)t}. \] In particular, there exists $C_0>0$ such that $\nh{u_n(t_0)}\leq C_0$ for all $n\geq 0$. Thus there exists $u_0\in H^1(\R)$ such that $u_n(t_0)\cvf u_0$ in $H^1$ weak (after passing to a subsequence). Now consider $u$ solution of \[ \begin{cases} \partial_t u +\px[\px^2 u+u^p]=0,\\ u(t_0)=u_0. \end{cases} \] Let $T\geq t_0$. For $n$ such that $S_n>T$, $u_n(t)$ is well defined for all $t\in [t_0,T]$, and moreover $\nh{u_n(t)}\leq C$. By Lemma \ref{th:wcf}, we have $u_n(T)\cvf u(T)$ in $H^1$. As \[ \nh{u_n(T)-\phib(T)-A_je^{-e_jT}Y_j^+(T)} \leq e^{-(e_j+\c)T}, \] we finally obtain, by weak convergence, $\nh{u(T)-\phib(T)-A_je^{-e_jT}Y_j^+(T)}\leq e^{-(e_j+\c)T}$. Thus $u$ is a solution of \eqref{eq:gKdV} which satisfies \eqref{eq:perturb}.
\end{proof}

\subsection{Proof of Proposition \ref{th:princbis}} \label{sec:preuveprinc}

The proof proceeds in several steps. For the sake of simplicity, we will drop the index $n$  for the rest of this section (except for $S_n$). As Proposition \ref{th:princbis} is proved for given $n$, this should not be a source of confusion. Hence we will write $u$ for $u_n$, $z$ for $z_n$, $\b$ for $\b_n$, etc. We possibly drop the first terms of the sequence $S_n$, so that, for all $n$, $S_n$ is large enough for our purposes.

From \eqref{eq:z2}, the equation satisfied by $z$ is \begin{equation} \begin{cases} \label{eq:z3} \partial_t z+\px[\px^2 z+p\phib^{p-1}z] + \px[\omega_1\cdot z] +\px[\omega(z)] = -\px\Omega,\\ z(S_n) = \sum_{k>j} b_k Y_k^+(S_n). \end{cases} \end{equation} Moreover, for all $k\in\unn$, we denote \[ \alpha_k^{\pm}(t) = \int z(t)\cdot Z_k^{\pm}(t). \] In particular, we have \[ \alpha_k^{\pm}(S_n) = \sum_{l>j} b_l \int Y_l^+(S_n)\cdot Z_k^{\pm}(S_n). \] Finally, we denote $\alpham(t)={(\alpha_k^-(t))}_{j<k\leq N}$.

\subsubsection{Modulated final data}

\begin{lem} \label{th:finaldata}
For $n\geq n_0$ large enough, the following holds. For all $\a\in\R^{N-j}$, there exists a unique $\b\in\R^{N-j}$ such that $\|\b\| \leq 2\|\a\|$ and $\alpham(S_n)=\a$.
\end{lem}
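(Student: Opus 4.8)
The plan is to solve the equation $\alphav^-(S_n)=\mathfrak{a}$ for $\b$ by recognizing it as a linear system whose matrix is a small perturbation of the identity, and then invert it by a fixed point / Neumann-series argument. Recall that by definition
\[
\alpha_k^-(S_n) = \sum_{l>j} b_l \int Y_l^+(S_n)\cdot Z_k^-(S_n),
\]
so writing $M_n = \big( \int Y_l^+(S_n) Z_k^-(S_n) \big)_{j<k,l\leq N}$, the requirement is exactly $M_n \b = \mathfrak{a}$. First I would compute the diagonal and off-diagonal entries of $M_n$. For the diagonal, $\int Y_k^+(S_n) Z_k^-(S_n)$: since $Y_k^\pm(t,x) = Y_{c_k}^\pm(x-c_kt-x_k)$ and likewise for $Z_k^-$, a change of variables kills the translation, so this integral equals $(Y_{c_k}^+, Z_{c_k}^-)$, which by Lemma \ref{th:Zc}(iv) (in its scaled form) equals $1$ for every $n$. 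So $M_n$ has all diagonal entries equal to $1$.

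Next I would bound the off-diagonal entries $\int Y_l^+(S_n) Z_k^-(S_n)$ for $l\neq k$. Here the two functions are centered at $c_l S_n + x_l$ and $c_k S_n + x_k$ respectively, which are separated by a distance $\geq (\min_{l\neq k}|c_l-c_k|) S_n - C \gtrsim \sigma_0 S_n$ for $S_n$ large. Using the exponential decay bounds of Lemma \ref{th:Zc}(ii) for $Y_{c_l}^+$ and $Z_{c_k}^-$ together with the standard convolution-type estimate for products of two exponentially localized bumps with well-separated centers — exactly the mechanism behind \eqref{eq:interact} — these entries are $O(e^{-\kappa \sigma_0 S_n})$ for some fixed $\kappa>0$, hence in particular $\leq C e^{-9\gamma S_n}$. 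Consequently $M_n = \mathrm{Id} + E_n$ with $\|E_n\|_{\mathrm{op}} \leq C e^{-9\gamma S_n} \leq \tfrac12$ for $n\geq n_0$ large enough (the operator norm being controlled by the finitely many entries, all of that size). Then $M_n$ is invertible with $M_n^{-1} = \sum_{m\geq 0} (-E_n)^m$, so $\b := M_n^{-1}\mathfrak{a}$ is the unique solution of $\alphav^-(S_n)=\mathfrak{a}$, and
\[
\|\b\| \leq \frac{\|\mathfrak{a}\|}{1-\|E_n\|_{\mathrm{op}}} \leq 2\|\mathfrak{a}\|
\]
once $\|E_n\|_{\mathrm{op}}\leq\tfrac12$. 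Uniqueness is immediate since $M_n$ is invertible.

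The only genuinely delicate point is the off-diagonal estimate, i.e. checking that the separation of the soliton centers at time $S_n$ really does force the cross terms to be exponentially small \emph{with a rate comfortably larger than $\gamma$}; this is where the precise definition of $\sigma_0$ in \eqref{gamma} is used, and it is the same computation that underlies \eqref{eq:interact}. Everything else — the change of variables giving the diagonal entries, and the Neumann series — is routine. One small caveat I would be careful about: the statement should be read for $\a$ ranging over a set on which the conclusion $\|\b\|\le 2\|\a\|$ is useful (it will later be applied with $\|\a\|$ of size $e^{-(e_j+2\c)S_n}$), but the inversion itself works for all $\a\in\R^{N-j}$ with the uniform bound above, so there is no restriction needed here.
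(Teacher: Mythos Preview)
Your proposal is correct and follows essentially the same approach as the paper: write $\alpham(S_n)=M_n\b$, observe that the diagonal entries of $M_n$ equal $1$ by Lemma~\ref{th:Zc}(iv) while the off-diagonal entries are exponentially small in $S_n$ by \eqref{eq:interact}, and invert $M_n=\mathrm{Id}+E_n$ via Neumann series to get $\|\b\|\leq 2\|\a\|$. The only cosmetic difference is that the paper simply quotes \eqref{eq:interact} (with rate $e^{-\c S_n}$, which is enough) rather than re-deriving the separation estimate.
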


\begin{proof}
Consider the linear application \[ \begin{array}{rrcl} \Phi~: &\R^{N-j} &\to &\R^{N-j}\\ &\b={(b_l)}_{j<l\leq N} &\mapsto &{\left( \sum_{l>j} b_l \int Y_l^+(S_n)Z_k^-(S_n) \right)}_{j<k\leq N}. \end{array} \] From the normalization of Lemma \ref{th:Zc}, its matrix in the canonical basis is \[ \mathrm{Mat}\,\Phi = \begin{pmatrix} 1& \int Y_{j+2}^+ Z_{j+1}^-(S_n)&\cdots& \int Y_{j+N}^+ Z_{j+1}^-(S_n) \\ \int Y_{j+1}^+ Z_{j+2}^-(S_n)&1&\cdots&\vdots \\ \vdots&\vdots&\ddots&\vdots \\ \int Y_{j+1}^+ Z_{j+N}^-(S_n)&\cdots&\cdots&1 \end{pmatrix}. \] But from \eqref{eq:interact}, we have, for $k\neq l$, \[ \left| \int Y_l^{\pm}Z_k^{\pm}(S_n) \right| \leq C_0e^{-\c S_n} \] with $C_0$ independent of $n$, and so by taking $n_0$ large enough, we have $\Phi = \mathrm{Id} +A_n$ where $\|A_n\|\leq \frac{1}{2}$. Thus $\Phi$ is invertible and $\|\Phi^{-1}\|\leq 2$. Finally, for a given $\a\in \R^{N-j}$, it is enough to define $\b$ by $\b=\Phi^{-1}(\a)$ to conclude the proof of Lemma \ref{th:finaldata}.
\end{proof}

\begin{claim} \label{Sn}
The following estimates at $S_n$ hold:
\begin{itemize}
\item $|\alpha_k^+(S_n)|\leq Ce^{-2\c S_n}\|\b\|$ for all $k\in\unn$,
\item $|\alpha_k^-(S_n)|\leq Ce^{-2\c S_n}\|\b\|$ for all $k\in \ient{1}{j}$,
\item $\nh{z(S_n)}\leq C\|\b\|$.
\end{itemize}
\end{claim}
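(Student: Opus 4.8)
The whole point is that the final data $z(S_n) = \sum_{k>j} b_k Y_k^+(S_n)$ is a linear combination of the $Y_k^+$ with only indices $k > j$, while we are testing against $Z_k^\pm$. There are three distinct regimes to separate, and in each the estimate comes from the exponential smallness \eqref{eq:interact} of the cross-interactions between distinct solitons.

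First I would bound $\nh{z(S_n)}$: writing $z(S_n) = \sum_{k>j} b_k Y_k^+(S_n)$ and using the pointwise bounds on $Y_k^+$ from Lemma \ref{th:Zc}(ii) (which give a uniform $H^1$ bound on each $Y_k^+(t)$, independent of $t$), the triangle inequality immediately yields $\nh{z(S_n)} \le C\sum_{k>j}|b_k| \le C\|\b\|$. Next, for the coefficients $\alpha_k^+(S_n)$ with $k \in \unn$ arbitrary: we have $\alpha_k^+(S_n) = \sum_{l>j} b_l (Y_l^+(S_n), Z_k^+(S_n))$. When $l \ne k$, the scalar product $(Y_l^+(S_n), Z_k^+(S_n))$ involves two bump functions centered at the well-separated points $c_l S_n + x_l$ and $c_k S_n + x_k$, so by the same argument that gives \eqref{eq:interact} (applied to $Y$'s and $Z$'s, as noted in the text right after \eqref{eq:interact}) it is $\le C e^{-10\c S_n}$. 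When $l = k$ — which can only happen if $k > j$, since $l$ ranges over indices $> j$ — we use the orthogonality $(Y_k^+, Z_k^+) = 0$ from Lemma \ref{th:Zc}(iii), so that term vanishes identically. Hence $|\alpha_k^+(S_n)| \le C e^{-10\c S_n}\sum_{l>j}|b_l| \le C e^{-2\c S_n}\|\b\|$, with room to spare. Finally, for $\alpha_k^-(S_n)$ with $k \in \ient{1}{j}$: here $\alpha_k^-(S_n) = \sum_{l>j} b_l (Y_l^+(S_n), Z_k^-(S_n))$, and since $k \le j < l$ we always have $l \ne k$, so \emph{every} term is a cross-interaction bounded by $C e^{-10\c S_n}$; thus $|\alpha_k^-(S_n)| \le C e^{-2\c S_n}\|\b\|$ as well. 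The restriction to $k \le j$ is exactly what is needed here: for $k > j$ the diagonal term $b_k (Y_k^+, Z_k^-) = b_k$ appears (by Lemma \ref{th:Zc}(iv)) and is only $O(\|\b\|)$, not exponentially small — consistent with $\alpham(S_n)$ being prescribed to equal $\a$ in Lemma \ref{th:finaldata}.

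There is no real obstacle here; the claim is essentially bookkeeping on top of the interaction estimate \eqref{eq:interact} and the orthogonality relations of Lemma \ref{th:Zc}. The only point requiring a little care is making sure the constant $C$ is independent of $n$: this is automatic because the pointwise bounds on $Y_k^\pm, Z_k^\pm$ are uniform in $t$, and the separation rate $10\c$ in the cross-interaction bound beats the target rate $2\c$ regardless of $S_n$. One should also note that at this stage $\b$ is not yet controlled in terms of anything — Claim \ref{Sn} is stated for a generic $\b$, and it is only combined with the bound $\|\b\| \le 2e^{-(e_j+2\c)S_n}$ coming later from the topological/shooting argument to turn these into genuinely small quantities; so the proof of the claim itself should not attempt to invoke any smallness of $\|\b\|$.
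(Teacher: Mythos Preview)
Your argument is correct and is exactly the computation the paper leaves implicit (no proof of Claim~\ref{Sn} is given in the text): the three estimates follow directly from the expression $z(S_n)=\sum_{l>j}b_lY_l^+(S_n)$, the orthogonality $(Y_k^+,Z_k^+)=0$ of Lemma~\ref{th:Zc}(iii), and the interaction bound~\eqref{eq:interact} extended to $Y^\pm,Z^\pm$. Your observation that the restriction $k\leq j$ in the second item is sharp (the diagonal term $(Y_k^+,Z_k^-)=1$ would appear for $k>j$) is also the right way to understand why the claim is stated as it is.
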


\subsubsection{Equations on $\alpha_k^{\pm}$}

Let $t_0>0$ independent of $n$ to be determined later in the proof, $\a\in B_{\R^{N-j}}(e^{-(e_j+2\c)S_n})$ to be chosen, $\b$ be given by Lemma \ref{th:finaldata} and $u$ be the corresponding solution of \eqref{un}. We now define the maximal time interval $[\ta,S_n]$ on which suitable exponential estimates hold.

\begin{defi} \label{def:ta}
Let $\ta$ be the infimum of $T\geq t_0$ such that for all $t\in [T,S_n]$, both following properties hold: \begin{equation} \label{eq:ta} e^{(e_j+\c)t}z(t) \in B_{H^1}(1) \quad \m{and} \quad e^{(e_j+2\c)t} \alpham(t) \in B_{\R^{N-j}}(1). \end{equation}
\end{defi}

Observe that Proposition \ref{th:princbis} is proved if for all $n$, we can find $\a$ such that $\ta = t_0$. The rest of the proof is devoted to prove the existence of such a value of $\a$.

\bigskip

First, we prove the following estimate on $\alpha_k^{\pm}$.

\begin{claim}
For all $k\in\unn$ and all $t\in [\ta,S_n]$, \begin{equation} \label{eq:alpha} \left|\dt \alpha_k^{\pm}(t) \mp e_k\alpha_k^{\pm}(t) \right| \leq C_0e^{-4\c t}\nh{z(t)} +C_1\nh{z(t)}^2 +C_2e^{-(e_j+4\c)t}. \end{equation}
\end{claim}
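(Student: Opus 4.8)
The plan is to differentiate the scalar quantity $\alpha_k^{\pm}(t) = \int z(t)\cdot Z_k^{\pm}(t)$ in time, using the PDE \eqref{eq:z3} for $\partial_t z$ and the transport structure of $Z_k^{\pm}$, and then to isolate the linear term $\pm e_k\alpha_k^{\pm}$ coming from the spectral identity in Lemma \ref{th:Zc}(i), estimating everything else as an error. Concretely, I would write $\dt \alpha_k^{\pm} = \int \partial_t z\cdot Z_k^{\pm} + \int z\cdot \partial_t Z_k^{\pm}$. For the second term, since $Z_k^{\pm}(t,x) = Z_{c_k}^{\pm}(x-c_kt-x_k)$, one has $\partial_t Z_k^{\pm} = -c_k\px Z_k^{\pm}$. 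For the first term, substitute $\partial_t z = -\px[\px^2 z + p\phib^{p-1}z] - \px[\omega_1 z] - \px[\omega(z)] - \px\Omega$ and integrate by parts to move the outer $\px$ onto $Z_k^{\pm}$, so that each contribution pairs $z$ (or $\omega_1 z$, $\omega(z)$, $\Omega$) against $\px Z_k^{\pm}$.

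The next step is the algebraic heart of the computation: combining the $-\int z\px(\px^2 Z_k^{\pm})$, the $-\int p\phib^{p-1}z\,\px Z_k^{\pm}$, and the $-c_k\int z\,\px Z_k^{\pm}$ terms. I would replace $\phib^{p-1}$ by $R_k^{p-1}$ up to an error $\phib^{p-1}-R_k^{p-1}$, which is controlled by \eqref{eq:cmm} and the interaction estimate \eqref{eq:interact} (all other solitons $R_l$, $l\neq k$, being exponentially far from the support of $Z_k^{\pm}$, and $\phib-R$ being $O(e^{-4\c t})$); here the exponential localization of $Z_k^{\pm}$ from Lemma \ref{th:Zc}(ii) is essential. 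The main surviving term is then $\int z\cdot(-\px^3 Z_k^{\pm} + c_k\px Z_k^{\pm} - p\px(R_k^{p-1}Z_k^{\pm}))$-type expression, which I rearrange so as to recognize $L_{c_k}\px Z_k^{\pm}$ or equivalently use $L_k(\px Z_k^{\pm}) = \mp e_k Z_k^{\pm}$ from Lemma \ref{th:Zc}(i): this is precisely what produces the $\pm e_k\alpha_k^{\pm}$ term. The remaining three error terms are estimated as follows: the $\phib^{p-1}-R_k^{p-1}$ discrepancy times $z$ gives the $C_0 e^{-4\c t}\nh{z(t)}$ bound (using $\nhs{\phib-R}\lesssim e^{-4\c t}$, $\int R_lR_k\lesssim e^{-10\c t}$, and Cauchy-Schwarz with the localization of $\px Z_k^{\pm}$); the source term $\int\Omega\,\px Z_k^{\pm}$ gives the $C_2 e^{-(e_j+4\c)t}$ bound by \eqref{Omega}; the $\omega_1 z$ term contributes $\lesssim e^{-e_jt}\nh{z}$ which is absorbed into $C_0e^{-4\c t}\nh{z}$ since $e_j\geq\sigma_0^{3/2}\geq 10^6\c > 4\c$; and the nonlinear term $\int\omega(z)\,\px Z_k^{\pm}$ is $O(\nh{z}^2)$ by the pointwise bound $|\omega(z)|\lesssim (|\phib|+e^{-e_jt}|Y_j^+|+|z|)^{p-2}z^2$ together with the $H^1\hookrightarrow L^\infty$ embedding and the boundedness of $\phib$, $Y_j^+$, giving the $C_1\nh{z(t)}^2$ term.

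The step I expect to be the main obstacle is the careful bookkeeping in the algebraic rearrangement that extracts $\mp e_k Z_k^{\pm}$ exactly: one must be sure that the potential seen by $Z_k^{\pm}$ is genuinely $R_k^{p-1}$ and not $\phib^{p-1}$ or $R^{p-1}$, and that all the cross terms (involving $p\phib^{p-1}$ versus $pR_k^{p-1}$, and the interaction of $Z_k^\pm$ with solitons $R_l$ for $l\neq k$) are indeed of size $e^{-4\c t}$ or smaller uniformly in $n$ and on the whole interval $[T(\a),S_n]$ — this is where the precise definition \eqref{gamma} of $\sigma_0$ and $\c$, and the fact that $e_k = c_k^{3/2}e_0$ with $\eta_0^{2/3}c_1\geq\sigma_0$, all have to be used in concert. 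Once the identity $\dt\alpha_k^{\pm} = \pm e_k\alpha_k^{\pm} + (\text{errors})$ is established with the errors bounded as above, inequality \eqref{eq:alpha} follows immediately; no smallness of $z$ beyond boundedness is needed at this stage, so Definition \ref{def:ta} is not even invoked here (it will be used in the subsequent steps to close the bootstrap).
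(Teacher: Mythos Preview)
Your proposal is correct and follows essentially the same route as the paper: differentiate $\alpha_k^{\pm}$, substitute \eqref{eq:z3} and $\partial_t Z_k^{\pm}=-c_k\px Z_k^{\pm}$, integrate by parts onto $Z_{kx}^{\pm}$, split off $p(\phib^{p-1}-R_k^{p-1})$ as an $O(e^{-4\c t})$ error, and use $L_{c_k}(\px Z_{c_k}^{\pm})=\mp e_kZ_{c_k}^{\pm}$ together with the self-adjointness of $L_{c_k}$ to extract $\pm e_k\alpha_k^{\pm}$, with the $\omega_1 z$, $\omega(z)$, $\Omega$ contributions bounded exactly as you indicate. Your observation that Definition~\ref{def:ta} is not needed here is also in line with the paper's argument.
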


\begin{proof}
Using the equation of $z$ \eqref{eq:z3}, we first compute \begin{align*} \dt \alpha_k^{\pm}(t) &= \int z_t Z_k^{\pm} +\int zZ_{kt}^{\pm}\\ &= \int (z_{xx}+p\phib^{p-1}z)Z_{kx}^{\pm} +\int \omega_1zZ_{kx}^{\pm} + \int \omega(z)Z_{kx}^{\pm} + \int \Omega Z_{kx}^{\pm} -c_k\int zZ_{kx}^{\pm}\\ &= \int (z_{xx}-c_kz+pR_k^{p-1}z)Z_{kx}^{\pm} +p\int (\phib^{p-1}-R_k^{p-1})zZ_{kx}^{\pm} +\int (\omega_1z+\omega(z)+\Omega)Z_{kx}^{\pm}. \end{align*} But from (i) of Lemma \ref{th:Zc}, we have \begin{align*} \int (z_{xx} -c_kz+pR_k^{p-1}z)Z_{kx}^{\pm} &= (-L_{c_k}z(t,\cdot+c_kt),\px Z_{c_k}^{\pm})\\ &= (z(t,\cdot+c_kt),-L_{c_k}(\px Z_{c_k}^{\pm})) = \pm e_k(z(t,\cdot+c_kt),Z_{c_k}^{\pm}) = \pm e_k \alpha_k^{\pm}, \end{align*} and from \eqref{eq:cmm} and \eqref{Omega}, we have the following estimates:
\begin{itemize}
\item $|\int (\phib^{p-1}-R_k^{p-1})zZ_{kx}^{\pm}|\leq C\nli{\phib-R}\nli{z} +Ce^{-4\c t}\nld{z} \leq Ce^{-4\c t}\nh{z}$,
\item $|\int \omega_1zZ_{kx}^{\pm}| \leq \nli{\omega_1}\nli{z}\nlu{Z_{kx}^{\pm}} \leq Ce^{-e_jt}\nh{z} \leq Ce^{-4\c t}\nh{z}$,
\item $|\int \omega(z)Z_{kx}^{\pm}|\leq C\nld{z}^2 \leq C\nh{z}^2$,
\item $|\int \Omega Z_{kx}^{\pm}| \leq C\nli{\Omega} \leq Ce^{-(e_j+4\c)t}$,
\end{itemize}
which conclude the proof of the claim.
\end{proof}

\subsubsection{Control of the stable directions}

We estimate here $\alpha_k^+(t)$ for all $k\in\unn$ and $t\in [\ta,S_n]$. From \eqref{eq:alpha} and \eqref{eq:ta}, we have \[ \left| \dt \alpha_k^+(t) -e_k\alpha_k^+(t)\right| \leq C_0e^{-(e_j+5\c)t} +C_1e^{-2(e_j+\c)t} +C_2e^{-(e_j+4\c)t} \leq K_2e^{-(e_j+4\c)t}. \] Thus $|{(e^{-e_ks}\alpha_k^+(s))}'|\leq K_2e^{-(e_j+e_k+4\c)s}$, and so by integration on $[t,S_n]$ we get $|e^{-e_kS_n}\alpha_k^+(S_n)-e^{-e_kt}\alpha_k^+(t)| \leq K_2e^{-(e_j+e_k+4\c)t}$ and so \[ |\alpha_k^+(t)|\leq e^{e_k(t-S_n)}|\alpha_k^+(S_n)|+K_2e^{-(e_j+4\c)t}. \] But from Claim \ref{Sn} and Lemma \ref{th:finaldata}, we have \begin{align*} e^{e_k(t-S_n)}|\alpha_k^+(S_n)| \leq |\alpha_k^+(S_n)| &\leq Ce^{-2\c S_n}\|\b\|\\ &\leq Ce^{-2\c S_n} e^{-(e_j+2\c)S_n} \leq K_2e^{-(e_j+4\c)S_n} \leq K_2e^{-(e_j+4\c)t}, \end{align*} and so finally \begin{equation} \label{eq:alphap} \forall k\in\unn, \forall t\in[\ta,S_n],\quad |\alpha_k^+(t)|\leq K_2e^{-(e_j+4\c)t}. \end{equation}

\subsubsection{Control of the unstable directions for $k\leq j$}

We estimate here $\alpha_k^-(t)$ for all $k\in\ient{1}{j}$ and $t\in [\ta,S_n]$. Note first that, as in the previous paragraph, we get for all $k\in\unn$ and $t\in [\ta,S_n]$, \begin{equation} \label{eq:alpham} \left| \dt \alpha_k^-(t)+e_k\alpha_k^-(t)\right| \leq K_2e^{-(e_j+4\c)t}. \end{equation} Now suppose $k\leq j$, which implies $e_k\leq e_j$. Since $|{(e^{e_ks}\alpha_k^-(s))}'|\leq K_2e^{(e_k-e_j-4\c)s}$, we obtain, by integration on $[t,S_n]$, \[ |\alpha_k^-(t)|\leq e^{e_k(S_n-t)}|\alpha_k^-(S_n)| +K_2e^{-(e_j+4\c)t}. \] But again from Claim \ref{Sn} and Lemma \ref{th:finaldata}, we have \begin{align*} e^{e_k(S_n-t)}|\alpha_k^-(S_n)| &\leq K_2 e^{e_k(S_n-t)}e^{-2\c S_n}e^{-(e_j+2\c)S_n} = K_2 e^{e_k(S_n-t)}e^{-(e_j+4\c)S_n}\\ &\leq K_2e^{(S_n-t)(e_k-e_j)}e^{-e_jt}e^{-4\c S_n} \leq K_2e^{-(e_j+4\c)t}, \end{align*} and so finally \begin{equation} \label{eq:alphambis} \forall k\in\ient{1}{j}, \forall t\in[\ta,S_n],\quad |\alpha_k^-(t)|\leq K_2e^{-(e_j+4\c)t}. \end{equation}

\subsubsection{Monotonicity property of the energy} \label{sec:monotonicity}

We follow here the same strategy as in \cite[Section 4]{martel:Nsoliton} to estimate the energy backwards. Since calculations are long and technical, we refer to \cite{martel:Nsoliton} for more details.

We define the following function \[ \psi(x)=\frac{2}{\pi}\arctan(\exp(-\sqrt{\sigma_0}x/2)) \] so that $\lim_{+\infty} \psi=0$, $\lim_{-\infty} \psi=1$, and for all $x\in\R$, $\psi(-x)=1-\psi(x)$. Note that by a direct calculation, we have $|\psi'''(x)|\leq \frac{\sigma_0}{4}|\psi'(x)|$. Moreover, we set \[ h(t,x) = \frac{1}{c_N} + \sum_{k=1}^{N-1} \left( \frac{1}{c_k}-\frac{1}{c_{k+1}}\right) \psi\left(x- \frac{c_k+c_{k+1}}{2}t -\frac{x_k+x_{k+1}}{2} \right). \] Observe that the function $h$ takes values close to $\frac{1}{c_k}$ for $x$ close to $c_kt+x_k$, and has large variations only in regions far away from the solitons (for instance we have, for all $k\in\unn$ and $t\geq 0$, $\nli{R_k(t)h_x(t)}\leq Ce^{-4\c t}$). We also define a quantity related to the energy for $z$: \[ H(t) = \int \left\{ \Big( z_x^2(t,x)-F(t,z(t,x))\Big)h(t,x)+z^2(t,x)\right\} dx \] where \[ F(t,z) = 2\left[ \frac{{(\phib+v_j+z)}^{p+1}}{p+1}-\frac{{(\phib+v_j)}^{p+1}}{p+1} -{(\phib+v_j)}^pz \right], \] and $v_j(t,x)=A_je^{-e_jt}Y_j^+(t,x)$.

\begin{lem} \label{th:dHdt}
For all $t\in [\ta,S_n]$, \[ \frac{dH}{dt}(t) \geq -C_0\nh{z(t)}^3 -C_1e^{-2\c t}\nh{z(t)}^2 -C_2 e^{-(e_j+3\c)t}\nh{z(t)}. \]
\end{lem}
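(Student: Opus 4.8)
The plan is to differentiate $H(t)$ directly using the equation \eqref{eq:z3} for $z$ and the explicit form of $h$, and then control each of the resulting terms by the quantities appearing in the statement. First I would compute $\frac{dH}{dt}$ as a sum of three groups of terms: (a) the contribution of $\partial_t z$, which after integrating by parts against $(z_x^2 - F)h$ and $z^2$ and using $\partial_t z = -\px[\px^2 z + p\phib^{p-1}z] - \px[\omega_1 z] - \px[\omega(z)] + (-\px\Omega)$ produces the ``conservative'' part; (b) the contribution of $\partial_t h$, which is explicitly $-\frac{c_k+c_{k+1}}{2}\psi'$-type terms, hence supported far from the solitons; and (c) the contribution of $\partial_t F$, coming from the time-dependence of $\phib + v_j$ inside $F$. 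The key algebraic miracle, exactly as in \cite[Section 4]{martel:Nsoliton}, is that the leading quadratic terms in the conservative part combine (after the integrations by parts that move $\px$ around, using $h_x \geq 0$ and the pointwise bound $|\psi'''| \leq \frac{\sigma_0}{4}|\psi'|$) into an expression of the form $\int (3 z_x^2 + (\text{something})z^2) h_x + \frac{1}{2}\int z^2 h_{xxx} + \cdots$, whose dominant part is $\geq 0$ because $h_x \geq 0$ dominates $h_{xxx}$ by the choice of $\psi$ and $\sigma_0$. So the ``good'' terms have a favorable sign and can simply be discarded when proving the lower bound.

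The remaining terms are the error terms, and each must be shown to fit into the budget $-C_0\nh{z}^3 - C_1 e^{-2\c t}\nh{z}^2 - C_2 e^{-(e_j+3\c)t}\nh{z}$. I would organize them as follows. The genuinely cubic (and higher) terms in $z$ — coming from $\omega(z)$ and from the part of $F$ that is cubic in $z$ — are bounded by $C\nh{z}^3$ using Sobolev embedding $H^1 \hookrightarrow L^\infty$ and the exponential localization of $h_x$; these give the $-C_0\nh{z}^3$ term. The terms that are quadratic in $z$ but carry a factor measuring the distance of $\phib+v_j$ from a single soliton in the region where $h_x$ lives — i.e. terms involving $h_x$ times $(\phib^{p-1} - \text{sum of } R_k^{p-1})$, together with the $\omega_1$-contributions and the $\partial_t h$ / $\partial_t F$ contributions — are estimated using \eqref{eq:cmm}, \eqref{eq:interact}, $\nli{R_k h_x} \leq Ce^{-4\c t}$ and $\|\omega_1(t)\|_{H^s} \leq C_s e^{-e_j t}$; since $e_j \geq \c$ and $4\c \geq 2\c$, all of these are $\leq C e^{-2\c t}\nh{z}^2$. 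Finally, the terms linear in $z$ come only from the source term $\Omega$: the contribution of $-\px\Omega$ gives, after integrating by parts, something like $\int \Omega \px(\cdots)$ which is bounded by $C\nhs{\Omega}\nh{z} \leq Ce^{-(e_j+4\c)t}\nh{z} \leq C_2 e^{-(e_j+3\c)t}\nh{z}$ by \eqref{Omega}. (One also has to handle the time-derivative of $v_j$ inside $F$, but $\partial_t v_j$ is $O(e^{-e_j t})$ and localized, giving again a contribution absorbed into the $e^{-(e_j+3\c)t}\nh{z}$ term, possibly after using $\nh{z} \leq C$ on $[\ta,S_n]$ from \eqref{eq:ta}.)

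I would then assemble the estimate: discard the nonnegative ``monotonicity'' bulk term, and collect the three types of errors into the stated bound. A few technical points need care. One must justify all the integrations by parts, which requires enough decay/regularity of $z$; this is where \eqref{eq:cmm} (giving $\phib$ smooth with controlled $H^s$ norms for all $s$) and the $H^s$ smoothing of the flow are used, exactly as in \cite{martel:Nsoliton}. One must also be careful that the cutoff $h$ is bounded between $1/c_N$ and $1/c_1$ and that $h_t$ has the right sign structure relative to the dominant term; this is precisely the content of the choice $\psi(x) = \frac{2}{\pi}\arctan(\exp(-\sqrt{\sigma_0}x/2))$ and $|\psi'''| \leq \frac{\sigma_0}{4}|\psi'|$.

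The main obstacle I expect is the bookkeeping in step (a): correctly identifying which combinations of quadratic terms cancel or form the sign-definite ``virial-type'' quantity, and verifying that the leftover quadratic pieces all carry an exponentially small prefactor (rather than being merely bounded). This is the step the authors explicitly flag as ``long and technical'' and defer to \cite[Section 4]{martel:Nsoliton}; the new ingredient here, absent in the pure multi-soliton case, is the extra drift term $v_j = A_j e^{-e_j t}Y_j^+$ sitting inside $F$ and inside the nonlinearity, but since it is itself exponentially small and localized near the $j$-th soliton, it only contributes to the $e^{-2\c t}\nh{z}^2$ and $e^{-(e_j+3\c)t}\nh{z}$ error classes and does not disturb the sign of the dominant term.
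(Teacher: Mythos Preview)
Your overall strategy matches the paper's: differentiate $H$, substitute the equation for $z$ (in the paper written as $z_t + {[z_{xx} + (\phib+v_j+z)^p-(\phib+v_j)^p]}_x = -\Omega_x$), integrate by parts, isolate a sign-definite bulk, and sort the remainder into cubic, exponentially-damped quadratic, and $\Omega$-driven linear errors. The classification of error terms and the use of \eqref{eq:cmm}, \eqref{eq:interact}, \eqref{Omega} and the $\omega_1$ bound are all correct.

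There is, however, a genuine sign error in your description of the ``good'' term. With the paper's choice of $\psi$ and $h$, one has $\psi' < 0$ and each coefficient $\frac{1}{c_k}-\frac{1}{c_{k+1}} > 0$, hence $h_x < 0$ (not $h_x \geq 0$). The positivity of the dominant contribution comes from two separate mechanisms: the term $-\int {[z_{xx}+(\phib+v_j+z)^p-(\phib+v_j)^p]}^2 h_x - 2\int z_{xx}^2 h_x$ is nonnegative precisely because $h_x < 0$, while the term $\int (z_x^2 - F(z))h_t + \int z_x^2 h_{xxx}$ is handled using $h_t \geq \sigma_0 |h_x| \geq 4|h_{xxx}|$ (note $h_t > 0$), leaving only $-\int |F(z)| h_t$ to bound. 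Your expression ``$\int(3z_x^2 + \ldots)h_x \geq 0$ because $h_x \geq 0$'' has the sign backwards and conflates these two pieces; taken literally it would give the wrong inequality.

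One smaller point: the $\partial_t v_j$ contribution $-2\int h v_{jt}[(\phib+v_j+z)^p-(\phib+v_j)^p-p(\phib+v_j)^{p-1}z]$ is at least \emph{quadratic} in $z$ (the bracket is a second-order Taylor remainder), so it belongs to the $C e^{-2\c t}\nh{z}^2$ class via $\nli{v_{jt}} \leq C e^{-e_j t} \leq C e^{-2\c t}$, not to the linear-in-$z$ class as you suggest. This does not affect the final bound, but your placement of it is off.
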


\begin{proof}
Since $\frac{\partial F}{\partial z} = 2[{(\phib+v_j+z)}^p-{(\phib+v_j)}^p]$, we can first compute \begin{multline*} \frac{dH}{dt} = \int (z_x^2-F(z))h_t -2\int z_t\big[{(\phib+v_j+z)}^p-{(\phib+v_j)}^p]h +2\int z_{xt}z_xh +2\int z_tz\\ -2\int {(\phib+v_j)}_t\Big[{(\phib+v_j+z)}^p-{(\phib+v_j)}^p-p{(\phib+v_j)}^{p-1}z\Big]h. \end{multline*} Moreover $2\int z_{xt}z_xh = -2\int z_t(z_{xx}h+z_xh_x)$, thus \begin{multline*} \frac{dH}{dt} = \int (z_x^2-F(z))h_t -2\int z_t\big[z_{xx}+{(\phib+v_j+z)}^p-{(\phib+v_j)}^p]h +2\int z_t(z-z_xh_x)\\ -2\int {(\phib+v_j)}_t\Big[{(\phib+v_j+z)}^p-{(\phib+v_j)}^p-p{(\phib+v_j)}^{p-1}z\Big]h. \end{multline*} Now we replace $z_t$ thanks to the equation that it satisfies, which can be written, from \eqref{eq:z1}, \[ z_t +{\Big[ z_{xx} +{(\phib+v_j+z)}^p -{(\phib+v_j)}^p \Big]}_x = -\Omega_x. \] Using multiple integrations by parts, we finally obtain \begin{align} \frac{dH}{dt} &= \int (z_x^2-F(z))h_t +\int z_x^2 h_{xxx} \label{eq:dh1}\\ &+2\int z_xh_x{\Big[{(\phib+v_j+z)}^p -{(\phib+v_j)}^p \Big]}_x \label{eq:dh2}\\ &-2\int z{\Big[{(\phib+v_j+z)}^p-{(\phib+v_j)}^p\Big]}_x -2\int \phib_th\Big[ {(\phib+v_j+z)}^p -{(\phib+v_j)}^p -p{(\phib+v_j)}^{p-1}z\Big] \label{eq:dh3}\\ &-2\int z\Omega_x +2\int zh\Omega_{xxx} +2\int zh_x\Omega_{xx} +2\int h\Omega_x\Big[ {(\phib+v_j+z)}^p-{(\phib+v_j)}^p\Big] \label{eq:dh4}\\ &-2\int hv_{jt}\Big[ {(\phib+v_j+z)}^p-{(\phib+v_j)}^p -p{(\phib+v_j)}^{p-1}z\Big] \label{eq:dh5}\\ &-\int {\Big[z_{xx}+{(\phib+v_j+z)}^p-{(\phib+v_j)}^p\Big]}^2h_x -2\int z_{xx}^2h_x. \label{eq:dh6} \end{align} To conclude, we estimate each term of this equality:
\begin{itemize}
\item First note that $\eqref{eq:dh6}\geq 0$ since $h_x<0$.
\item \eqref{eq:dh1}: By the expression of $h$ and $|\psi'''|\leq \frac{\sigma_0}{4}|\psi'|$, we see after direct calculation that $h_t\geq \sigma_0|h_x|\geq 4|h_{xxx}|$, thus \[ \eqref{eq:dh1} \geq \frac{3}{4}\int z_x^2 h_t -\int F(z)h_t \geq -\int |F(z)|h_t. \] Moreover, since $\nli{Rh_t}\leq Ce^{-4\c t}$, and \begin{align*} |F(z)| &\leq C{|z|}^{p+1} +Cz^2{|\phib+v_j|}^{p-1} \leq C\nli{z}^{p-1}z^2+Cz^2({|\phib|}^{p-1}+{|v_j|}^{p-1})\\ &\leq C\nli{z}z^2 +Cz^2{|\phib-R|}^{p-1} +Cz^2{|R|}^{p-1} +Cz^2\nli{v_j}, \end{align*} then $\int |F(z)|h_t \leq C_0\nh{z}^3 +C_1e^{-2\c t}\nh{z}^2$.
\item For \eqref{eq:dh5}, first note that $\nli{v_{jt}}\leq Ce^{-e_jt}$, and so \[ |\eqref{eq:dh5}|\leq C\nli{v_{jt}}\nld{z}^2 \leq C_1e^{-2\c t}\nh{z}^2. \]
\item $|\eqref{eq:dh4}|\leq C{\|\Omega\|}_{H^3} \nld{z} \leq C_2e^{-(e_j+4\c)t}\nh{z}$ by \eqref{Omega}.
\item To estimate \eqref{eq:dh2}, we develop it as \begin{align*} \frac{1}{2}\eqref{eq:dh2} &= \int z_xh_x\sum_{k=1}^p \binom{p}{k}{\left[{(\phib+v_j)}^{p-k}z^k\right]}_x = \sum_{k=1}^{p-1} \binom{p}{k}k\int z_x^2 z^{k-1} {(\phib+v_j)}^{p-k}h_x\\ &+ \sum_{k=1}^{p-1} \binom{p}{k} (p-k)\int {(\phib+v_j)}_x{(\phib+v_j)}^{p-k-1}h_xz_xz^k + p\int z_x^2z^{p-1}h_x. \end{align*} Since $|\phib_xh_x|+|\phib h_x|\leq Ce^{-2\c t}$ and $|v_{jx}|+|v_j|\leq Ce^{-e_jt}$, then \[ |\eqref{eq:dh2}|\leq C_1e^{-2\c t}\nh{z}^2 + C_0\nh{z}^3. \]
\item We finally estimate \eqref{eq:dh3} to conclude. The key point to control it is that locally around $x=c_kt+x_k$, $\phib$ behaves as a solitary wave of speed $c_k$. More precisely, we strongly use the estimate $\nli{\phib_t h+\phib_x}\leq Ce^{-2\c t}$, proved in \cite{martel:Nsoliton}. Note that the proof uses the $H^4$ norm of the difference $\phib-R$, \emph{i.e.} \eqref{eq:cmm}. Now, we compute \begin{multline*} -\frac{1}{2}\eqref{eq:dh3} = \int z{\Big[ {(\phib+v_j+z)}^p -{(\phib+v_j)}^p -p{(\phib+v_j)}^{p-1}z\Big]}_x\\ +\int \phib_th\Big[ {(\phib+v_j+z)}^p-{(\phib+v_j)}^p -p{(\phib+v_j)}^{p-1}z- \frac{p(p-1)}{2}{(\phib+v_j)}^{p-2}z^2 \Big]\\ -p\int {(\phib+v_j)}^{p-1} z_xz +\frac{p(p-1)}{2}\int \phib_th{(\phib+v_j)}^{p-2}z^2 = \mathbf{I}+ \mathbf{II}+ \mathbf{III}+ \mathbf{IV}. \end{multline*} First notice that $|\mathbf{I}|+|\mathbf{II}|\leq C_0\nh{z}^3$. Moreover, an integration by parts gives \begin{align*} \mathbf{III}+ \mathbf{IV} &= \frac{p}{2}\int z^2(p-1)(\phib_x+v_{jx}){(\phib+v_j)}^{p-2} +\frac{p(p-1)}{2}\int \phib_th{(\phib+v_j)}^{p-2}z^2\\ &= \frac{p(p-1)}{2}\int z^2{(\phib+v_j)}^{p-2}(\phib_x+\phib_th) +\frac{p(p-1)}{2}\int z^2v_{jx}{(\phib+v_j)}^{p-2}, \end{align*} thus \[ |\mathbf{III}+\mathbf{IV}| \leq C\nli{\phib_x+\phib_th}\nld{z}^2 +C\nli{v_{jx}}\nld{z}^2 \leq Ce^{-2\c t}\nh{z}^2 + Ce^{-e_jt}\nh{z}^2, \] and so finally $|\eqref{eq:dh3}|\leq C_0\nh{z}^3 +C_1e^{-2\c t}\nh{z}^2$. \qedhere
\end{itemize}
\end{proof}

We can now prove that, for all $t\in [\ta,S_n]$, \begin{equation} \label{eq:Lh} \int \Big(z_x^2(t)-pR^{p-1}(t) z^2(t)\Big)h(t)+z^2(t) \leq K_1e^{-2(e_j+2\c)t}. \end{equation} Indeed, from Lemma \ref{th:dHdt} and estimates \eqref{eq:ta}, we deduce that, for all $t\in [\ta,S_n]$, \[ \frac{dH}{dt}(t) \geq -C_0e^{-3(e_j+\c)t} -C_1e^{-2\c t}e^{-2(e_j+\c)t} -C_2e^{-(e_j+3\c)t}e^{-(e_j+\c)t} \geq -K_1e^{-2(e_j+2\c)t}. \] Thus by integration on $[t,S_n]$, we obtain $H(S_n)-H(t)\geq -K_1e^{-2(e_j+2\c)t}$, and so \[ H(t)\leq H(S_n)+K_1e^{-2(e_j+2\c)t}. \] But from Claim \ref{Sn} and Lemma \ref{th:finaldata}, we have \begin{align*} H(S_n) &\leq |H(S_n)|\leq C\nh{z(S_n)}^2 \leq C{\|\b\|}^2 \leq C{\|\a\|}^2\\ &\leq Ce^{-2(e_j+2\c)S_n} \leq Ce^{-2(e_j+2\c)t}, \end{align*} and so \[ \forall t\in [\ta,S_n],\quad H(t)\leq K_1e^{-2(e_j+2\c)t}. \] Finally, since \begin{align*} |F(z)-pR^{p-1}z^2| &\leq |F(z)-p{(\phib+v_j)}^{p-1}z^2| +p|({(\phib+v_j)}^{p-1}-\phib^{p-1})z^2| +p|(\phib^{p-1}-R^{p-1})z^2|\\ &\leq C_0{|z|}^3 +C_1e^{-2\c t}{|z|}^2, \end{align*} we easily obtain \eqref{eq:Lh} from the definition of $H$.

\subsubsection{Control of the $R_{kx}$ directions} \label{sec:controlRkx}

Define $\displaystyle \zt(t) = z(t)+\sum_{k=1}^N a_k(t)R_{kx}(t)$, where $a_k(t) = -\frac{\int z(t)R_{kx}(t)}{\int (Q'_{c_k})^2}$, so that by \eqref{eq:interact} \begin{equation} \label{eq:Rkx} \left| \int \zt R_{kx} \right| \leq Ce^{-\c t}\nh{z} \end{equation} and there exist $C_1,C_2>0$ such that \begin{equation} \label{eq:zzt} C_1\nh{z}\leq \nh{\zt}+\sum_{k=1}^N |a_k| \leq C_2\nh{z}. \end{equation} As in \cite[Section 4]{martel:Nsoliton}, we find \[ \int \left[ (\zt_x^2 -pR^{p-1}\zt^2)h+\zt^2\right] \leq \int \left[ (z_x^2-pR^{p-1}z^2)h+z^2\right] +Ce^{-2\c t}\nh{z}^2. \] Using \eqref{eq:Lh}, we deduce that \begin{equation} \label{eq:Lht} \forall t\in [\ta,S_n],\quad  \int \Big(\zt_x^2(t) -pR^{p-1}(t)\zt^2(t)\Big)h(t)+\zt^2(t) \leq K_1e^{-2(e_j+2\c)t}. \end{equation}

Now, from the property of coercivity (vi) in Lemma \ref{th:Zc}, and since $h$ takes values close to $\frac{1}{c_k}$ for $x$ close to $c_kt+x_k$, we obtain, by simple localization arguments (see \cite[Lemma 4]{martel:tsai} for details), that there exists $\lambda_2>0$ such that \[ \int (\zt_x^2-pR^{p-1}\zt^2)h+\zt^2 \geq \lambda_2\nh{\zt}^2 -\frac{1}{\lambda_2}\sum_{k=1}^N \left[ {\left( \int \zt R_{kx} \right)}^2 + {\left( \int \zt Z_k^+ \right)}^2 + {\left( \int \zt Z_k^- \right)}^2 \right]. \] Moreover, gathering all previous estimates, we have for all $t\in [\ta,S_n]$:
\begin{enumerate}[(a)]
\item For all $k\in\unn$, ${\left( \int \zt R_{kx}\right)}^2 \leq Ce^{-2\c t}\nh{z}^2 \leq Ce^{-2(e_j+2\c)t}$ by \eqref{eq:Rkx}.
\item For all $k\in\unn$, ${\left( \int \zt Z_k^+\right)}^2 \leq 2{(\alpha_k^+)}^2+ Ce^{-2\c t}\nh{z}^2 \leq Ce^{-2(e_j+2\c)t}$ by (iii) of Lemma \ref{th:Zc}, \eqref{eq:alphap} and \eqref{eq:interact}.
\item For all $k\in \ient{1}{j}$, ${\left( \int \zt Z_k^-\right)}^2 \leq 2{(\alpha_k^-)}^2+ Ce^{-2\c t}\nh{z}^2 \leq Ce^{-2(e_j+2\c)t}$ by (iii) of Lemma \ref{th:Zc}, \eqref{eq:alphambis} and \eqref{eq:interact}.
\item For all $k\in \ient{j+1}{N}$, ${\left( \int \zt Z_k^-\right)}^2 \leq 2{(\alpha_k^-)}^2+ Ce^{-2\c t}\nh{z}^2 \leq Ce^{-2(e_j+2\c)t}$ by \eqref{eq:ta}.
\end{enumerate}
Finally, we have proved that there exists $K>0$ such that, for all $t\in [\ta,S_n]$, \[ \nh{\zt(t)}\leq Ke^{-(e_j+2\c)t}. \] We want now to prove the same estimate for $z$.

\begin{lem} \label{th:zdecroit}
There exists $K_0>0$ such that, for all $t\in [\ta,S_n]$, \[ \nh{z(t)}\leq K_0e^{-(e_j+2\c)t}. \]
\end{lem}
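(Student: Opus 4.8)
The plan is to deduce the estimate for $\nh{z}$ from the already established bound $\nh{\zt(t)}\leq Ke^{-(e_j+2\c)t}$ together with the equivalence \eqref{eq:zzt}: it suffices to show that each scalar coefficient $a_k(t)$, equivalently each quantity $\int z(t)R_{kx}(t)$, decays like $e^{-(e_j+2\c)t}$ on $[\ta,S_n]$. The mechanism is that $R_{kx}$ spans the kernel of $L_{c_k}$, so the linearized flow is neutral in this direction and one has a good differential inequality for $a_k$, exactly as for the translation parameters in \cite[Section 4]{martel:Nsoliton}.

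First I would differentiate $\int z R_{kx}$. Using the equation \eqref{eq:z3} for $z$ and $\partial_t R_{kx}=-c_kR_{kxx}$, integrations by parts give
\[
\dt \int zR_{kx} = \int z\big(R_{kxxxx}-c_kR_{kxx}+pR_k^{p-1}R_{kxx}\big) + p\int(\phib^{p-1}-R_k^{p-1})zR_{kxx} + \int(\omega_1 z+\omega(z)+\Omega)R_{kxx},
\]
and differentiating twice the identity $Q''_{c_k}+Q_{c_k}^p=c_kQ_{c_k}$ of Claim~\ref{th:solitons} yields $R_{kxxxx}-c_kR_{kxx}+pR_k^{p-1}R_{kxx}=-p(p-1)R_k^{p-2}R_{kx}^2$, so the first integral equals $-p(p-1)\int zR_k^{p-2}R_{kx}^2$.

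Next I would estimate each term on $[\ta,S_n]$. As in the proof of \eqref{eq:alpha}, the $\omega_1,\omega(z),\Omega$ contribution is $\leq Ce^{-e_jt}\nh{z}+C\nh{z}^2+Ce^{-(e_j+4\c)t}$, and $|\int(\phib^{p-1}-R_k^{p-1})zR_{kxx}|\leq Ce^{-4\c t}\nh{z}$ since $\phib-R_k=(\phib-R)+\sum_{l\neq k}R_l$ is small near soliton $k$ by \eqref{eq:cmm} and \eqref{eq:interact}, while $R_{kxx}$ is exponentially small away from it. For the leading term I would substitute $z=\zt-\sum_l a_lR_{lx}$: the diagonal part $\int R_{kx}\cdot R_k^{p-2}R_{kx}^2=\int Q_{c_k}^{p-2}(Q'_{c_k})^3$ vanishes by parity ($Q_{c_k}$ even, $Q'_{c_k}$ odd), the off-diagonal ones are $O(e^{-10\c t})$ by \eqref{eq:interact}, hence $|\int zR_k^{p-2}R_{kx}^2|\leq C\nh{\zt}+Ce^{-10\c t}\nh{z}$. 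Inserting $\nh{z(t)}\leq e^{-(e_j+\c)t}$ from \eqref{eq:ta}, the bound $\nh{\zt(t)}\leq Ke^{-(e_j+2\c)t}$, and $e_j\geq\c$ (immediate from \eqref{gamma}), every term is $\leq K'e^{-(e_j+2\c)t}$, so $|\dt a_k(t)|\leq K'e^{-(e_j+2\c)t}$ on $[\ta,S_n]$. Integrating backwards from $S_n$, where $|a_k(S_n)|\leq C\nh{z(S_n)}\leq C\|\b\|\leq Ce^{-(e_j+2\c)S_n}$ by Claim~\ref{Sn}, \eqref{eq:zzt} and $\|\b\|\leq 2\|\a\|\leq 2e^{-(e_j+2\c)S_n}$, gives $|a_k(t)|\leq K''e^{-(e_j+2\c)t}$; then \eqref{eq:zzt} together with $\nh{\zt(t)}\leq Ke^{-(e_j+2\c)t}$ yields $\nh{z(t)}\leq K_0e^{-(e_j+2\c)t}$.

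The main obstacle is the leading term $-p(p-1)\int zR_k^{p-2}R_{kx}^2$: a crude bound $\leq C\nh{z}\leq Ce^{-(e_j+\c)t}$ gives no improvement over \eqref{eq:ta}, and only after projecting off the directions $R_{lx}$ and exploiting the parity cancellation of the diagonal contribution does the right-hand side become $O(\nh{\zt})+O(e^{-10\c t}\nh{z})$, the self-improving structure that closes the estimate. The remaining terms are routine, handled as in the previous subsections with \eqref{eq:interact} and the bounds on $\omega_1,\omega(z),\Omega$.
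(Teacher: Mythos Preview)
Your proof is correct and follows the same overall strategy as the paper --- bound $a'_k$, integrate backwards from $S_n$, then invoke \eqref{eq:zzt} --- but the way you control $a'_k$ is genuinely different. The paper writes the equation satisfied by $\zt$ (which brings in the terms $\sum_l a'_l R_{lx}$), multiplies by $R_{kx}$, and is left with a coupled system
\[
|a'_k|\leq K e^{-(e_j+2\c)t}+C_4e^{-\c t}\sum_{l\neq k}|a'_l|,
\]
closed by imposing an extra largeness condition $C_4e^{-\c t_0}\leq \tfrac{1}{N}$. You instead differentiate $\int z R_{kx}$ directly; the only potentially dangerous term is $-p(p-1)\int zR_k^{p-2}R_{kx}^2$, and you dispose of its diagonal piece by the parity cancellation $\int Q_{c_k}^{p-2}(Q'_{c_k})^3=0$, so that the whole right-hand side is already $O(\nh{\zt})+O(e^{-\c t}\nh{z})+\cdots$ with no $a'_l$ coupling at all. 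This is slightly cleaner than the paper's argument: it avoids the auxiliary equation for $\zt$, the system of inequalities for the $a'_k$, and the associated constraint on $t_0$. The paper's route, on the other hand, is closer to the standard modulation-theoretic picture (tracking the orthogonal component $\zt$), which is why it reappears verbatim in later sections.
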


\begin{proof}
By \eqref{eq:zzt}, it is enough to prove this estimate for $|a_k(t)|$ with $k\in\unn$ fixed. To do this, write first the equation of $\zt$ from the equation of $z$ \eqref{eq:z2}:
\begin{align*} &\zt_t +{(\zt_{xx}+p\phib^{p-1}\zt)}_x\\ &= z_t +\sum_{l=1}^N a_lR_{lxt} +\sum_{l=1}^N a'_lR_{lx} +z_{xxx} +\sum_{l=1}^N a_lR_{lxxx} +p\sum_{l=1}^N a_l{(R_{lx}\phib^{p-1})}_x +p{(\phib^{p-1}z)}_x\\ &= -{(\omega_1\cdot z)}_x -{(\omega(z))}_x -\Omega_x +\sum_{l=1}^N a'_lR_{lx} +\sum_{l=1}^N a_l{\Big[ -c_lR_{lx}+R_{lxxx} +p\phib^{p-1}R_{lx} \Big]}_x. \end{align*} Then multiply this equation by $R_{kx}$ and integrate, so that we obtain \begin{multline*} \int \zt_tR_{kx} -\int (\zt_{xx}+p\phib^{p-1}\zt)R_{kxx} = a'_k\int R_{kx}^2 +\sum_{l\neq k} a'_l \int R_{lx}R_{kx}\\ +\sum_{l=1}^N a_l\int {\Big[ R_{lxxx}-c_lR_{lx}+p\phib^{p-1}R_{lx}\Big]}_x R_{kx} +\int \omega_1zR_{kxx} +\int \omega(z)R_{kxx} +\int \Omega R_{kxx}. \end{multline*}

But from \eqref{eq:cmm} and (iii) of Claim \ref{th:solitons}, we have \begin{align*} \nli{{(R_{lxxx}-c_lR_{lx}+p\phib^{p-1}R_{lx})}_x} &\leq p{\| R_{lx}(\phib^{p-1}-R_l^{p-1})\|}_{H^2}\\ &\leq C{\|\phib-R\|}_{H^2} +p{\|R_{lx}(R^{p-1}-R_l^{p-1})\|}_{H^2} \leq Ce^{-2\c t}, \end{align*} and consequently \begin{align*} |a'_k| &\leq C\left| \int \zt_tR_{kx}\right| +C\nld{\zt} +Ce^{-\c t}\sum_{l\neq k} |a'_l| +Ce^{-2\c t}\sum_{l=1}^N |a_l|\\ &\qquad +Ce^{-e_jt}\nld{z} +C\nld{z}^2 +C\nld{\Omega}. \end{align*} Moreover, from $\int \zt R_{kx} = \sum_{l\neq k} a_l\int R_{lx}R_{kx}$, we deduce that \begin{align*} \dt\int \zt R_{kx} &= \sum_{l\neq k} a'_l \int R_{kx}R_{lx} +\sum_{l\neq k} a_l\int (-c_lR_{lxx}R_{kx}-c_kR_{lx}R_{kxx})\\ &= \int \zt_t R_{kx} +\int \zt(-c_kR_{kxx}), \end{align*} and so \[  \left| \int \zt_t R_{kx} \right| \leq C\nh{\zt} +Ce^{-\c t}\sum_{l\neq k} |a'_l| +Ce^{-2\c t}\sum_{l=1}^N |a_l|. \]

Gathering previous estimates, we have from \eqref{eq:zzt} and \eqref{Omega}, \begin{align*} |a'_k| &\leq C\nh{\zt} + C_4e^{-\c t}\sum_{l\neq k} |a'_l| +Ce^{-2\c t}\nh{z}+C\nh{z}^2 +C\nld{\Omega}\\ &\leq Ke^{-(e_j+2\c)t} + C_4e^{-\c t}\sum_{l\neq k} |a'_l|+ Ce^{-2\c t}e^{-(e_j+\c)t} +Ce^{-2(e_j+\c)t} +Ce^{-(e_j+4\c)t}. \end{align*} Finally, if we choose $t_0$ large enough so that $C_4e^{-\c t_0}\leq \frac{1}{N}$, we obtain for all $s\in [\ta,S_n]$, \[ |a'_k(s)|\leq Ke^{-(e_j+2\c)s}. \] By integration on $[t,S_n]$ with $t\in [\ta,S_n]$, we get $|a_k(t)|\leq |a_k(S_n)| +Ke^{-(e_j+2\c)t}$. But from Claim \ref{Sn} and Lemma \ref{th:finaldata}, we have \[ |a_k(S_n)| \leq C\nh{z(S_n)} \leq C\|\b\| \leq C\|\a\| \leq Ce^{-(e_j+2\c)S_n} \leq Ce^{-(e_j+2\c)t}, \] and so finally, \[ \forall t\in [\ta,S_n],\quad |a_k(t)|\leq Ke^{-(e_j+2\c)t}. \qedhere \]
\end{proof}

\subsubsection{Control of the unstable directions for $k>j$ by a topological argument}

Lemma \ref{th:zdecroit} being proved, we choose $t_0$ large enough so that $K_0e^{-\c t_0}\leq \frac{1}{2}$. Therefore, we have \[ \forall t\in [\ta,S_n],\quad \nh{z(t)}\leq \frac{1}{2}e^{-(e_j+\c)t}. \] We can now prove the following final lemma, which concludes the proof of Proposition \ref{th:princbis}.

\begin{lem}
For $t_0$ large enough, there exists $\a\in B_{\R^{N-j}}(e^{-(e_j+2\c)S_n})$ such that $\ta=t_0$.
\end{lem}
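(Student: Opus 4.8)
The plan is to run the classical topological (degree-theoretic) argument of \cite{martel:Nsolitons} on the map $\a\mapsto\alpham(\ta(\a))$. Fix $t_0$ large enough that all the estimates of the previous subsections hold (in particular $K_0e^{-\c t_0}\le\frac12$), and argue by contradiction: suppose that for \emph{every} $\a\in B_{\R^{N-j}}(e^{-(e_j+2\c)S_n})$ one has $\ta(\a)>t_0$. Write $\rho_n=e^{-(e_j+2\c)S_n}$. For such $\a$, by minimality of $\ta$ and continuity of $t\mapsto(z(t),\alpham(t))$, at least one of the two conditions in \eqref{eq:ta} must be an equality at $t=\ta$. But Lemma \ref{th:zdecroit} shows $\nh{z(\ta)}\le K_0e^{-(e_j+2\c)\ta}\le\frac12 e^{-(e_j+\c)\ta}$, so the first condition $e^{(e_j+\c)\ta}z(\ta)\in B_{H^1}(1)$ holds \emph{strictly}; hence it must be the second one, i.e. $e^{(e_j+2\c)\ta}\alpham(\ta)\in\S_{\R^{N-j}}(1)$. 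This is the key ``exit through the sphere'' dichotomy: the only way to leave the box $B(1)\times B(1)$ is through $B_{H^1}(1)\times\S_{\R^{N-j}}(1)$.

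The second ingredient is a strict outgoing (transversality) property at the exit. For $k>j$ one has $e_k\ge e_{j+1}>e_j+\c$ (by the definition \eqref{gamma} of $\sigma_0$ and $\c$, since $e_k-e_j=(c_k^{3/2}-c_j^{3/2})e_0$ is of order $\sigma_0^{3/2}\gg\c$), and from \eqref{eq:alpham} we have $|\tfrac{d}{dt}\alpha_k^-+e_k\alpha_k^-|\le K_2e^{-(e_j+4\c)t}$. A direct computation of $\tfrac{d}{dt}\big(e^{2(e_j+2\c)t}\|\alpham(t)\|^2\big)$ at a time $t$ where $e^{(e_j+2\c)t}\alpham(t)\in\S_{\R^{N-j}}(1)$ then gives, using $2(e_j+2\c)-2e_k\le 2(e_j+2\c)-2e_{j+1}<-\c<0$ and the smallness of the error term for $t_0$ large,
\begin{equation*}
\frac{d}{dt}\Big(e^{2(e_j+2\c)t}\|\alpham(t)\|^2\Big) = 2e^{2(e_j+2\c)t}\Big[(e_j+2\c)\|\alpham\|^2 - \sum_{k>j} e_k(\alpha_k^-)^2 + O\big(e^{-2\c t}\big)\Big] < 0
\end{equation*}
at every such exit time. (Here one uses $\|\alpham(t)\|=e^{-(e_j+2\c)t}$, so the bracket is $\le -\c + K_2 e^{-2\c t_0}<0$.) This means the vector field points strictly \emph{inward} through the sphere, which is exactly the wrong sign for a barrier — equivalently, once $e^{(e_j+2\c)t}\alpham$ touches the unit sphere going backwards in time it cannot have been inside just before. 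Hence $t\mapsto \ta(\a)$ is well defined and, crucially, the map $\a\mapsto e^{(e_j+2\c)\ta(\a)}\alpham(\ta(\a))$ is continuous from $B_{\R^{N-j}}(\rho_n)$ to $\S_{\R^{N-j}}(1)$; continuity in $\a$ follows from continuous dependence of the flow \eqref{un} on the final data together with the strict transversality just established (the exit time depends continuously on $\a$).

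Finally I would derive the contradiction via a retraction/degree argument. On the boundary sphere $\a\in\S_{\R^{N-j}}(\rho_n)$, Claim \ref{Sn} and Lemma \ref{th:finaldata} give $\alpham(S_n)=\a$, so $e^{(e_j+2\c)S_n}\alpha^-_k(S_n)=e^{(e_j+2\c)S_n}\rho_n\cdot(\a/|\a|)\cdot\text{(something)}$ — more precisely $\|e^{(e_j+2\c)S_n}\alpham(S_n)\|=e^{(e_j+2\c)S_n}\rho_n=1$, so already at $t=S_n$ the point $e^{(e_j+2\c)t}\alpham(t)$ is \emph{on} the unit sphere, forcing $\ta(\a)=S_n$ and hence the exit map restricted to $\S_{\R^{N-j}}(\rho_n)$ equals $\a\mapsto e^{(e_j+2\c)S_n}\a=\a/\rho_n$, i.e. the identity up to the dilation $\rho_n^{-1}$. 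Thus $\a\mapsto \rho_n\, e^{(e_j+2\c)\ta(\a)}\alpham(\ta(\a))$ would be a continuous map $B_{\R^{N-j}}(\rho_n)\to\S_{\R^{N-j}}(\rho_n)$ restricting to the identity on the boundary sphere — a continuous retraction of the closed ball onto its boundary, which is impossible by Brouwer. (If $N-j=0$ the claim is vacuous, and if $N-j\ge1$ this is the standard no-retraction theorem.) This contradiction shows that for some $\a\in B_{\R^{N-j}}(\rho_n)$ we must have $\ta(\a)=t_0$, which is the assertion of the lemma and completes the proof of Proposition \ref{th:princbis}.

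\medskip

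The main obstacle I expect is not any single estimate — all the hard PDE work (energy monotonicity Lemma \ref{th:dHdt}, the coercivity bootstrap \eqref{eq:Lht}, the $R_{kx}$-modulation Lemma \ref{th:zdecroit}, and the linear ODE bounds \eqref{eq:alphap}, \eqref{eq:alphambis}) is already in place. The delicate point is the soft part: verifying that $\ta(\a)$ depends \emph{continuously} on $\a$, which requires (i) strict transversality at the exit, proved above from $e_{j+1}>e_j+\c$ so that the only neutral/unstable modes left uncontrolled are genuinely expanding backwards at a rate strictly exceeding the target $e^{-(e_j+2\c)t}$, and (ii) local well-posedness and continuous dependence for \eqref{un} on $[t_0,S_n]$, valid here because the a priori bound $\nh{z(t)}\le\frac12 e^{-(e_j+\c)t}$ keeps $u_n(t)$ in a fixed bounded set of $H^1$ on which the Cauchy theory of \cite{kpv} applies uniformly. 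Granting these, Brouwer closes the argument exactly as in \cite[Section 4]{martel:Nsolitons}.
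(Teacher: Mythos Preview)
Your proof is correct and follows essentially the same approach as the paper: argue by contradiction, use Lemma \ref{th:zdecroit} to force the exit at $t=\ta$ to occur through the $\alpham$-sphere, compute the derivative of $\N(t)=\|e^{(e_j+2\c)t}\alpham(t)\|^2$ from \eqref{eq:alpham} to obtain strict transversality $\N'(\tau)\le -\theta/2<0$ at every $\tau$ with $\N(\tau)=1$, deduce continuity of $\a\mapsto\ta(\a)$, observe that on $\S_{\R^{N-j}}(\rho_n)$ one has $\ta=S_n$ and the exit map is the identity (after dilation), and conclude by the no-retraction form of Brouwer.

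Two minor remarks. First, your displayed bracket $(e_j+2\c)\|\alpham\|^2-\sum_{k>j}e_k(\alpha_k^-)^2+O(e^{-2\c t})$ is dimensionally off: the error coming from \eqref{eq:alpham} is $O(e^{-(e_j+4\c)t}\|\alpham\|)$, which at a time where $\|\alpham\|=e^{-(e_j+2\c)t}$ gives, after multiplying by $2e^{2(e_j+2\c)t}$, exactly the paper's inequality $\N'(\tau)\le -\theta+K_2e^{-2\c\tau}$ with $\theta=2(e_{j+1}-e_j-2\c)$; your ``$\le -\c+K_2e^{-2\c t_0}$'' should read ``$\le -\theta+K_2e^{-2\c t_0}$''. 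Second, the paper also notes that \eqref{eq:alpham} must be extended to a slightly larger interval $[T,S_n]$ with $T<\ta$ (where only the $H^1$ bound is needed, and it persists by continuity) in order to make the transversality and continuity argument rigorous at $t=\ta$; you use this implicitly but it is worth stating.
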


\begin{proof}
For the sake of contradiction, suppose that for all $\a\in B_{\R^{N-j}}(e^{-(e_j+2\c)S_n})$, $\ta>t_0$. As $e^{(e_j+\c)\ta}z(\ta)\in B_{H^1}(1/2)$, then by definition of $\ta$ and continuity of the flow, we have \begin{equation} \label{eq:alphamta} e^{(e_j+2\c)\ta}\alpham(\ta) \in\S_{\R^{N-j}}(1). \end{equation} Now let $T\in [t_0,\ta]$ be close enough to $\ta$ such that $z$ is defined on $[T,S_n]$, and by continuity, \[ \forall t\in [T,S_n],\quad \nh{z(t)}\leq e^{-(e_j+\c)t}. \] We can now consider, for $t\in [T,S_n]$, \[ \N(t) = \N(\alpham(t)) = \| e^{(e_j+2\c)t}\alpham(t)\|^2. \] To calculate $\N'$, we start from estimate \eqref{eq:alpham}: \[ \forall k\in \ient{j+1}{N}, \forall t\in [T,S_n],\quad \left| \dt \alpha_k^-(t) +e_k\alpha_k^-(t)\right| \leq K'_2e^{-(e_j+4\c)t}. \] Multiplying by $|\alpha_k^-(t)|$, we obtain \[ \left| \alpha_k^-(t)\dt\alpha_k^-(t) +e_k{\alpha_k^-(t)}^2 \right| \leq K'_2e^{-(e_j+4\c)t}|\alpha_k^-(t)|, \] and thus \[ 2\alpha_k^-(t)\dt\alpha_k^-(t) +2e_{j+1}{\alpha_k^-(t)}^2 \leq 2\alpha_k^-(t)\dt\alpha_k^-(t) +2e_k{\alpha_k^-(t)}^2 \leq K_2e^{-(e_j+4\c)t}|\alpha_k^-(t)|. \] By summing on $k\in \ient{j+1}{N}$, we get \[ {(\|\alpham(t)\|^2)}' +2e_{j+1}\|\alpham(t)\|^2 \leq K_2e^{-(e_j+4\c)t}\|\alpham(t)\|. \]

Therefore we can estimate \begin{align*} \N'(t) &= {(e^{2(e_j+2\c)t}\|\alpham(t)\|^2)}' = e^{2(e_j+2\c)t}\left[ 2(e_j+2\c)\|\alpham(t)\|^2 + {(\|\alpham(t)\|^2)}' \right]\\ &\leq e^{2(e_j+2\c)t} \left[ 2(e_j+2\c)\|\alpham(t)\|^2 -2e_{j+1}\|\alpham(t)\|^2 +K_2e^{-(e_j+4\c)t}\|\alpham(t)\| \right]. \end{align*} Hence we have, for all $t\in [T,S_n]$, \[ \N'(t) \leq -\theta\cdot\N(t) +K_2e^{e_jt}\|\alpham(t)\|, \] where $\theta=2(e_{j+1}-e_j-2\c)>0$ by definition of $\c$ \eqref{gamma}. In particular, for all $\tau\in [T,S_n]$ satisfying $\N(\tau)=1$, we have \[ \N'(\tau) \leq -\theta +K_2e^{e_j\tau}\|\alpham(\tau)\| = -\theta +K_2e^{e_j\tau}e^{-(e_j+2\c)\tau} = -\theta +K_2e^{-2\c\tau} \leq -\theta +K_2e^{-2\c t_0}. \] Now we fix $t_0$ large enough so that $K_2e^{-2\c t_0}\leq \frac{\theta}{2}$, and so for all $\tau\in [T,S_n]$ such that $\N(\tau)=1$, we have \begin{equation} \label{eq:Nprime} \N'(\tau) \leq -\frac{\theta}{2}. \end{equation} In particular, by \eqref{eq:alphamta}, we have $\N'(\ta) \leq -\frac{\theta}{2}$.

\begin{description}
\item[First consequence:] $\a\mapsto \ta$ is continuous. Indeed, let $\eps>0$. Then there exists $\delta>0$ such that $\N(\ta-\eps)>1+\delta$ and $\N(\ta+\eps)<1-\delta$. Moreover, by definition of $\ta$ and \eqref{eq:Nprime}, there can not exist $\tau\in [\ta+\eps,S_n]$ such that $\N(\tau)=1$, and so by choosing $\delta$ small enough, we have for all $t\in [\ta+\eps,S_n]$, $\N(t)<1-\delta$. But from continuity of the flow, there exists $\eta>0$ such that, for all $\widetilde{\mathfrak{a}}^-$ satisfying $\|\widetilde{\mathfrak{a}}^- -\a\|\leq \eta$, we have \[ \forall t\in [\ta-\eps,S_n],\quad |\N(\widetilde{\boldsymbol{\alpha}}^-(t)) -\N(\alpham(t))| \leq \delta/2. \] We finally deduce that $\ta-\eps \leq T(\widetilde{\mathfrak{a}}^-)\leq \ta+\eps$, as expected.
\item[Second consequence:] We can define the map \[ \begin{array}{rrcl} \M~: &B_{\R^{N-j}}(e^{-(e_j+2\c)S_n}) &\to &\S_{\R^{N-j}}(e^{-(e_j+2\c)S_n})\\ &\a &\mapsto &e^{-(e_j+2\c)(S_n-\ta)}\alpham(\ta). \end{array} \] Note that $\M$ is continuous by the previous point. Moreover, let $\a\in\S_{\R^{N-j}}(e^{-(e_j+2\c)S_n})$. As $\N'(S_n)\leq -\frac{\theta}{2}$ by \eqref{eq:Nprime}, we deduce by definition of $\ta$ that $\ta=S_n$, and so $\M(\a)=\a$. In other words, $\M$ restricted to $\S_{\R^{N-j}}(e^{-(e_j+2\c)S_n})$ is the identity. But the existence of such a map $\M$ contradicts Brouwer's fixed point theorem.
\end{description}

In conclusion, there exists $\a\in B_{\R^{N-j}}(e^{-(e_j+2\c)S_n})$ such that $\ta=t_0$.
\end{proof}

\section{Classification of multi-solitons} \label{sec:classification}

This section is devoted to prove the second assertion of Theorem \ref{th:main}. Let $p>5$, $N\geq 2$, $0<c_1<\cdots<c_N$ and $x_1,\ldots,x_N\in\R$. Denote $R=\sum_{j=1}^N R_{c_j,x_j}$ and $\phib$ the multi-soliton given by Theorem \ref{th:cmm}. Let $u$ be a solution of \eqref{eq:gKdV}, defined on $[t_1,+\infty)$ with $t_1>0$ large, satisfying \begin{equation} \label{eq:hypo} \lim_{t\to+\infty} \nh{u(t)-R(t)} = 0. \end{equation}

\subsection{Convergence at exponential rate $\c$} \label{sec:expoc}

We first improve condition \eqref{eq:hypo} into an exponential convergence, with a small rate $\c>0$, where $\c$ is defined by \eqref{gamma}.

\begin{lem}
Let $\eps=u-\phib$. Then there exist $C,t_0>0$ such that, for all $t\geq t_0$, $\nh{\eps(t)}\leq Ce^{-\c t}$.
\end{lem}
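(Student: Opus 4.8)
The plan is to upgrade the qualitative convergence $\nh{u(t)-R(t)}\to 0$ to an exponential bound $\nh{\eps(t)}\le Ce^{-\c t}$, where $\eps=u-\phib$ and $\phib$ is the reference multi-soliton of Theorem~\ref{th:cmm}. Since $\phib$ already converges to $R$ at rate $e^{-4\c t}$ by \eqref{eq:cmm}, it suffices to control $\eps$; and since $\c$ is very small compared to the spectral gaps $e_k$, we only need a crude exponential rate, so the unstable directions $Y_k^\pm$ will not yet require delicate topological arguments. The key tool is the monotonicity-of-energy machinery of Martel--Merle \cite{martel:Nsoliton,martel:tsai}, adapted exactly as in Section~\ref{sec:monotonicity} above but now run \emph{forward} in time (or on bounded windows) rather than backward.

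The steps I would carry out: \textbf{(1)} Write the equation for $\eps=u-\phib$; since both $u$ and $\phib$ solve \eqref{eq:gKdV}, $\eps$ satisfies a linear-plus-nonlinear equation with a source term controlled by $\nhs{\phib-R}\le A_se^{-4\c t}$, exactly of the type handled before but with $v_j=0$. \textbf{(2)} Introduce the weight function $h(t,x)$ and the energy-type functional $H(t)=\int\big[(\eps_x^2-F(t,\eps))h+\eps^2\big]$ as in Section~\ref{sec:monotonicity}, and establish the analogue of Lemma~\ref{th:dHdt}: $\frac{dH}{dt}(t)\ge -C_0\nh{\eps}^3-C_1\nh{\eps}^2\cdot(\text{small})-C_2e^{-(4\c)t}\nh{\eps}$, and symmetrically an upper bound, so that $H$ is almost monotone. \textbf{(3)} Use coercivity (Lemma~\ref{th:Zc}(vi)) together with localization (as in \cite[Lemma~4]{martel:tsai}) to get $\nh{\eps}^2\lesssim H(t)+\sum_k[(\int\tilde\eps R_{kx})^2+(\int\tilde\eps Z_k^+)^2+(\int\tilde\eps Z_k^-)^2]$ after the usual modulation $\tilde\eps=\eps+\sum_k a_k R_{kx}$ removing the $R_{kx}$ directions. \textbf{(4)} Bound the scalar products $\alpha_k^\pm(t)=\int\eps(t)Z_k^\pm(t)$ by integrating their differential equations $|\frac{d}{dt}\alpha_k^\pm\mp e_k\alpha_k^\pm|\lesssim e^{-4\c t}\nh{\eps}+\nh{\eps}^2+e^{-4\c t}$: the stable ($+$) directions are controlled by integrating forward from any fixed time using only $\nh{\eps}\to0$; the unstable ($-$) directions are controlled by integrating \emph{backward} from $+\infty$, where $\alpha_k^-\to 0$, which gains the exponential factor $e^{-e_k t}$ and hence certainly beats $e^{-\c t}$. \textbf{(5)} Combine: feeding these bounds into (3) and then (2), a Gronwall / bootstrap argument on an interval $[t_0,+\infty)$ where $\nh{\eps}$ is already small (possible since $\nh{\eps}\to 0$) closes to give $\nh{\eps(t)}\le Ce^{-\c t}$.

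The main obstacle I expect is \textbf{step (2)--(3): making the monotonicity argument run cleanly with only $L^\infty$-smallness of $\eps$ as input rather than a pre-existing exponential rate.} Backward integration of $H$ requires knowing $H(S)$ at large $S$ is small, which here comes only from $\nh{\eps(S)}\to 0$ without rate, so one must be careful that the error terms accumulated over $[t,S]$ (the $\int_t^S e^{-(4\c)s}\nh{\eps(s)}\,ds$ type integrals) are absorbable; the trick, standard in \cite{martel:Nsoliton,martel:tsai}, is that $e^{-4\c s}$ is integrable and the nonlinear term $\nh{\eps}^3\le o(1)\nh{\eps}^2$ is absorbed by coercivity once $t_0$ is large. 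A secondary subtlety is the interplay between the $a_k$ modulation parameters and the $\alpha_k^-$ backward estimates — one must verify, as in Lemma~\ref{th:zdecroit}, that $|a_k'(t)|$ is itself controlled by $\nh{\tilde\eps}$ plus lower-order terms so that $|a_k(t)|$ inherits the decay — but this is by now routine. Once the crude rate $e^{-\c t}$ is in hand, it serves as the starting point for the later step-by-step improvement (to rates $e_1,\ldots,e_N$) in the rest of Section~\ref{sec:classification}.
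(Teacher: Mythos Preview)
Your step~(4) contains a genuine gap. From $\bigl|\frac{d}{dt}\alpha_k^{\pm}\mp e_k\alpha_k^{\pm}\bigr|\lesssim \nh{w}^2+e^{-2\c t}$ one sees that $\alpha_k^+$ \emph{grows} forward while $\alpha_k^-$ \emph{decays} forward, so your assignment of integration directions is reversed. More importantly, even after swapping them the argument does not close: backward integration of $\alpha_k^+$ from $+\infty$ gives $|\alpha_k^+(t)|\le e^{e_kt}\int_t^\infty e^{-e_ks}\bigl(\nh{w(s)}^2+e^{-2\c s}\bigr)\,ds$, which without a prior exponential rate on $\nh{w}$ yields only smallness, not decay; and your proposed backward integration of $\alpha_k^-$ is impossible since $e^{e_kt}\alpha_k^-(t)$ is not known to converge (indeed, later in the paper it is shown to tend to a generically nonzero constant $A_k$). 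The paper's proof breaks this circularity with an idea absent from your outline: after reducing to a closed system for $\alphatv$ alone via $\nh{w}^2\le C\|\alphatv\|^2+Ce^{-2\c t}$, it sets $A=\sum_k(\alphat_k^+)^2$, $B=\sum_k(\alphat_k^-)^2$ and proves the comparison $A(t)\le B(t)+Le^{-2\c t}$ by a blow-up contradiction (if $h=A-B-Le^{-2\c t}\ge 0$ somewhere then $h'\ge e_1h$, contradicting $h\to 0$); then $B'+e_1B\le Ke^{-2\c t}$ is integrated \emph{forward} from $t_0$.

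Your step~(2) is also off: the functional $H$ of Section~\ref{sec:monotonicity} satisfies $\frac{dH}{dt}\ge -(\text{error})$, which is useful backward in time but not forward, and backward integration over $[t,\infty)$ produces $\int_t^\infty\nh{\eps(s)}^3\,ds$, which is not a priori finite. The paper instead modulates around $R$ by translation parameters $y_j(t)$ so that $w=u-\Rt$ is exactly orthogonal to each $(\Rt_j)_x$, and then invokes the \emph{forward} monotonicity of the localized masses $M_j$ and energies $E_j$ from \cite{martel:Nsoliton}, combined through an Abel-summation identity, to obtain $\sum_j c_j^{-2}H_j(t)\le Ce^{-2\c t}+C\nh{w}^3$. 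It is this step, specific to the gKdV structure, that puts an honest $e^{-2\c t}$ on the right-hand side of the coercivity estimate and makes the subsequent ODE argument for $\alphatv$ possible.
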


\begin{proof}
\textit{Step 1: Modulation.} Denote $v=u-R$, so that $\nh{v(t)}\to 0$ as $t\to+\infty$ by \eqref{eq:hypo}. Therefore, by a standard lemma of modulation (see for example \cite[Lemma 2]{martel:Nsoliton}), for $t_0$ large enough, there exist $N$ functions $y_j~: [t_0,+\infty) \to\R$ of class $C^1$ such that $w=u-\Rt$, where $\Rt = \sum \Rt_j$ and $\Rt_j(t) = R_j(t,\cdot-y_j(t))$, satisfies \[ \begin{cases} \forall j\in\unn,\quad \int w(t){(\Rt_j)}_x(t)=0,\\ \nh{w(t)}+\sum_{j=1}^N |y_j(t)| \leq C\nh{v(t)},\\ \forall j\in\unn,\quad |y'_j(t)|\leq C\nh{w(t)}+Ce^{-\c t}. \end{cases} \] Note that the first two facts are a simple consequence of the implicit function theorem, while the last estimate comes from the equation satisfied by $w$, \[ \partial_t w + \partial_x^3 w = \sum_{k=1}^N y'_k \partial_x(\Rt_k) - \partial_x \left( {(w+\Rt)}^p -\sum_{k=1}^N {\Rt_k}^p \right), \] multiplied by ${(\Rt_j)}_x$ and integrated. Similarly, if we denote $\Zt_j^{\pm}(t)=Z_j^{\pm}(t,\cdot-y_j(t))$ and $\alphat_j^{\pm}(t) = \int w(t)\Zt_j^{\pm}(t)$, the equation of $w$ multiplied by $\Zt_j^{\pm}$ leads to \begin{equation} \label{eq:alphatilde} \forall t\geq t_0,\quad \left| \dt \alphat_j^{\pm}(t)\mp e_j \alphat_j^{\pm}(t)\right| \leq C\nh{w(t)}^2 +Ce^{-2\c t}. \end{equation}

\textit{Step 2: Monotonicity.} We use again the function $\psi$ introduced in Section \ref{sec:monotonicity}. Following \cite{martel:Nsoliton}, we introduce moreover $\psi_N \equiv 1$ and for $j\in\ient{1}{N-1}$, \[ m_j(t) = \frac{c_j+c_{j+1}}{2}t +\frac{x_j+x_{j+1}}{2},\quad \psi_j(t)=\psi(x-m_j(t)), \] and \[ \phi_1\equiv \psi_1,\quad \phi_N\equiv 1-\psi_{N-1},\quad \phi_j \equiv \psi_j-\psi_{j-1}\quad \m{for}\ j\in\ient{2}{N-1}. \] We also define some local quantities related to $L^2$ mass and energy: \[ M_j(t) = \int u^2(t)\phi_j(t),\quad E_j(t) = \int \left(\frac{1}{2}u_x^2(t) -\frac{1}{p+1}u^{p+1}(t)\right)\phi_j(t),\quad \wt E_j(t) = E_j(t) + \frac{\sigma_0}{100}M_j(t). \] Then, by \eqref{eq:hypo} and monotonicity results on the quantities $t\mapsto \sum_{k=1}^j M_k(t)$ and $t\mapsto \sum_{k=1}^j E_k(t)$, we have, for all $t\geq t_0$ and all $j\in\unn$, following Lemmas 1 and 3 of \cite{martel:Nsoliton}, \begin{empheq}[left=\empheqlbrace\,]{align} &\sum_{k=1}^j \left( \int Q_{c_k}^2 -M_k(t)\right) \geq -K_2e^{-2\c t}, \label{eq:locmass} \\ &\sum_{k=1}^j \left( E(Q_{c_k})+\frac{\sigma_0}{100}\int Q_{c_k}^2 -\wt E_k(t)\right) \geq -K_2e^{-2\c t}, \label{eq:locenergy} \end{empheq} and \begin{equation} \label{eq:locweinstein} \left| \left( E_j(t)+\frac{c_j}{2}M_j(t)\right) -\left( E(Q_{c_j})+\frac{c_j}{2}\int Q_{c_j}^2\right) -\frac{1}{2}H_j(t)\right| \leq K_4e^{-2\c t} +K_4\nh{w(t)}\int w^2\phi_j, \end{equation} where $H_j(t) = \int \left( w_x^2(t)+c_jw^2(t)-p\Rt_j^{p-1}(t)w^2(t)\right)\phi_j(t)$. But if we write \begin{multline*} \sum_{j=1}^N \frac{1}{c_j^2}\left(E_j+\frac{c_j}{2}M_j\right) = \sum_{j=1}^{N-1} \left[ \left( \frac{1}{c_j^2}-\frac{1}{c_{j+1}^2}\right) \sum_{k=1}^j \wt E_k\right] +\frac{1}{c_N^2}\sum_{k=1}^N \wt E_k +\frac{1}{2c_N}\left(1-\frac{\sigma_0}{50c_N}\right) \sum_{k=1}^N M_k\\ +\sum_{j=1}^{N-1} \left[ \frac{1}{2}\left( \frac{1}{c_j}-\frac{1}{c_{j+1}}\right)\left( 1-\frac{\sigma_0}{50}\left(\frac{1}{c_j}+\frac{1}{c_{j+1}}\right) \right) \sum_{k=1}^j M_k \right], \end{multline*} and similarly \begin{multline*} \sum_{j=1}^N \frac{1}{c_j^2} \left( E(Q_{c_j})+\frac{c_j}{2}\int Q_{c_j}^2\right) = \sum_{j=1}^{N-1} \left[ \left( \frac{1}{c_j^2}-\frac{1}{c_{j+1}^2}\right) \sum_{k=1}^j \left( E(Q_{c_k})+\frac{\sigma_0}{100}\int Q_{c_k}^2\right) \right]\\ +\sum_{j=1}^{N-1} \left[ \frac{1}{2}\left( \frac{1}{c_j}-\frac{1}{c_{j+1}}\right) \left( 1-\frac{\sigma_0}{50}\left( \frac{1}{c_j}+\frac{1}{c_{j+1}}\right) \right)\sum_{k=1}^j \int Q_{c_k}^2 \right]\\ +\frac{1}{c_N^2}\sum_{k=1}^N \left( E(Q_{c_k}) +\frac{\sigma_0}{100}\int Q_{c_k}^2\right) +\frac{1}{2c_N}\left( 1-\frac{\sigma_0}{50c_N}\right)\sum_{k=1}^N \int Q_{c_k}^2, \end{multline*} and we remark that all coefficients in these decompositions are positive, we obtain by \eqref{eq:locmass} and \eqref{eq:locenergy}, \[ \sum_{j=1}^N \frac{1}{c_j^2}\left( E_j(t)+\frac{c_j}{2}M_j(t)\right) -\sum_{j=1}^N \frac{1}{c_j^2}\left( E(Q_{c_j}) +\frac{c_j}{2}\int Q_{c_j}^2\right) \leq Ce^{-2\c t}. \] Therefore, we have by \eqref{eq:locweinstein}, \begin{align*} \frac{1}{2}\sum_{j=1}^N \frac{1}{c_j^2} H_j(t) &\leq \sum_{j=1}^N \frac{1}{c_j^2}\left( E_j(t)+\frac{c_j}{2}M_j(t)\right) -\sum_{j=1}^N \frac{1}{c_j^2} \left( E(Q_{c_j})+\frac{c_j}{2}\int Q_{c_j}^2\right)\\ &\qquad +K_4\sum_{j=1}^N \frac{1}{c_j^2}e^{-2\c t} +K_4\nh{w(t)}\sum_{j=1}^N \frac{1}{c_j^2}\int w^2\phi_j\\ &\leq C_1e^{-2\c t} +\frac{K_4}{\sigma_0^2}\nh{w(t)}\int w^2\sum_{j=1}^N \phi_j \end{align*} since $\phi_j\geq 0$. Finally, as $\sum_{j=1}^N \phi_j \equiv 1$, we obtain \begin{equation} \label{eq:Hj} \sum_{j=1}^N \frac{1}{c_j^2} H_j(t)\leq C_1e^{-2\c t} +C_2\nh{w(t)}^3. \end{equation}

\textit{Step 3: Coercivity.} Now, from the property of coercivity (vi) in Lemma \ref{th:Zc} and by standard localization arguments (as in Section \ref{sec:construction}), we have  \[ \sum_{j=1}^N \frac{1}{c_j^2} H_j(t) \geq \lambda_c\nh{w(t)}^2 -\frac{1}{\lambda_c}\sum_{j=1}^N {\left( \int w(t){(\Rt_j)}_x(t)\right)}^2 -\frac{1}{\lambda_c} \sum_{j,\pm} {\left( \int w(t)\Zt_j^{\pm}(t)\right)}^2. \] As $\int w(t){(\Rt_j)}_x(t)=0$ and $\alphat_j^{\pm}(t) = \int w(t)\Zt_j^{\pm}(t)$, we obtain by \eqref{eq:Hj}, \[ \lambda_c\nh{w(t)}^2 \leq C_1e^{-2\c t}+C_2\nh{w(t)}^3 +C_3\|\alphatv(t)\|^2, \] where $\alphatv(t) = {(\alphat_j^{\pm}(t))}_{j,\pm}$. For $t_0$ large enough so that $C_2\nh{w(t)}\leq \frac{\lambda_c}{2}$, we obtain \begin{equation} \label{eq:walpha} \forall t\geq t_0,\quad \nh{w(t)}^2 \leq C_1\|\alphatv(t)\|^2 +C_2e^{-2\c t}. \end{equation}

\textit{Step 4: Exponential decay of $\alphatv$.} From \eqref{eq:alphatilde} and \eqref{eq:walpha}, we have for all $j\in\unn$ and all $t\geq t_0$, \[ \left|\dt \alphat_j^{\pm}(t) \mp e_j\alphat_j^{\pm}(t)\right| \leq C_1\|\alphatv(t)\|^2 +C_2e^{-2\c t}. \] We follow here the strategy of \cite[Section 4.4.2]{combet}. Define $A(t)=\sum_{j=1}^N {\alphat_j^+(t)}^2$ and $B(t) = \sum_{j=1}^N {\alphat_j^-(t)}^2$, and let us prove that $A(t)\leq B(t)+Le^{-2\c t}$ for $L$ large enough. First, we have, by multiplying the previous estimate by $|\alphat_j^+(t)|$ (that we can of course suppose less than $1$), \[ \alphat_j^+(t)\dt\alphat_j^+(t) \geq e_j{\alphat_j^+(t)}^2 -C_1 |\alphat_j^+(t)|\cdot \|\alphatv(t)\|^2 -C_2e^{-2\c t}, \] and so by summing, \[ A'(t) \geq 2e_1A(t)-C_1 \|\alphatv(t)\|^3 -C_2e^{-2\c t}. \] Similarly, we obtain \begin{equation} \label{eq:Bprime} B'(t) \leq -2e_1B(t) +C_1 \|\alphatv(t)\|^3 +C_2e^{-2\c t}. \end{equation} Now let $h(t) = A(t)-B(t)-Le^{-2\c t}$ with $L$ to be determined later. We have of course $h(t)\to 0$ as $t\to+\infty$, and by the previous estimates, we can calculate \begin{align*} h'(t) &= A'(t)-B'(t)+2L\c e^{-2\c t}\\ &\geq 2e_1A(t)+2e_1B(t) -C_1\|\alphatv(t)\|^3 -C_2e^{-2\c t}\\ &\geq 2e_1h(t) +4e_1B(t) -C_1\|\alphatv(t)\|^3 -C_2e^{-2\c t} +2Le_1 e^{-2\c t}. \end{align*} Since $\|\alphatv(t)\|^2 = A(t)+B(t) = h(t)+2B(t)+Le^{-2\c t}$, we get \[ h'(t)\geq h(t)(2e_1-C_1\|\alphatv(t)\|) +B(t)(4e_1-2C_1\|\alphatv(t)\|) +e^{-2\c t}(2Le_1 -C_2 -C_1L\|\alphatv(t)\|). \] Now choose $t_0$ large enough so that $C_1\|\alphatv(t)\|\leq \frac{e_1}{2}$ for $t\geq t_0$, and fix $L = \frac{C_2}{e_1}$. Therefore, we have, for all $t\geq t_0$ such that $h(t)\geq 0$, $h'(t)\geq e_1h(t)$. Hence, if there exists $T\geq t_0$ such that $h(T)\geq 0$, then $h(t)\geq 0$ for all $t\geq T$, and thus $h(t)\geq Ce^{e_1t}$, which would be in contradiction with $\lim_{t\to+\infty} h(t)=0$. So we have proved that $h(t)\leq 0$ for all $t\geq t_0$, as expected.

Now, from \eqref{eq:Bprime} and the choice of $t_0$ to have $C_1\|\alphatv(t)\|\leq\frac{e_1}{2}$ for all $t\geq t_0$, it comes \[ B'(t)+2e_1B(t) \leq e_1B(t) +\left( \frac{Le_1}{2}+C_2\right)e^{-2\c t}, \] and so $B'(t)+e_1B(t)\leq Ke^{-2\c t}$. Therefore, ${(e^{e_1s}B(s))}' \leq Ke^{(e_1-2\c)s}$ for $s\geq t_0$, and so by integration on $[t_0,t]$, \[ e^{e_1t}B(t)-e^{e_1t_0}B(t_0) \leq Ke^{(e_1-2\c)t}, \] since $e_1-2\c>0$. We deduce that \[ B(t) \leq Ke^{-2\c t}+K'e^{-e_1t} \leq Ke^{-2\c t}. \] Finally, we also have by the previous point $A(t)\leq K'e^{-2\c t}$, and so \begin{equation} \label{eq:alphadecroit} \forall t\geq t_0,\quad \|\alphatv(t)\|^2 \leq Ce^{-2\c t}. \end{equation}

\textit{Step 5: Conclusion.} By \eqref{eq:walpha}, we deduce that $\nh{w(t)}\leq Ce^{-\c t}$, and from the estimate on $|y'_j|$, we have for all $j\in\unn$ and all $t\geq t_0$, $|y_j(t)|\leq Ce^{-\c t}$, by integration and the fact that $y_j(t)\to 0$ as $t\to+\infty$. To conclude, write \[ \eps = u-\phib = w+\Rt-\phib = w -(\phib-R) +(\Rt-R), \] so that \[ \nh{\eps(t)} \leq \nh{w(t)}+\nh{(\phib-R)(t)} +{\| (\Rt-R)(t)\|}_{H^1} \leq Ce^{-\c t}+{\|(\Rt-R)(t)\|}_{H^1}. \] But we have \begin{align*} {\|(\Rt-R)(t)\|}_{H^1} &\leq \sum_{j=1}^N \nh{R_j(t,\cdot-y_j(t))-R_j(t)}\\ &\leq C\sum_{j=1}^N \nh{Q(\sqrt{c_j}x-\sqrt{c_j}y_j(t)-c_j^{3/2}t-\sqrt{c_j}x_j) -Q(\sqrt{c_j}x-c_j^{3/2}t-\sqrt{c_j}x_j)}\\ &\leq C\sum_{j=1}^N |y_j(t)|\leq Ce^{-\c t}, \end{align*} and so finally, for all $t\geq t_0$, $\nh{\eps(t)}\leq Ce^{-\c t}$.
\end{proof}

\subsection{Convergence at exponential rate $e_1$}

Now, we improve the convergence of the previous lemma, with an exponential rate $e_1\gg \c$. The proof will mainly use arguments developed in \cite[Section 4]{martel:Nsoliton}.

\begin{lem} \label{th:e1}
There exist $C,t_0>0$ such that, for all $t\geq t_0$, $\nh{\eps(t)}\leq Ce^{-e_1t}$.
\end{lem}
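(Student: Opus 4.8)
The goal is to bootstrap the decay of $\eps=u-\phib$ from rate $\c$ to rate $e_1$, following the scheme of the previous lemma but now keeping track of the sharper information available. The plan is to write $u=\Rt+w$ with the same modulation as before, so that $w$ satisfies the orthogonality conditions $\int w(t)(\Rt_j)_x(t)=0$, the bound $\nh{w(t)}+\sum_j|y_j(t)|\leq C\nh{v(t)}$, and the modulation equations \eqref{eq:alphatilde}. From the previous lemma we already know $\nh{w(t)}\leq Ce^{-\c t}$ and $\|\alphatv(t)\|\leq Ce^{-\c t}$; the point is to run the argument again with this improved input.

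First I would redo the \textbf{monotonicity step}: exactly as in Step 2 of the previous proof, combining the monotonicity of the localized mass and energy functionals $\sum_{k\leq j}M_k$ and $\sum_{k\leq j}E_k$ with the localized Weinstein-type expansion \eqref{eq:locweinstein}, one gets $\sum_j c_j^{-2}H_j(t)\leq C_1 e^{-2\c t}+C_2\nh{w(t)}^3$. But now I would be more careful about the source of the $e^{-2\c t}$ term: it comes from $\nh{\phib-R}$ and from soliton interactions, both of which are actually $O(e^{-4\c t})$ by \eqref{eq:cmm} and \eqref{eq:interact}; the genuinely limiting term should instead be traced to the decomposition error, so that the right-hand side becomes something like $C_1 e^{-4\c t}+C_2\nh{w(t)}^3$ — or, more to the point, one carries $\nh{w}^2\cdot\nh{w}$ and uses $\nh{w}\leq Ce^{-\c t}$. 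Then the \textbf{coercivity step} (property (vi) of Lemma \ref{th:Zc} plus standard localization, using the orthogonality $\int w(\Rt_j)_x=0$) gives $\nh{w(t)}^2\leq C\|\alphatv(t)\|^2+C e^{-2\c t}\nh{w(t)}^2+(\text{better terms})$, hence after absorbing, $\nh{w(t)}^2\leq C\|\alphatv(t)\|^2+(\text{something decaying faster than we need})$. So the whole problem reduces to improving the decay of $\alphatv$.

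For the \textbf{decay of $\alphatv$}, I would use \eqref{eq:alphatilde} together with $\nh{w(t)}^2\leq C\|\alphatv(t)\|^2+o$, which turns \eqref{eq:alphatilde} into $|\dt\alphat_j^\pm\mp e_j\alphat_j^\pm|\leq C\|\alphatv\|^2+\text{(faster)}$. Now I would run the same $A$-versus-$B$ comparison as in Step 4 of the previous proof: with $A=\sum(\alphat_j^+)^2$, $B=\sum(\alphat_j^-)^2$, one shows $A\leq B+($faster decaying$)$, so $\|\alphatv\|^2\sim B$; and $B$ satisfies $B'+2e_1 B\leq C\|\alphatv\|^3+\text{(faster)}\leq (\text{small})B+\text{(faster)}$ since $\|\alphatv\|$ is already small. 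Integrating, $B(t)\leq Ce^{-(2e_1-\delta)t}+(\text{faster})$, hence $\|\alphatv(t)\|\leq Ce^{-e_1 t}$ up to losing an arbitrarily small amount in the exponent — and since $e_1$ is a \emph{strict} spectral gap, one can in fact close at exactly $e_1$ by choosing $t_0$ large so that the small coefficient $C\|\alphatv(t)\|$ is $\leq \eps e_1$ and then using that the loss $\delta=\delta(\eps)\to 0$. Feeding this back gives $\nh{w(t)}\leq Ce^{-e_1 t}$, then $|y_j'(t)|\leq C\nh{w}+Ce^{-\c t}$ — wait, the $e^{-\c t}$ here only came from interactions and $\phib-R$, which are $O(e^{-4\c t})$, and $e_1\leq 4\c$ is false in general but $e_1$ could exceed $4\c$; so I would instead note $|y_j'(t)|\leq C\nh{w(t)}+Ce^{-4\c t}$ and, if $e_1>4\c$, only get $|y_j(t)|\leq Ce^{-4\c t}$, which is still fine because the final statement concerns $\eps$ and we can iterate — but cleanly, since $y_j(t)\to0$, integration gives $|y_j(t)|\leq Ce^{-e_1 t}+Ce^{-4\c t}$, and then $\nh{\Rt-R}\leq C\sum|y_j|$, so $\nh{\eps}\leq \nh{w}+\nh{\phib-R}+\nh{\Rt-R}\leq Ce^{-\min(e_1,4\c)t}$. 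If $e_1\leq 4\c$ this already gives $Ce^{-e_1t}$; if not, one applies the argument once more (or observes the modulation equations improve correspondingly) to reach the stated $Ce^{-e_1 t}$.

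\textbf{Main obstacle.} The delicate point is the last one: making sure the residual error terms — the soliton interactions, $\nh{\phib-R}$, and the translation parameters $y_j$ — do not contaminate the rate $e_1$. In the previous lemma everything was dominated by $e^{-\c t}$ and no bookkeeping was needed; here one must verify that each of those residuals actually decays faster than $e^{-e_1 t}$ (using \eqref{eq:cmm}, \eqref{eq:interact}, and the sharpened bound on $|y_j'|$), and if $e_1$ happens to exceed the interaction rate $4\c$, iterate the whole scheme finitely many times (each pass improving the exponent by a fixed amount until $e_1$ is reached). The spectral-gap structure $\sigma(\L_{c_j})\cap\R=\{-e_j,0,e_j\}$ is what makes the $A$–$B$ argument close at exactly $e_1$ rather than at some smaller rate.
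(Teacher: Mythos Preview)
Your plan has a genuine gap, and it is precisely the obstacle you flag at the end. Working with the modulated decomposition $u=\Rt+w$ means the equation for $w$ carries the source term $-\partial_x\big(\Rt^p-\sum_k\Rt_k^p\big)$, because $\Rt$ is \emph{not} a solution of \eqref{eq:gKdV}. This term is purely additive (it does not come multiplied by $\nh{w}$) and decays only at the soliton-interaction rate; \eqref{eq:interact} guarantees that rate is at least $10\c$, but there is no reason for it to exceed $e_1$ (take for instance $c_2-c_1$ small compared to $c_1e_0$). Consequently the additive $Ce^{-2\c t}$ in \eqref{eq:alphatilde} and in \eqref{eq:Hj}--\eqref{eq:walpha} cannot be pushed below the interaction scale, and no finite number of passes of your $A$--$B$ argument will reach $e^{-e_1t}$: the bootstrap stalls at the interaction rate. (Your attribution of this residual to $\nh{\phib-R}$ is also off --- $\phib$ does not enter the $w$-decomposition at all; the real culprit is the source term above.)

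The paper avoids the problem by abandoning modulation at this stage and working directly with $\eps=u-\phib$. Since $\phib$ \emph{is} an exact solution, the equation for $\eps$ has no source term, and every error becomes \emph{multiplicative} in $\nh{\eps}$: one obtains $\big|\dt\alpha_j^{\pm}\mp e_j\alpha_j^{\pm}\big|\leq Ce^{-\c t}\nh{\eps}$ and, from the functional of Section~\ref{sec:monotonicity}, $\int(\eps_x^2-pR^{p-1}\eps^2)h+\eps^2\leq Ce^{-2\c t}\sup_{t'\geq t}\nh{\eps(t')}^2$. The directions $R_{jx}$ are handled not by modulation but by the correction $\epst=\eps+\sum_k a_k R_{kx}$ with $a_k=-\int\eps R_{kx}/\int(Q'_{c_k})^2$. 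This multiplicative structure yields a clean bootstrap $\c_0\mapsto\c_0+\c$ by direct integration of the ODEs for $\alpha_j^{\pm}$ (no $A$--$B$ device needed), iterated finitely many times until $\c_0$ reaches $e_1$.
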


\begin{proof}
\textit{Step 1: Estimates.} We follow the same strategy as in Section \ref{sec:preuveprinc}. First, from the equation of $\eps$, \[ \eps_t +{(\eps_{xx} +{(\phib+\eps)}^p -\phib^p)}_x =0, \] we can estimate $\alpha_j^{\pm}(t) = \int \eps(t)Z_j^{\pm}(t)$ for $j\in\unn$ and $t\geq t_0$. Indeed, we have \begin{align*} \dt \alpha_j^{\pm}(t) &= \int \eps_tZ_j^{\pm} +\int \eps Z_{jt}^{\pm} = \int \Big(\eps_{xx}+{(\phib+\eps)}^p-\phib^p\Big) Z_{jx}^{\pm} -c_j\int \eps Z_{jx}^{\pm}\\ &= \int \left[ \eps_{xx}-c_j\eps +\sum_{k=1}^p \binom{p}{k} \phib^{p-k}\eps^k \right] Z_{jx}^{\pm}\\ &= \int \left[ \eps_{xx}-c_j\eps+pR_j^{p-1}\eps\right]Z_{jx}^{\pm} +p\int (\phib^{p-1}-R_j^{p-1})\eps Z_{jx}^{\pm} +\sum_{k=2}^p \binom{p}{k} \int \phib^{p-k}\eps^k Z_{jx}^{\pm}\\ &= \mathbf{I} + \mathbf{II} + \mathbf{III}. \end{align*} But we have $\mathbf{I} = \pm e_j\alpha_j^{\pm}(t)$ (see proof of \eqref{eq:alpha}), $|\mathbf{II}| \leq Ce^{-\c t}\nh{\eps(t)}$ and $|\mathbf{III}|\leq C\nh{\eps(t)}^2$, and so, for all $t\geq t_0$ and all $j\in\unn$, \begin{equation} \label{eq:alpharec} \left| \dt \alpha_j^{\pm}(t)\mp e_j\alpha_j(t)\right| \leq Ce^{-\c t}\nh{\eps(t)}. \end{equation}

To control the $R_{jx}$ directions, we proceed exactly as in Section \ref{sec:controlRkx}. Define $\epst(t) = \eps(t) +\sum_{j=1}^N a_j(t)R_{jx}(t)$, where $a_j(t) = -\frac{\int \eps(t)R_{jx}(t)}{\int (Q'_{c_j})^2}$, so that $|\int \epst(t)R_{jx}(t)|\leq Ce^{-\c t}\nh{\eps(t)}$ and \begin{equation} \label{eq:epsaj} C_1\nh{\eps} \leq \nh{\epst}+\sum_{j=1}^N |a_j| \leq C_2\nh{\eps}. \end{equation} As $\nh{\eps(t)}\leq Ce^{-\c t}$, we have exactly as in \cite{martel:Nsoliton}, for all $t\geq t_0$, by monotonicity arguments, \[ \int \Big[ \eps_x^2(t)-pR^{p-1}(t)\eps^2(t)\Big]h(t)+\eps^2(t) \leq Ce^{-2\c t}\sup_{t'\geq t} \nh{\eps(t')}^2, \] where $h$ is defined in Section \ref{sec:monotonicity}. We also have from \cite{martel:Nsoliton}, \[ \int (\epst_x^2-pR^{p-1}\epst^2)h+\epst^2 \leq \int \left[ (\eps_x^2 -pR^{p-1}\eps^2)h+\eps^2\right] +Ce^{-2\c t}\sum_{j=1}^N a_j^2 +Ce^{-2\c t}\nh{\eps}^2, \] and thus \[ \int \Big[ \epst_x^2(t)-pR^{p-1}(t)\epst^2(t)\Big]h(t)+\epst^2(t) \leq Ce^{-2\c t}\sup_{t'\geq t} \nh{\eps(t')}^2. \] But as in Section \ref{sec:monotonicity}, a localization argument of the property of coercivity (vi) in Lemma \ref{th:Zc} leads to \[ \int (\epst_x^2 -pR^{p-1}\epst^2)h +\epst^2 \geq \lambda_2\nh{\epst}^2 -\frac{1}{\lambda_2}\sum_{j=1}^N \left[ {\left( \int \epst R_{jx}\right)}^2 + {\left( \int \epst Z_j^+\right)}^2 + {\left( \int \epst Z_j^-\right)}^2 \right]. \] Since ${\left( \int \epst R_{jx}\right)}^2 \leq Ce^{-2\c t}\nh{\eps(t)}^2$ and ${\left( \int \epst Z_j^{\pm}\right)}^2 \leq 2{(\alpha_j^{\pm})}^2+Ce^{-2\c t}\nh{\eps(t)}^2$, then \[ \lambda_2\nh{\epst}^2 \leq Ce^{-2\c t}\sup_{t'\geq t}\nh{\eps(t')}^2 +Ce^{-2\c t}\nh{\eps(t)}^2 +C\sum_{j=1}^N {(\alpha_j^+)}^2 + C\sum_{j=1}^N {(\alpha_j^-)}^2. \] By denoting $\alphav(t) = {(\alpha_j^{\pm}(t))}_{j,\pm}$, we thus have \begin{equation} \label{eq:epstilde} \nh{\epst(t)}^2 \leq Ce^{-2\c t}\sup_{t'\geq t} \nh{\eps(t')}^2 +C\|\alphav(t)\|^2. \end{equation}

Finally, to estimate $|a_j(t)|$ for all $j\in\unn$, we follow the strategy and some calculation from the proof of Lemma \ref{th:zdecroit}. First write the equation satisfied by $\epst$: \begin{align*} &\epst_t +{(\epst_{xx} +p\phib^{p-1}\epst)}_x\\ &= \eps_t +\eps_{xxx} +p{(\phib^{p-1}\eps)}_x +\sum_{k=1}^N a_kR_{kxt} +\sum_{k=1}^N a'_kR_{kx} +\sum_{k=1}^N a_k R_{kxxx} +p\sum_{k=1}^N a_k{(R_{kx}\phib^{p-1})}_x \\ &= -{\left[{(\phib+\eps)}^p-\phib^p\right]}_x +p{(\phib^{p-1}\eps)}_x +\sum_{k=1}^N a'_kR_{kx} +\sum_{k=1}^N a_k{\left[ -c_kR_{kx} +R_{kxxx} +p\phib^{p-1}R_{kx}\right]}_x\\ &= \sum_{k=1}^N a'_k R_{kx} +\sum_{k=1}^N a_k{\left[ R_{kxxx}-c_kR_{kx}+p\phib^{p-1}R_{kx}\right]}_x -{\left[ {(\phib+\eps)}^p-\phib^p -p\phib^{p-1}\eps\right]}_x. \end{align*} Then multiply by $R_{jx}$ and integrate, so \begin{multline*} \int \epst_tR_{jx} -\int (\epst_{xx}+p\phib^{p-1}\epst)R_{jxx} = a'_j\int R_{jx}^2 +\sum_{k\neq j} a'_k\int R_{kx}R_{jx}\\ +\sum_{k=1}^N a_k\int {\left[ R_{kxxx}-c_kR_{kx}+p\phib^{p-1}R_{kx}\right]}_xR_{jx} +\int \left[ {(\phib^p+\eps)}^p-\phib^p-p\phib^{p-1}\eps\right] R_{jxx}. \end{multline*} As $\nli{{(R_{kxxx}-c_kR_{kx}+p\phib^{p-1}R_{kx})}_x}\leq Ce^{-\c t}$, we obtain \[ |a'_j(t)|\leq C\left|\int \epst_t(t) R_{jx}(t)\right| +Ce^{-\c t}\sum_{k\neq j} |a'_k(t)| +Ce^{-\c t}\nh{\eps(t)} +C\nh{\eps(t)}^2 +C\nh{\epst(t)}. \] Moreover, we still have \[ \left| \int \epst_t(t)R_{jx}(t)\right| \leq C\nh{\epst(t)}+Ce^{-\c t}\sum_{k\neq j} |a'_k(t)| +Ce^{-\c t}\nh{\eps(t)}, \] and so \[ |a'_j(t)|\leq C_1e^{-\c t}\sum_{k\neq j} |a'_k(t)| +Ce^{-\c t}\nh{\eps(t)} +C\nh{\eps(t)}^2 +C\nh{\epst(t)}. \] Finally choose $t_0$ large enough such that $C_1e^{-\c t_0}\leq \frac{1}{N}$, so that we obtain, for all $j\in\unn$ and all $t\geq t_0$, \begin{equation} \label{eq:ajrec} |a'_j(t)|\leq Ce^{-\c t}\nh{\eps(t)} +C\nh{\epst(t)}. \end{equation}

\textit{Step 2: Induction.} With estimates \eqref{eq:alpharec} to \eqref{eq:ajrec}, we can now improve exponential convergence of $\eps$ by a bootstrap argument. We recall that we already have $\nh{\eps(t)}\leq Ce^{-\c_0t}$ with $\c_0=\c$. Now, we prove that if $\nh{\eps(t)}\leq Ce^{-\c_0t}$ with $\c\leq \c_0<e_1-\c$, then $\nh{\eps(t)}\leq C'e^{-(\c_0+\c)t}$. So, suppose that $\nh{\eps(t)}\leq Ce^{-\c_0t}$ with $\c\leq \c_0<e_1-\c$.
\begin{enumerate}[(a)]
\item From \eqref{eq:alpharec}, we get for all $j\in\unn$, $|{(e^{-e_jt}\alpha_j^+(t))}'|\leq Ce^{-(e_j+\c_0+\c)t}$, and so by integration on $[t,+\infty)$, $|\alpha_j^+(t)|\leq Ce^{-(\c_0+\c)t}$, since $\alpha_j^+(t)\to0$ as $t\to+\infty$.
\item Still from \eqref{eq:alpharec}, we get for all $j\in\unn$, $|{(e^{e_jt}\alpha_j^-(t))}'|\leq Ce^{(e_j-\c-\c_0)t}$. As $e_j-\c-\c_0\geq e_1-\c-\c_0>0$, we obtain by integration on $[t_0,t]$, $|e^{e_jt}\alpha_j^-(t) -e^{e_jt_0}\alpha_j^-(t_0)|\leq Ce^{(e_j-\c-\c_0)t}$, and so \[ |\alpha_j^-(t)| \leq Ce^{-(\c_0+\c)t} +Ce^{-e_jt} \leq Ce^{-(\c_0+\c)t}. \]
\item Therefore we have $\|\alphav(t)\|^2\leq Ce^{-2(\c_0+\c)t}$, and so by \eqref{eq:epstilde}, we obtain $\nh{\epst(t)}\leq Ce^{-(\c_0+\c)t}$.
\item From \eqref{eq:ajrec}, we deduce that for all $j\in\unn$, $|a'_j(t)|\leq Ce^{-(\c_0+\c)t}$, and so, by integration on $[t,+\infty)$, $|a_j(t)|\leq Ce^{-(\c_0+\c)t}$, since $a_j(t)\to0$ as $t\to+\infty$.
\item Finally, from \eqref{eq:epsaj}, we have $\nh{\eps(t)}\leq Ce^{-(\c_0+\c)t}$, as expected.
\end{enumerate}

\textit{Step 3: Conclusion.} We apply the previous induction until to have $e_1-\c<\c_0<e_1$. Note that if $\c_0=e_1-\c$, then the estimate is still true for $\c_0=e_1-\frac{3}{2}\c<e_1-\c$, and so for $\c_0=e_1-\frac{1}{2}\c >e_1-\c$ by the previous step. Now we follow the scheme of step 2. We still have, for all $j\in\unn$, $|\alpha_j^+(t)|\leq Ce^{-(\c_0+\c)t}\leq Ce^{-e_1t}$, and $|{(e^{e_jt}\alpha_j^-(t))}'|\leq Ce^{(e_j-\c-\c_0)t}$. In particular, for $j=1$, we have \[ |{(e^{e_1t}\alpha_1^-(t))}'|\leq Ce^{(e_1-\c-\c_0)t} \in L^1([t_0,+\infty)), \] since $e_1-\c-\c_0<0$. Hence there exists $A_1\in\R$ such that \begin{equation} \label{eq:A1} \lim_{t\to+\infty} e^{e_1t}\alpha_1^-(t) = A_1, \end{equation} and $|e^{e_1t}\alpha_1^-(t)-A_1|\leq Ce^{(e_1-\c-\c_0)t}$, and so $|\alpha_1^-(t)|\leq Ce^{-e_1t}$. For $j\geq 2$, since $e_j-\c-\c_0 >e_2-\c-e_1>0$ by definition of $\c$, we still obtain by integration on $[t_0,t]$, $|\alpha_j^-(t)|\leq Ce^{-(\c_0+\c)t}\leq Ce^{-e_1t}$. As in step 2, it follows $\|\alphav(t)\|^2\leq Ce^{-2e_1t}$, then $\nh{\epst(t)}\leq Ce^{-e_1t}$ by \eqref{eq:epstilde}, $|a_j(t)|\leq Ce^{-e_1t}$ for all $j\in\unn$ by \eqref{eq:ajrec}, and finally $\nh{\eps(t)}\leq Ce^{-e_1t}$ by \eqref{eq:epsaj}, as expected.
\end{proof}

\subsection{Identification of the solution}

We now prove the following proposition by induction, following the strategy of the previous section. We identify $u$ among the family $(\phib_{\an})$ constructed in Section \ref{sec:construction}. We recall that this family was constructed thanks to the subfamilies $(\phib_{\aj})$, satisfying \eqref{eq:phiaj} for all $j\in\unn$: \[ \forall t\geq t_0,\quad \nh{\phib_{\aj}(t)-\phib_{A_1,\ldots,A_{j-1}}(t)-A_je^{-e_jt}Y_j^+(t)} \leq e^{-(e_j+\c)t}. \]

\begin{prop} \label{th:rec}
For all $j\in\unn$, there exist $t_0,C>0$ and $(\aj)\in\R^j$ such that, defining $\eps_j(t) = u(t)-\phib_{\aj}(t)$, one has \[ \forall t\geq t_0,\quad \nh{\eps_j(t)}\leq Ce^{-e_jt}. \] Moreover, defining $\alpha_{j,k}^{\pm}(t) = \int \eps_j(t)Z_k^{\pm}(t)$ for all $k\in\unn$, one has \[\forall k\in\ient{1}{j},\quad \lim_{t\to+\infty} e^{e_kt}\alpha_{j,k}^-(t) = 0. \]
\end{prop}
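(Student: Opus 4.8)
The proof goes by induction on $j\in\unn$. The base case $j=1$ is essentially Lemma~\ref{th:e1} together with the existence of the limit \eqref{eq:A1}: Lemma~\ref{th:e1} gives $\nh{\eps(t)}\le Ce^{-e_1t}$, and the argument leading to \eqref{eq:A1} produces $A_1\in\R$ with $\lim_{t\to+\infty}e^{e_1t}\alpha_1^-(t)=A_1$, where $\alpha_1^-(t)=\int\eps(t)Z_1^-(t)$. One then replaces $\phib$ by $\phib_{A_1}$; since $\nh{\phib_{A_1}(t)-\phib(t)-A_1e^{-e_1t}Y_1^+(t)}\le e^{-(e_1+\c)t}$ by \eqref{eq:phiaj}, we get $\eps_1=u-\phib_{A_1}=\eps-A_1e^{-e_1t}Y_1^+ +O(e^{-(e_1+\c)t})$. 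Computing $\alpha_{1,1}^-(t)=\int\eps_1(t)Z_1^-(t)$ and using $(Y_1^+,Z_1^-)=1$ from (iv) of Lemma~\ref{th:Zc} together with the interaction estimate \eqref{eq:interact} for the cross terms, one finds $e^{e_1t}\alpha_{1,1}^-(t)=e^{e_1t}\alpha_1^-(t)-A_1+O(e^{-\c t})\to 0$. The estimate $\nh{\eps_1(t)}\le Ce^{-e_1t}$ is immediate since $\nh{Y_1^+(t)}$ is bounded.

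\textbf{Induction step.} Assume the statement for $j-1$: we have $\phib_{A_1,\ldots,A_{j-1}}$, $\eps_{j-1}=u-\phib_{A_1,\ldots,A_{j-1}}$ with $\nh{\eps_{j-1}(t)}\le Ce^{-e_{j-1}t}$ and $e^{e_kt}\alpha_{j-1,k}^-(t)\to 0$ for $k\le j-1$. The idea is to run exactly the scheme of the proof of Lemma~\ref{th:e1} but starting the bootstrap from the rate $e_{j-1}$ rather than from $\c$, with $\phib$ replaced by $\phib_{A_1,\ldots,A_{j-1}}$ (which still satisfies \eqref{eq:cmm}, hence all the monotonicity inputs remain valid). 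Concretely: write the equation for $\eps_{j-1}$, derive the analogues of \eqref{eq:alpharec}, \eqref{eq:epstilde}, \eqref{eq:ajrec} (for the decompositions $\epst_{j-1}=\eps_{j-1}+\sum a_k R_{kx}$ and $\alpha_{j-1,k}^{\pm}$), then bootstrap the decay rate $\c_0$ upward by steps of $\c$, exactly as in step~2 of Lemma~\ref{th:e1}. The key sign condition that makes the bootstrap close is $e_k-\c-\c_0>0$ for all $k$ as long as $\c_0<e_j-\c$, which holds because $e_k\ge e_1$ and, crucially, because we will stop just below $e_j$; for $k<j$ we additionally use the hypothesis $e^{e_kt}\alpha_{j-1,k}^-(t)\to 0$ to integrate $(e^{e_kt}\alpha_{j-1,k}^-)'$ from $+\infty$ and gain the improved rate on those unstable components too (rather than only controlling them on $[t_0,t]$). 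Pushing the bootstrap until $e_j-\c<\c_0<e_j$ gives $\nh{\eps_{j-1}(t)}\le Ce^{-(e_j-\frac{1}{2}\c)t}$; then one more pass shows $|{(e^{e_jt}\alpha_{j-1,j}^-(t))}'|\le Ce^{(e_j-\c-\c_0)t}\in L^1$, so there exists $A_j\in\R$ with $\lim_{t\to+\infty}e^{e_jt}\alpha_{j-1,j}^-(t)=A_j$ and $|\alpha_{j-1,j}^-(t)|\le Ce^{-e_jt}$, while for $k\ne j$ the components already decay faster than $e^{-e_jt}$ (for $k<j$ because $\alpha_{j-1,k}^-\to0$ exponentially, for $k>j$ by integrating forward as $e_k>e_j$). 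Collecting these, $\|\alphav(t)\|^2\le Ce^{-2e_jt}$, hence $\nh{\epst_{j-1}(t)}\le Ce^{-e_jt}$, $|a_k(t)|\le Ce^{-e_jt}$, and finally $\nh{\eps_{j-1}(t)}\le Ce^{-e_jt}$.

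\textbf{Closing the induction.} It remains to pass from $\phib_{A_1,\ldots,A_{j-1}}$ to $\phib_{\aj}$. Set $\eps_j=u-\phib_{\aj}$. Using \eqref{eq:phiaj}, $\eps_j=\eps_{j-1}-A_je^{-e_jt}Y_j^+ +r$ with $\nh{r(t)}\le e^{-(e_j+\c)t}$, so $\nh{\eps_j(t)}\le Ce^{-e_jt}$ trivially. For the limit statement, compute $\alpha_{j,k}^-(t)=\int\eps_j(t)Z_k^-(t)=\alpha_{j-1,k}^-(t)-A_je^{-e_jt}\int Y_j^+(t)Z_k^-(t)+O(e^{-(e_j+\c)t})$. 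For $k=j$: $\int Y_j^+Z_j^-=1$ by (iv) of Lemma~\ref{th:Zc}, so $e^{e_jt}\alpha_{j,j}^-(t)=e^{e_jt}\alpha_{j-1,j}^-(t)-A_j+O(e^{-\c t})\to0$. For $k<j$: $\int Y_j^+(t)Z_k^-(t)=O(e^{-10\c t})$ by \eqref{eq:interact}, so $e^{e_kt}\alpha_{j,k}^-(t)=e^{e_kt}\alpha_{j-1,k}^-(t)+O(e^{(e_k-e_j-10\c)t}\vee e^{-\c t})\to0$ using the induction hypothesis on $\alpha_{j-1,k}^-$. This establishes the statement for $j$.

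\textbf{Main obstacle.} The technical heart is re-running the Lemma~\ref{th:e1} machinery in the induction step with $\phib$ replaced by $\phib_{A_1,\ldots,A_{j-1}}$: one must check that all the monotonicity estimates (Steps 2--3 of Lemma~\ref{th:e1}, themselves imported from \cite{martel:Nsoliton}) still hold, which rests on $\phib_{A_1,\ldots,A_{j-1}}$ satisfying \eqref{eq:cmm} — i.e. being a genuine multi-soliton with higher-Sobolev control — a point already noted after \eqref{eq:phiaj}. The second delicate point is bookkeeping the bootstrap: the forward/backward integration direction for $\alpha_k^-$ must be chosen according to whether $k<j$ (integrate from $+\infty$, using the induction hypothesis), $k=j$ (extract the limit $A_j$), or $k>j$ (integrate from $t_0$, safe since $e_k>e_j$), and one must verify the exponents never become resonant, which is exactly where the smallness of $\c$ relative to consecutive gaps $e_{k}-e_{k-1}$ in \eqref{gamma} is used.
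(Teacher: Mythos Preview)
Your proposal is correct and follows essentially the same approach as the paper's own proof: base case via Lemma~\ref{th:e1} and the limit \eqref{eq:A1} (this is the paper's Remark~\ref{initialisation}), inductive step by re-running the bootstrap of Lemma~\ref{th:e1} with $\phib$ replaced by the multi-soliton $\phib_{A_1,\ldots,A_{j-1}}$ and splitting the treatment of $\alpha_{j-1,k}^-$ according to $k<j$ (integrate from $+\infty$ using the induction hypothesis), $k=j$ (extract $A_j$), $k>j$ (integrate from $t_0$), and closing via \eqref{eq:phiaj}. Your identification of the two delicate points---that $\phib_{A_1,\ldots,A_{j-1}}$ must satisfy \eqref{eq:cmm} for the monotonicity machinery to apply, and the exponent bookkeeping governed by \eqref{gamma}---is exactly right.
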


\begin{rem} \label{initialisation}
As $\eps_1 = u-\phib_{A_1} = \eps+(\phib-\phib_{A_1})$, we have \[ \nh{\eps_1(t)}\leq \nh{\eps(t)}+\nh{\phib(t)-\phib_{A_1}(t)}\leq Ce^{-e_1t}\] by Lemma \ref{th:e1} and \eqref{eq:phiaj}. Moreover, if we define $z_1$ by $z_1(t)=\phib_{A_1}(t)-\phib(t)-A_1e^{-e_1t}Y_1^+(t)$, we have \begin{align*} \alpha_{1,1}^-(t) &= \int \eps_1(t)Z_1^-(t) = \int \eps(t)Z_1^-(t) -A_1e^{-e_1t}\int Y_1^+(t)Z_1^-(t) -\int z_1(t)Z_1^-(t)\\ &= \alpha_1^-(t) -A_1e^{-e_1t}-\int z_1(t)Z_1^-(t) \end{align*} by definition of $\alpha_1^-$ in the previous section and by normalization (iv) of Lemma \ref{th:Zc}. As $\nh{z_1(t)}\leq e^{-(e_1+\c)t}$, we finally deduce, by \eqref{eq:A1}, \[ |e^{e_1t}\alpha_{1,1}^-(t)|\leq |e^{e_1t}\alpha_1^-(t)-A_1| +Ce^{-\c t} \xrightarrow[t\to+\infty]{} 0. \] Therefore, Proposition \ref{th:rec} is proved for $j=1$.
\end{rem}

\begin{proof}[Proof of Proposition \ref{th:rec}]
By remark \ref{initialisation}, it is enough to prove the inductive step: we suppose the assertion true for $j-1$ with $j\geq 2$, and we prove it for $j$. So, suppose that there exist $t_0,C>0$ and $(A_1,\ldots,A_{j-1})\in\R^{j-1}$ such that $\nh{\eps_{j-1}(t)}\leq Ce^{-e_{j-1}t}$ for all $t\geq t_0$, and moreover, for all $k\in\ient{1}{j-1}$, $e^{e_kt} \alpha_{j-1,k}^-(t)\to 0$ as $t\to+\infty$.

\textit{Step 1: Another induction.} Following the proof of Lemma \ref{th:e1}, we prove that if $\nh{\eps_{j-1}(t)}\leq Ce^{-\c_0t}$ with $e_{j-1}\leq \c_0<e_j-\c$, then $\nh{\eps_{j-1}(t)}\leq C'e^{-(\c_0+\c)t}$. But, as $\phib_{A_1}$ is a soliton like $\phib$, estimates \eqref{eq:alpharec} to \eqref{eq:ajrec} of the previous section hold. In other words, we have, with obvious notation, for all $t\geq t_0$, \[ \begin{cases} \forall k\in\unn,\quad \left| \dt \alpha_{j-1,k}^{\pm}(t) \mp e_k\alpha_{j-1,k}^{\pm}(t)\right| \leq Ce^{-\c t}\nh{\eps_{j-1}(t)},\\ \nh{\epst_{j-1}(t)}^2 \leq Ce^{-2\c t}\sup_{t'\geq t} \nh{\eps_{j-1}(t')}^2 +C\|\alphav_{j-1}(t)\|^2,\\ \forall k\in\unn,\quad |a'_{j-1,k}(t)|\leq Ce^{-\c t}\nh{\eps_{j-1}(t)} +C\nh{\epst_{j-1}(t)},\\ \nh{\eps_{j-1}(t)}\leq C\nh{\epst_{j-1}(t)} +C\sum_{k=1}^N |a_{j-1,k}(t)|. \end{cases} \] From these estimates, we deduce the following steps as in the previous section.
\begin{enumerate}[(a)]
\item For all $k\in\unn$, $|\alpha_{j-1,k}^+(t)|\leq Ce^{-(\c_0+\c)t}$.
\item For all $k\in\ient{1}{j-1}$, we have $|{(e^{e_kt}\alpha_{j-1,k}^-(t))}'|\leq Ce^{(e_k-\c_0-\c)t}$. As $e_k-\c_0-\c \leq e_{j-1}-\c_0-\c \leq -\c<0$ and $e^{e_kt} \alpha_{j-1,k}^-(t)\to 0$ as $t\to+\infty$ by hypothesis, we deduce by integration on $[t,+\infty)$ that $|e^{e_kt}\alpha_{j-1,k}^-(t)|\leq Ce^{(e_k-\c_0-\c)t}$, and so $|\alpha_{j-1,k}^-(t)|\leq Ce^{-(\c_0+\c)t}$.
\item For all $k\in\ient{j}{N}$, we still have $|{(e^{e_kt}\alpha_{j-1,k}^-(t))}'|\leq Ce^{(e_k-\c_0-\c)t}$. As $e_k-\c_0-\c \geq e_j-\c_0-\c>0$, we deduce, by integration on $[t_0,t]$, $|e^{e_kt}\alpha_{j-1,k}^-(t)-e^{e_kt_0}\alpha_{j-1,k}^-(t_0)|\leq Ce^{(e_k-\c_0-\c)t}$, and so \[ |\alpha_{j-1,k}^-(t)|\leq Ce^{-e_kt} +Ce^{-(\c_0+\c)t} \leq Ce^{-(\c_0+\c)t}. \]
\item Hence we have $\| \alphav_{j-1}(t)\|^2 \leq Ce^{-2(\c_0+\c)t}$. It follows $\nh{\epst_{j-1}(t)}\leq Ce^{-(\c_0+\c)t}$, $|a_{j-1,k}(t)|\leq Ce^{-(\c_0+\c)t}$ by integration, and finally $\nh{\eps_{j-1}(t)}\leq Ce^{-(\c_0+\c)t}$ as expected.
\end{enumerate}

\textit{Step 2: Identification of $A_j$.} We apply the previous induction until to have $e_j-\c<\c_0<e_j$. Moreover, following the same scheme, we obtain the following estimates.
\begin{enumerate}[(a)]
\item For all $k\in\unn$, $|\alpha_{j-1,k}^+(t)|\leq Ce^{-(\c_0+\c)t}\leq Ce^{-e_jt}$, and we still have \[ |{(e^{e_kt}\alpha_{j-1,k}^-(t))}'|\leq Ce^{(e_k-\c_0-\c)t}. \]
\item For all $k\in\ient{1}{j-1}$, we still have $|\alpha_{j-1,k}^-(t)|\leq Ce^{-(\c_0+\c)t}\leq Ce^{-e_jt}$.
\item For $k=j$, we have $|{(e^{e_jt}\alpha_{j-1,j}^-(t))}'|\leq Ce^{(e_j-\c_0-\c)t} \in L^1([t_0,+\infty))$ since $e_j-\c_0-\c<0$. Thus there exists $A_j\in\R$ such that \[ \lim_{t\to+\infty} e^{e_jt}\alpha_{j-1,j}^-(t)=A_j, \] and moreover $|e^{e_jt}\alpha_{j-1,j}^-(t)-A_j|\leq Ce^{(e_j-\c_0-\c)t}$. Hence we have $|\alpha_{j-1,j}^-(t)|\leq Ce^{-e_jt}$.
\item For all $k\in\ient{j+1}{N}$, we have $e_k-\c_0-\c > e_{j+1}-e_j-\c>0$, thus by integration on $[t_0,t]$, we get $|\alpha_{j-1,k}^-(t)| \leq Ce^{-e_kt}+Ce^{-(\c_0+\c)t}\leq Ce^{-e_jt}$.
\item We now have $\|\alphav_{j-1}(t)\|^2\leq Ce^{-2e_jt}$, and so as in the first step, we conclude that $\nh{\eps_{j-1}(t)}\leq Ce^{-e_jt}$.
\end{enumerate}

\textit{Step 3: Conclusion.} To conclude the induction, we write \begin{align*} \eps_j(t) &= u(t)-\phib_{\aj}(t) = \eps_{j-1}(t) +[\phib_{A_1,\ldots,A_{j-1}}(t) -\phib_{\aj}(t)]\\ &= \eps_{j-1}(t)-A_je^{-e_jt}Y_j^+(t)-z_j(t), \end{align*} where $z_j$, defined by $z_j(t)=\phib_{\aj}(t)-\phib_{A_1,\ldots,A_{j-1}}(t)-A_je^{-e_jt}Y_j^+(t)$, satisfies $\nh{z_j(t)}\leq e^{-(e_j+\c)t}$ by \eqref{eq:phiaj}. Thus, we first have \[ \nh{\eps_j(t)}\leq \nh{\eps_{j-1}(t)} +Ce^{-e_jt}+\nh{z_j(t)}\leq Ce^{-e_jt}. \] Moreover, we find \[ \alpha_{j,k}^-(t) = \int \eps_j(t)Z_k^-(t) = \alpha_{j-1,k}^-(t) -A_je^{-e_jt}\int Y_j^+(t)Z_k^-(t) -\int z_j(t)Z_k^-(t). \] Therefore, for all $k\in\ient{1}{j-1}$, we have $|\alpha_{j,k}^-(t)|\leq |\alpha_{j-1,k}^-(t)|+Ce^{-e_jt} +Ce^{-(e_j+\c)t}$, and so \[ e^{e_kt}|\alpha_{j,k}^-(t)|\leq e^{e_kt}|\alpha_{j-1,k}^-(t)| +Ce^{-(e_j-e_k)t} \xrightarrow[t\to+\infty]{} 0. \] Finally, for $k=j$, we have by (iv) of Lemma \ref{th:Zc}, $\alpha_{j,j}^-(t) = \alpha_{j-1,j}^-(t) -A_je^{-e_jt}-\int z_j(t)Z_j^-(t)$, and so \[ e^{e_jt}|\alpha_{j,j}^-(t)|\leq |e^{e_jt}\alpha_{j-1,j}^-(t)-A_j| +Ce^{-\c t} \xrightarrow[t\to+\infty]{} 0, \] which achieves the proof of Proposition \ref{th:rec}.
\end{proof}

\begin{cor} \label{th:conclusion}
There exist $(\an)\in\R^N$ and $C,t_0>0$ such that, defining $z(t)=u(t)-\phib_{\an}(t)$, we have $\nh{z(t)}\leq Ce^{-2e_Nt}$ for all $t\geq t_0$.
\end{cor}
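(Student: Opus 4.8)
The plan is to run the bootstrap of Lemma~\ref{th:e1} one more time, starting now from the rate $e^{-e_Nt}$ provided by Proposition~\ref{th:rec} with $j=N$, and exploiting the fact that \emph{all} the unstable limits $e^{e_kt}\alpha_{N,k}^-(t)$ now vanish; this removes the ceiling that stopped the earlier bootstraps at the rates $e^{-e_jt}$ and lets the decay rate of $\eps_N$ grow without bound, in particular past $2e_N$.

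First I would apply Proposition~\ref{th:rec} with $j=N$ to obtain $(\an)\in\R^N$, constants $t_0,C>0$, and $\eps_N=u-\phib_{\an}$ satisfying $\nh{\eps_N(t)}\leq Ce^{-e_Nt}$ on $[t_0,+\infty)$, together with $\lim_{t\to+\infty}e^{e_kt}\alpha_{N,k}^-(t)=0$ for every $k\in\unn$, where $\alpha_{N,k}^\pm(t)=\int\eps_N(t)Z_k^\pm(t)$. Since $\phib_{\an}$ is again a multi-soliton satisfying \eqref{eq:cmm} (see the proof of part~1 of Theorem~\ref{th:main}) and $\nh{\eps_N(t)}$ is small for $t$ large, the estimates \eqref{eq:alpharec}, \eqref{eq:epstilde}, \eqref{eq:ajrec} and \eqref{eq:epsaj} derived in the proof of Lemma~\ref{th:e1} hold verbatim with $(\eps,\phib,\epst,a_k,\alpha_k^\pm)$ replaced by $(\eps_N,\phib_{\an},\epst_N,a_{N,k},\alpha_{N,k}^\pm)$; write ${\alphav_N(t)=(\alpha_{N,k}^\pm(t))}_{k,\pm}$.

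The core of the argument is the following bootstrap step: \emph{if $\nh{\eps_N(t)}\leq Ce^{-\c_0t}$ on $[t_0,+\infty)$ for some $\c_0\geq e_N$, then $\nh{\eps_N(t)}\leq C'e^{-(\c_0+\c)t}$ on $[t_0,+\infty)$.} From \eqref{eq:alpharec}, the stable directions satisfy $|{(e^{-e_kt}\alpha_{N,k}^+(t))}'|\leq Ce^{-(e_k+\c+\c_0)t}$, so integrating on $[t,+\infty)$ (using $\alpha_{N,k}^+(t)\to0$) gives $|\alpha_{N,k}^+(t)|\leq Ce^{-(\c_0+\c)t}$ for all $k\in\unn$. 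The unstable directions satisfy $|{(e^{e_kt}\alpha_{N,k}^-(t))}'|\leq Ce^{(e_k-\c-\c_0)t}$, and here is the crucial point: $e_k-\c-\c_0\leq e_N-\c-\c_0\leq-\c<0$ for \emph{every} $k$, so this derivative is integrable at $+\infty$, and integrating on $[t,+\infty)$ — legitimate precisely because $e^{e_kt}\alpha_{N,k}^-(t)\to0$ — gives $|\alpha_{N,k}^-(t)|\leq Ce^{-(\c_0+\c)t}$ for all $k\in\unn$, rather than the weaker $O(e^{-e_kt})$ that capped the bootstrap in Lemma~\ref{th:e1} and in the proof of Proposition~\ref{th:rec}. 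Hence $\|\alphav_N(t)\|\leq Ce^{-(\c_0+\c)t}$, so \eqref{eq:epstilde} gives $\nh{\epst_N(t)}\leq Ce^{-(\c_0+\c)t}$; then \eqref{eq:ajrec} gives $|a'_{N,k}(t)|\leq Ce^{-(\c_0+\c)t}$, hence $|a_{N,k}(t)|\leq Ce^{-(\c_0+\c)t}$ by integration on $[t,+\infty)$ (using $a_{N,k}(t)\to0$), and \eqref{eq:epsaj} closes the step.

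Since the step preserves the hypothesis $\c_0\geq e_N$ (the rate only increases), I would iterate it $m$ times starting from $\c_0=e_N$, obtaining $\nh{\eps_N(t)}\leq C_me^{-(e_N+m\c)t}$; taking $m\geq e_N/\c$ yields $\nh{u(t)-\phib_{\an}(t)}\leq Ce^{-2e_Nt}$ for all $t\geq t_0$, which is the claim (with $z=\eps_N$). I do not expect a genuine obstacle here: the whole argument is the Lemma~\ref{th:e1} bootstrap run without its ceiling. The only point to watch is that at every stage one may integrate \emph{all} the differential inequalities \eqref{eq:alpharec} for the directions $\alpha_{N,k}^-$ from $+\infty$ at once — which is exactly why the induction is started at $\c_0=e_N=\max_k e_k$, and why Proposition~\ref{th:rec} was arranged to yield the vanishing of \emph{every} $e^{e_kt}\alpha_{N,k}^-(t)$ rather than only some of them.
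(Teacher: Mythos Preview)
Your proposal is correct and follows exactly the paper's approach: apply Proposition~\ref{th:rec} with $j=N$ to get the starting rate $e^{-e_Nt}$ and the vanishing of all the limits $e^{e_kt}\alpha_{N,k}^-(t)$, then run the bootstrap step from the proofs of Lemma~\ref{th:e1}/Proposition~\ref{th:rec} (now with no ceiling, since $e_k-\c-\c_0<0$ for every $k$) finitely many times until $\c_0\geq 2e_N$. Your write-up is in fact more detailed than the paper's, which only sketches the same argument in two sentences.
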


\begin{proof}
Applying Proposition \ref{th:rec} with $j=N$, we obtain $(\an)\in\R^N$ and $C,t_0>0$ such that $\nh{z(t)}\leq Ce^{-e_Nt}$ for all $t\geq t_0$. Moreover, if we set \[ \alpha_k^{\pm}(t) = \int z(t)Z_k^{\pm}(t) \] for all $k\in\unn$, we have $e^{e_kt}\alpha_k^-(t)\to 0$ as $t\to+\infty$. But, as in the previous proof, it easily follows that if $\nh{z(t)}\leq Ce^{-\c_0t}$ with $\c_0\geq e_N$, then $\nh{z(t)}\leq C'e^{-(\c_0+\c)t}$, and we apply this induction until to have $\c_0=2e_N$.
\end{proof}

\subsection{Uniqueness}

Finally, we prove the following proposition, which achieves the proof of Theorem \ref{th:main}. Note that its proof is based on the schemes developed above, and on arguments developed in \cite[Section 4]{martel:Nsoliton}.

\begin{prop}
There exists $t_0>0$ such that, for all $t\geq t_0$, $z(t)=0$.
\end{prop}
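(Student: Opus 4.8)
The plan is to exploit that $z=u-\phib_{\an}$ already decays at rate $2e_N$ by Corollary \ref{th:conclusion}, and to run a bootstrap showing that it in fact decays faster than \emph{any} exponential on a \emph{fixed} half-line $[t_0,+\infty)$, with the constants growing only geometrically in the rate. Letting the rate tend to $+\infty$ at a fixed time then forces $z(t_0)=0$, and uniqueness for \eqref{eq:gKdV} gives $z\equiv 0$. The starting point is the equation for $z$: since $u$ and $\phib_{\an}$ are \emph{both} exact solutions of \eqref{eq:gKdV}, subtraction yields
\[
\partial_t z+\px\big[\px^2 z+p\phib_{\an}^{p-1}z+\omega(z)\big]=0,\qquad \omega(z)=(\phib_{\an}+z)^p-\phib_{\an}^p-p\phib_{\an}^{p-1}z=O(z^2),
\]
with \emph{no} source term — the key structural simplification compared with Section \ref{sec:preuveprinc}.

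I would then recycle, essentially verbatim, the three families of estimates already set up in the proof of Lemma \ref{th:e1}: (a) the modal bounds $\big|\dt\alpha_k^{\pm}(t)\mp e_k\alpha_k^{\pm}(t)\big|\leq Ce^{-\c t}\nh{z(t)}+C\nh{z(t)}^2$ for $\alpha_k^{\pm}(t)=\int z(t)Z_k^{\pm}(t)$ (as in \eqref{eq:alpharec}, the cross term being small because $\phib_{\an}$ is exponentially close to $R$ and $R$ is exponentially close to $R_k$ on the support of $Z_k^{\pm}$); (b) the monotonicity-plus-coercivity estimate $\nh{\zt(t)}^2\leq Ce^{-2\c t}\sup_{t'\geq t}\nh{z(t')}^2+C\|\alphav(t)\|^2$ for $\zt=z+\sum a_kR_{kx}$, with $C_1\nh{z}\leq\nh{\zt}+\sum|a_k|\leq C_2\nh{z}$ and $|a_k'(t)|\leq Ce^{-\c t}\nh{z(t)}+C\nh{\zt(t)}$. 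Here Corollary \ref{th:conclusion} also supplies $e^{e_kt}\alpha_k^-(t)\to 0$ for \emph{every} $k\in\unn$, so all the $\alpha_k^-$ can be controlled by integrating their equations towards $+\infty$ — in contrast with Section \ref{sec:construction}, no topological argument is needed.

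Now the bootstrap step: fix $t_0$ large (depending only on the absolute constants above and on the constant $C$ of Corollary \ref{th:conclusion}). If $\nh{z(t)}\leq De^{-\rho t}$ on $[t_0,+\infty)$ for some $\rho\geq 2e_N$, I claim $\nh{z(t)}\leq 2KDe^{-(\rho+\c)t}$ on $[t_0,+\infty)$ for a fixed $K$. Plugging $\nh{z}\leq De^{-\rho t}$ into (a) and integrating $e^{\mp e_kt}\alpha_k^{\pm}(t)$ from $t$ to $+\infty$ (using $\alpha_k^+(t)\to 0$ and $e^{e_kt}\alpha_k^-(t)\to 0$) gives $\|\alphav(t)\|\leq C'De^{-(\rho+\c)t}$; then (b) gives $\nh{\zt(t)}\leq C''De^{-(\rho+\c)t}$, integrating $a_k'$ gives $|a_k(t)|\leq C''De^{-(\rho+\c)t}$, hence $\nh{z(t)}\leq 2KDe^{-(\rho+\c)t}$. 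The only delicate point is that $\omega(z)$ a priori contributes a constant $D^2$; one removes this by writing $\nh{z(t)}^2=\nh{z(t)}\cdot\nh{z(t)}\leq De^{-\rho t}\cdot Ce^{-2e_Nt}$, using the \emph{fixed} a priori bound of Corollary \ref{th:conclusion} for the second factor, so that the nonlinear contribution is linear in $D$ and decays at rate $\rho+2e_N\geq\rho+\c$. Iterating $n$ times, $\nh{z(t)}\leq(2K)^nCe^{-(2e_N+n\c)t}=Ce^{-2e_Nt}\,(2Ke^{-\c t})^n$ on $[t_0,+\infty)$; enlarging $t_0$ so that also $2Ke^{-\c t_0}<1$ and letting $n\to+\infty$ forces $\nh{z(t)}=0$ for all $t\geq t_0$. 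Finally $z(t_0)=0$ means $u$ and $\phib_{\an}$ coincide at $t_0$, so by uniqueness for \eqref{eq:gKdV} — forward from $t_0$, and backward after the change $t\mapsto -t,\ x\mapsto -x$, which preserves \eqref{eq:gKdV} — we conclude $z\equiv 0$, which finishes the proof of the proposition and of Theorem \ref{th:main}.

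The main obstacle is precisely this bookkeeping of constants: one must check that each bootstrap step loses at most a fixed multiplicative factor (so the decay rate, growing like $n\c$, outpaces the constant $(2K)^n$ at any fixed time $t>\c^{-1}\log(2K)$), which relies on always keeping one spare copy of the slow a priori rate $e^{-2e_Nt}$ to absorb the quadratic nonlinearity, and on the fact that the half-line $[t_0,+\infty)$ on which the estimates hold does not shrink along the iteration (the thresholds in (a), (b) and in the absorption of the $e^{-2\c t}$ factors all depend only on the fixed constants). Everything else is a routine transcription of the arguments of Section \ref{sec:classification} and of \cite[Section~4]{martel:Nsoliton}.
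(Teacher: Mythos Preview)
Your proof is correct and follows essentially the same approach as the paper's: the same modal, energy-monotonicity and coercivity estimates are combined to show that the decay rate of $z$ self-improves by $\c$ at each step with only a fixed multiplicative loss in the constant, and the nonlinearity is handled by spending one copy of the a~priori rate from Corollary~\ref{th:conclusion}. The paper packages this slightly more compactly by introducing $\theta(t)=\sup_{t'\geq t}e^{e_Nt'}\nh{z(t')}$ and deriving directly $\nh{z(t)}\leq C^*e^{-(e_N+\c)t}\theta(t)$, hence $\theta(t)\leq C^*e^{-\c t_0}\theta(t)\leq\frac12\theta(t)$ for $t_0$ large --- this is exactly the closed form (contraction) of your explicit iteration.
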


\begin{proof}
We start from the conclusion of Corollary \ref{th:conclusion}, we set \[ \theta(t)=\sup_{t'\geq t} e^{e_Nt'}\nh{z(t')}, \] well defined and decreasing, and we prove that $\theta=0$. Indeed, with obvious notation, we still have the following estimates, for all $t\geq t_0$, \[ \begin{cases} \forall k\in\unn,\quad \left| \dt \alpha_k^{\pm}(t) \mp e_k\alpha_k^{\pm}(t)\right| \leq Ce^{-\c t}\nh{z(t)},\\ \forall k\in\unn,\quad |a'_k(t)|\leq Ce^{-\c t}\nh{z(t)} +C\nh{\zt(t)},\\ \nh{z(t)}\leq C\nh{\zt(t)} +C\sum_{k=1}^N |a_k(t)|. \end{cases} \] Moreover, if we define $H_0$ as in \cite{martel:Nsoliton} by \[ H_0(t) = \int \left\{ \Big( z_x^2(t,x)-F_0(t,z(t,x))\Big)h(t,x)+z^2(t,x)\right\} dx, \] where \[ F_0(t,z) = 2\left[ \frac{{(\phib_{\an}(t)+z)}^{p+1}}{p+1}-\frac{\phib_{\an}^{p+1}(t)}{p+1} -\phib_{\an}^p(t)z \right] \] and $h$ is defined in Section \ref{sec:monotonicity}, we also have $\frac{dH_0}{dt}(t)\geq -Ce^{-2\c t}\nh{z(t)}^2$. Now, we want to prove that $\theta(t)=0$, for $t\geq t_0$ with $t_0$ large enough. Let $t\geq t_0$.

First, we have for all $k\in\unn$, $\left| \dt \alpha_k^{\pm}(t) \mp e_k\alpha_k^{\pm}(t)\right| \leq Ce^{-\c t}e^{-e_Nt}\theta(t)$, and thus, for all $s\geq t$, \[ \left| \dt \alpha_k^{\pm}(s) \mp e_k\alpha_k^{\pm}(s)\right| \leq Ce^{-(e_N+\c)s}\theta(t). \] Hence, we have $|{(e^{-e_ks}\alpha_k^+(s))}'|\leq Ce^{-(e_N+e_k+\c)s} \theta(t)$, and so by integration on $[t,+\infty)$, \[ |\alpha_k^+(t)|\leq Ce^{-(e_N+\c)t}\theta(t). \] Similarly, we have $|{(e^{e_ks}\alpha_k^-(s))}'|\leq Ce^{-(e_N-e_k+\c)s} \theta(t)$, and since $e_N-e_k+\c\geq \c>0$ and $e^{e_kt}\alpha_k^-(t)\to 0$ as $t\to +\infty$, we also get by integration on $[t,+\infty)$, \[ |\alpha_k^-(t)|\leq Ce^{-(e_N+\c)t}\theta(t). \] We thus have $\|\alphav(t)\|^2\leq Ce^{-2(e_N+\c)t}\theta^2(t)$. But we also have, for $s\geq t$, \[ \frac{dH_0}{dt}(s) \geq -Ce^{-2\c s}\nh{z(s)}^2 = -Ce^{-2(e_N+\c)s}{(e^{e_Ns}\nh{z(s)})}^2 \geq -Ce^{-2(e_N+\c)s}\theta^2(t), \] and so by integration on $[t,+\infty)$, $H_0(t)\leq Ce^{-2(e_N+\c)t}\theta^2(t)$. As in the proof of Lemma \ref{th:e1}, we deduce that \[ \nh{\zt(t)}^2 \leq Ce^{-2(e_N+\c)t}\theta^2(t) +C\|\alphav(t)\|^2 \leq Ce^{-2(e_N+\c)t}\theta^2(t), \] and so $\nh{\zt(t)}\leq Ce^{-(e_N+\c)t}\theta(t)$. But, for all $k\in\unn$ and all $s\geq t$, we have \[ |a'_k(s)|\leq Ce^{-\c s}\nh{z(s)}+C\nh{\zt(s)} \leq Ce^{-(e_N+\c)s}\theta(s) \leq Ce^{-(e_N+\c)s}\theta(t), \] and so by integration on $[t,+\infty)$, $|a_k(t)|\leq Ce^{-(e_N+\c)t}\theta(t)$.

Finally, we have shown that there exists $C^*>0$ such that, for all $t\geq t_0$, $\nh{z(t)}\leq C^* e^{-(e_N+\c)t}\theta(t)$. Now fix $t\geq t_0$. We have, for all $t'\geq t$, \[ e^{e_Nt'}\nh{z(t')}\leq C^* e^{-\c t'}\theta(t')\leq C^*e^{-\c t_0}\theta(t), \] and thus $\theta(t)\leq C^*e^{-\c t_0}\theta(t)$. Choosing $t_0$ large enough so that $C^* e^{-\c t_0}\leq \frac{1}{2}$, we obtain $\theta(t)\leq \frac{1}{2}\theta(t)$, so $\theta(t)\leq 0$, and so finally $\theta(t)=0$, as expected.
\end{proof}

\appendix

\section{Appendix} \label{appendix}

\begin{proof}[Proof of Lemma \ref{th:wcf}]
The scheme of the proof is quite similar to the proof of \cite[Theorem 5]{martel:bo}, and uses moreover some arguments developed in \cite[section 3.4]{martel:Nsoliton}. Let $T^* = T^*(\nht{z_0})>0$ be the maximum time of existence of the solution $z(t)$ associated to $z_0$. We distinguish two cases, whether $T<T^*$ or not, and we show that this last case is in fact impossible.

\textbf{First case.} Suppose that $T<T^*$, and let us show that $z_n(T)\cvf z(T)$ in $H^1$. Since $C_0^{\infty}$ is dense in $H^{-1}$ and $\nh{z_n(T)-z(T)}\leq \nh{z_n(T)}+\nh{z(T)} \leq K'$, it is enough to show that $z_n(T)\to z(T)$ in $\mathcal{D}'(\R)$ as $n\to+\infty$. So let $g\in C_0^{\infty}(\R)$ and $\eps>0$, and let us show the lemma in three steps, using a $H^3$ regularization.

\textit{Step 1.} For $N\gg 1$ to fix later, we define $z_{0,n}^N$ and $z_0^N$ by \[ \left\{ \begin{aligned} \widehat{z_{0,n}^N}(\xi) &= \mathbbm{1}_{[-N,N]}(\xi)\widehat{z_{0,n}}(\xi), \\ \widehat{z_0^N}(\xi) &= \mathbbm{1}_{[-N,N]}(\xi)\widehat{z_0}(\xi). \end{aligned} \right. \] In particular, $z_{0,n}^N$ and $z_0^N$ belong to $H^3$, and $z_{0,n}^N\to z_0^N$ in $\mathcal{D}'(\R)$ as $n\to+\infty$, since Fourier transform is continuous in $\mathcal{D}'(\R)$.  Moreover, since $(z_{0,n})$ is uniformly bounded in $H^1$ by Banach-Steinhaus' theorem, we have ${\|z_{0,n}^N\|}_{H^3} \leq C(N)\nh{z_{0,n}}\leq C(N)$, and \begin{align*} \nht{z_{0,n}^N-z_{0,n}}^2 &= \int_{|\xi|\geq N} \Puiss{1+\xi^2}{3/4} \carre{|\widehat{z_{0,n}}(\xi)|}\,d\xi \leq 2^{3/4} \int_{|\xi|\geq N} \puiss{|\xi|}{3/2} \cdot \carre{|\widehat{z_{0,n}}(\xi)|}\,d\xi\\ &\leq \frac{2^{3/4}}{\sqrt N} \int_{|\xi|\geq N} \xi^2 \carre{|\widehat{z_{0,n}}(\xi)|}\,d\xi \leq \frac{2^{3/4}}{\sqrt N}\nh{z_{0,n}}^2 \leq \frac{C}{\sqrt N}, \end{align*} so $z_{0,n}^N \to z_{0,n}$ as $N\to+\infty$ in $\htq$ uniformly in $n$. If we call $z_n^N(t)$ the solution corresponding to initial data $z_{0,n}^N$, and since $\nht{z_n(t)}\leq \nh{z_n(t)}\leq K$, we deduce that \[ \supt \nht{z_n^N(t)-z_n(t)} \leq C\nht{z_{0,n}^N-z_{0,n}} \] for $N$ large enough, by applying \cite[Corollary 2.18]{kpv} with $s=\frac{3}{4}>\frac{p-5}{2(p-1)}$ and $T=T_K = T(\nht{z_n(t)})$. As a consequence, we have \begin{align*} \supt \nht{z_n^N(t)} &\leq \supt \nht{z_n(t)}+C\nht{z_{0,n}^N} +C\nht{z_{0,n}}\\ &\leq \supt \nh{z_n(t)}+2C\nh{z_{0,n}} \leq C. \end{align*} Similarly, since $\supt \nh{z(t)} \leq K'$ by hypothesis, we also obtain, for $N$ large enough, \[ \supt \nht{z^N(t)-z(t)} \leq C'\nht{z_0^N-z_0}, \] where $z^N(t)$ is the solution corresponding to initial data $z_0^N$. Notice that $C$ and $C'$ are independent of $n$, and that by propagation of the regularity, we have $z_n^N(t),z^N(t)\in H^3$ for all $t\in [0,T]$. Finally, we have by the Cauchy-Schwarz inequality \begin{multline*} \left| \int (z_n(T)-z(T))g - \int (z_n^N(T)-z^N(T))g \right| \leq \left| \int (z_n(T)-z_n^N(T))g\right| + \left| \int (z(T)-z^N(T))g\right|\\ \leq (\nld{z_n(T)-z_n^N(T)} + \nld{z(T)-z^N(T)})\nld{g} \leq \frac{C}{\sqrt[4] N} \leq \frac{\eps}{2} \end{multline*} for $N$ large enough, and we now fix it to this value.

\textit{Step 2.} Now that $N$ is fixed, we forget it and the situation amounts in: $z_n(t),z(t)\in H^3$ for all $t\in [0,T]$, $\supt \nht{z_n(t)}\leq C$, ${\|z_{0,n}\|}_{H^3} \leq C'$ (with $C$ and $C'$ independent of $n$) and $z_{0,n}\to z_0$ in $\mathcal{D}'(\R)$ as $n\to+\infty$. The aim of this step is to show consecutively that $z_n(t)$ is uniformly bounded in $H^1$, $H^2$ and $H^3$, and finally $z_n$ is uniformly bounded in $H^1([0,T]\times\R)$.

Since $\supt \nht{z_n(t)}\leq C$ and $\htq(\R) \hookrightarrow L^{\infty}(\R)$ continuously, then we have \[ \supt \nli{z_n(t)}\leq C\quad \m{and}\quad \supt \nld{z_n(t)}\leq C. \] But energy conservation gives, for all $t\in [0,T]$, \[ \frac{1}{2}\int \Carre{\partial_xz_n(t)} -\frac{1}{p+1}\int \puiss{z_n(t)}{p+1} = \frac{1}{2}\int \Carre{\partial_xz_{0,n}} -\frac{1}{p+1}\int z_{0,n}^{p+1}. \] We deduce that: \[ \left|\int \Carre{\partial_x z_n(t)}\right| \leq C\nli{z_n(t)}^{p-1}\nld{z_n(t)}^2 + C\nh{z_{0,n}}^2 +C\nh{z_{0,n}}^{p+1} \leq C, \] and so $\supt \nh{z_n(t)}\leq C$.

To estimate ${\|z_n(t)\|}_{H^2}$, we use the ``modified energy'' as in \cite[Section 3.4]{martel:Nsoliton} (see also \cite{kato}). If we denote $z_n$ by $z$ for a short moment, and if we define $G_2(t)= \int \left( z_{xx}^2(t) -\frac{5p}{3}z_x^2(t)z^{p-1}(t)\right)$ for $t\in [0,T]$, we have the identity \[ G'_2(t) = \frac{1}{12}p(p-1)(p-2)(p-3)\int z_x^5(t) z^{p-4}(t) +\frac{5}{3}p^2(p-1)\int z_x^3(t) z^{2p-3}(t). \] But Gagliardo-Nirenberg inequalities give, for all $k\geq 2$, \[ \int \puiss{|u_x|}{k} \leq C\Puiss{\int u_x^2}{\frac{k+2}{4}} \Puiss{\int u_{xx}^2}{\frac{k-2}{4}}, \] and since $\supt \nli{z(t)}\leq C$, we have \begin{align*} G'_2(t) &\leq C\nli{z(t)}^{p-4}\int \puiss{|z_x(t)|}{5} +C'\nli{z(t)}^{2p-3}\int \puiss{|z_x(t)|}{3}\\ &\leq C\Puiss{\int z_x^2(t)}{7/4} \Puiss{\int z_{xx}^2(t)}{3/4} +C'\Puiss{\int z_x^2(t)}{5/4} \Puiss{\int z_{xx}^2(t)}{1/4}\\ &\leq C\Puiss{\int z_{xx}^2(t)}{3/4} +C'\Puiss{\int z_{xx}^2(t)}{1/4}. \end{align*} Since $a\leq a^{4/3}+1$ and $a\leq a^4+1$ for $a\geq 0$, we deduce that for some $C,D>0$ (still independent of $n$), we have, for all $s\in [0,T]$, \[ G'_2(s)\leq C\left(\int z_{xx}^2(s) \right)+D. \] Now, for $t\in [0,T]$, we integrate between $0$ and $t$, and we obtain \[ G_2(t)-G_2(0)\leq C\int_0^t \nld{z_{xx}(s)}^2\,ds +Dt. \] Moreover, by definition of $G_2$, \begin{align*} \nld{z_{xx}(t)}^2 &\leq \frac{5p}{3}\left| \int z_x^2(t)z^{p-1}(t)\right| +\frac{5p}{3}\left| \int z_x^2(0)z^{p-1}(0)\right|\\ &\qquad + \nld{z_{xx}(0)}^2 +C\int_0^t \nld{z_{xx}(s)}^2\,ds +DT\\ &\leq C\nh{z(t)}^{p+1}+C\nh{z(0)}^{p+1}+{\|z(0)\|}_{H^2} +DT+C\int_0^t \nld{z_{xx}(s)}^2\,ds\\ &\leq B+C\int_0^t \nld{z_{xx}(s)}^2\,ds. \end{align*} Finally, we obtain by Grönwall's lemma that, for all $t\in [0,T]$, \[ \nld{z_{xx}(t)}^2 \leq Be^{Ct} \leq Be^{CT}. \] We can conclude that $\supt {\|z_n(t)\|}_{H^2} \leq C$ with $C>0$ independent of $n$.

For a uniform bound in $H^3$, we use the same arguments as for $H^2$. In fact, it is easier, since we have, by straightforward calculation (we forget again $n$ for a while), \begin{align*} \frac{d}{dt}\int z_{xxx}^2(t) &= -7p(p-1)\int z_{xxx}^2(t)z_x(t)z^{p-2}(t)\\ &\qquad +14p(p-1)(p-2)\int z_{xx}^3(t)z_x(t)z^{p-3}(t)\\ &\qquad +14p(p-1)(p-2)(p-3)\int z_{xx}^2(t)z_x^3(t)z^{p-4}(t)\\ &\qquad +2p(p-1)(p-2)(p-3)(p-4)\int z_{xx}(t)z_x^5(t)z^{p-5}(t). \end{align*} But we have now $\supt \nli{z_x(t)} \leq C\supt \nh{z_x(t)} \leq C\supt {\|z(t)\|}_{H^2} \leq C$, and still $\supt \nli{z(t)}\leq C$, so \[ \frac{d}{dt} \int z_{xxx}^2(t) \leq A\int z_{xxx}^2(t) +B\int \puiss{|z_{xx}(t)|}{3} +C\int z_{xx}^2(t) +D\int |z_{xx}(t)||z_x(t)|. \] Using a Gagliardo-Nirenberg inequality for the second term and the Cauchy-Schwarz one for the last term, we obtain \begin{align*} \frac{d}{dt} \int z_{xxx}^2(t) &\leq A\int z_{xxx}^2(t) +B'\Puiss{\int z_{xx}^2(t)}{5/4}\Puiss{\int z_{xxx}^2(t)}{1/4}\\ &\qquad +C{\|z(t)\|}_{H^2}^2 +D\nld{z_{xx}(t)}\nld{z_x(t)}\\ &\leq A\int z_{xxx}^2(t) +B''\int z_{xxx}^2(t)+B''+C'+D{\|z(t)\|}_{H^2}^2\\ &\leq A'\int z_{xxx}^2(t) +D'. \end{align*} Now, if we integrate this inequality between $0$ and $t\in [0,T]$, we get \begin{align*} \nld{z_{xxx}(t)}^2 &\leq \nld{z_{xxx}(0)}^2 +A'\int_0^t \nld{z_{xxx}(s)}^2\,ds +D't\\ &\leq {\|z(0)\|}_{H^3}^2 +A'\int_0^t \nld{z_{xxx}(s)}^2\,ds +D'T\\ &\leq A'\int_0^t \nld{z_{xxx}(s)}^2\,ds + D'', \end{align*} and we conclude again by Grönwall's lemma that $\nld{z_{xxx}(t)}^2 \leq D''e^{A't} \leq D''e^{A'T}$. Finally, we have the desired bound: $\supt {\|z_n(t)\|}_{H^3} \leq C$.

As $z_{nt}(t) = -z_{nxxx}(t)-pz_{nx}(t)z_n^{p-1}(t)$, then we have, for all $t\in [0,T]$, \[ \nld{z_{nt}(t)} \leq \nld{z_{nxxx}(t)} +p\nli{z_n(t)}^{p-1}\nld{z_{nx}} \leq {\|z_n(t)\|}_{H^3} +C\nh{z_n(t)}^p \leq C. \] We deduce that $(z_n)$ is uniformly bounded in $H^1([0,T]\times\R)$, thus there exists $\tilde z$ such that $z_n \cvf \tilde z$ weakly in $H^1([0,T]\times\R)$ (after passing to a subsequence), and in particular strongly on compacts in $L^2([0,T]\times\R)$. Moreover, since $\sup\nolimits_t {\|z_n(t)\|}_{H^3} \leq C$, we have $\sup\nolimits_t {\|\tilde{z}(t)\|}_{H^3} \leq C$.

\textit{Step 3.} This step is very similar to the first one of the proof of \cite[Theorem 5]{martel:bo}. We recall that we want to prove $\int (z_n(T)-z(T))g\to 0$ as $n\to+\infty$. Let $w_n=z_n-z$. The equation satisfied by $w_n$ is $w_{nt}+w_{nxxx}+ {(z_n^p-z^p)}_x=0$, and moreover \begin{align*} {(z_n^p-z^p)}_x &= pz_{nx}z_n^{p-1}-pz_xz^{p-1} = p[(z_{nx}-z_x)z_n^{p-1}+z_x(z_n^{p-1}-z^{p-1})]\\ &= p\left[w_{nx}z_n^{p-1}+z_x(z_n-z)\sum_{k=0}^{p-2} z_n^kz^{p-2-k} \right]. \end{align*} If we define $S(u,v)=\sum_{k=0}^{p-2} v^k u^{p-2-k}$, the equation satisfied by $w_n$ can be written \[ \begin{cases} w_{nt}+w_{nxxx}+pz_n^{p-1}w_{nx}+pz_x S(z,z_n)w_n=0,\\ w_n(0)=\psi_n = z_{0,n}-z_0. \end{cases} \]

Now consider $v(t)$ the solution of \[ \begin{cases} v_t +v_{xxx} +p{(\tilde{z}^{p-1}v)}_x +pz_xS(z,\tilde{z})v=0,\\ v(T)=g. \end{cases} \] First notice that $\sup\nolimits_t \nld{v}\leq C$ by an energy method. Indeed, we have by direct calculation \[ \frac{d}{dt} \int v^2 = -p\int v^2\left[ (p-1)\tilde{z}_x\tilde{z}^{p-2} +2z_xS(z,\tilde{z})\right]. \] But $\sup\nolimits_t \nli{\tilde{z}_x(t)} \leq \sup\nolimits_t {\|\tilde{z}(t)\|}_{H^2}\leq C$, and similarly $\sup\nolimits_t \nli{\tilde{z}(t)} \leq C$, $\sup\nolimits_t \nli{z_x(t)}\leq C$ and $\sup\nolimits_t \nli{S(z(t),\tilde{z}(t))}\leq C$, and so \[-\frac{d}{ds} \int v^2(s) \leq C\int v^2(s). \] By integration between $t\in [0,T]$ and $T$, we obtain \[\nld{v(t)}^2 -\nld{v(T)}^2 \leq C\int_t^T \nld{v(s)}^2\,ds, \] that is to say $\nld{v(t)}^2 \leq \nld{g}^2 +C\int_t^T \nld{v(s)}^2\,ds$. We conclude, by Grönwall's lemma, that \[ \nld{v(t)}^2 \leq \nld{g}^2 e^{C(T-t)} \leq \nld{g}^2 e^{CT} = K. \]

Now write \[ \int w_n(T,x)g(x)\,dx - \int \psi_n(x)v(0,x)\,dx = \int_0^T\int w_{nt}v +\int_0^T\int w_n v_t = \mathbf{I+II} \] with \[ \left\{ \begin{aligned} \mathbf{I} &= \int_0^T \int w_n\left[ v_{xxx}+p{(vz_n^{p-1})}_x -pz_xS(z,z_n)v\right],\\ \mathbf{II} &= \int_0^T \int w_n\left[ -v_{xxx}-p{(v\tilde{z}^{p-1})}_x +pz_xS(z,\tilde{z})v\right], \end{aligned} \right. \] and so \begin{align*} \mathbf{I+II} &= p\int_0^T \int w_n{[v(z_n^{p-1}-\tilde{z}^{p-1})]}_x + p\int_0^T \int w_nz_xv[S(z,\tilde{z})-S(z,z_n)]\\ &= -p\int_0^T \int w_{nx}v(z_n^{p-1}-\tilde{z}^{p-1}) -p\int_0^T\int w_n z_x v \sum_{k=1}^{p-2} z^{p-2-k}(z_n^k-\tilde{z}^k)\\ &= -p\int_0^T \int w_{nx}v(z_n-\tilde{z})S(\tilde{z},z_n) -p\int_0^T \int w_nz_xv(z_n-\tilde{z})S'(z,\tilde{z},z_n)\\ &= -p\int_0^T \int [w_{nx}S(\tilde{z},z_n)+w_nz_xS'(z,\tilde{z},z_n)]v(z_n-\tilde{z}), \end{align*} where $S(\tilde{z},z_n) = \sum_{k=0}^{p-2} \tilde{z}^{p-2-k}z_n^k$ and $S'(z,\tilde{z},z_n) = \sum_{k=1}^{p-2} \sum_{l=0}^{k-1} z^{p-2-k}\tilde{z}^{k-1-l}z_n^l$ both satisfy \[ \supt \nli{S(\tilde{z},z_n)} \leq C \quad\m{and}\quad \supt \nli{S'(z,\tilde{z},z_n)}\leq C. \] Since $\psi_n \cvf 0$ in $L^2$ and $v(0)\in L^2$, then, for $n$ large enough, $\left| \int \psi_n(x)v(0,x)\,dx\right|\leq \frac{\eps}{4}$. Therefore, it is enough to conclude to show that, for $n$ large enough, $|\mathbf{I}+\mathbf{II}|\leq \frac{\eps}{4}$. But \[ \sup\nolimits_t \nli{w_{nx}S(\tilde{z},z_n)+w_nz_xS'(z,\tilde{z},z_n)}\leq C, \] and $\sup\nolimits_t \nld{z_n-\tilde{z}} \leq C {\|z_n-\tilde{z}\|}_{H^1(]0,T[\times\R)} \leq C$, $\sup\nolimits_t \nld{v}\leq C$. Hence, there exists $R>0$ such that \[ \left| -p\int_0^T \int_{|x|>R} [w_{nx}S(\tilde{z},z_n)+w_nz_xS'(z,\tilde{z},z_n)] v(z_n-\tilde{z}) \right| \leq \frac{\eps}{8}. \] And finally, by Cauchy-Schwarz inequality, we have \begin{multline*} \left| -p\int_0^T \int_{|x|\leq R} [w_{nx}S(\tilde{z},z_n)+w_nz_xS'(z,\tilde{z},z_n)] v(z_n-\tilde{z}) \right| \leq C\int_0^T \int_{|x|\leq R} |z_n-\tilde{z}||v|\\ \leq C\Puiss{\int_0^T \int_{|x|\leq R} \carre{|z_n-\tilde{z}|}}{1/2} \Puiss{\int_0^T\int_{|x|\leq R} v^2}{1/2} \leq C\Puiss{\int_0^T \int_{|x|\leq R} \carre{|z_n-\tilde{z}|}}{1/2} \leq \frac{\eps}{8} \end{multline*} for $n$ large enough, which concludes the first case.

\textbf{Second case.} Suppose that $T^*\leq T$ and let us show that it implies a contradiction. Indeed, there exists $T'<T^*$ such that $\nht{z(T')} \geq 2K$ (where $K$ is the same constant as in the hypothesis of the lemma). But we can apply the first case with $T$ replaced by $T'$, so that $z_n(T')\cvf z(T')$ in $H^1$, and since $\nh{z_n(T')}\leq K$, we obtain by weak convergence $\nht{z(T')}\leq \nh{z(T')} \leq K$, and so the desired contradiction and the end of the proof of the lemma.
\end{proof}

\end{document}